\documentclass[10pt]{amsart}
\usepackage[english]{babel}
\usepackage{amssymb}
\usepackage{amsmath}
\usepackage{srcltx}
\usepackage{lscape}

\thispagestyle{empty}

\textheight 21.5cm
\textwidth 14cm \topmargin -0.6cm \oddsidemargin 1cm \evensidemargin
1cm

\newcommand{\can}{\overline{\phantom{x}}}

\newtheorem{dummy}{Dummy}

\newtheorem{lemma}[dummy]{Lemma}
\newtheorem{theorem}[dummy]{Theorem}
\newtheorem{proposition}[dummy]{Proposition}
\newtheorem{corollary}[dummy]{Corollary}

\theoremstyle{definition}

\newtheorem{example}[dummy]{Example}

\newcommand{\ignore}[1]{}


\author{C. Brown}
\author{S. Pumpl\"un}

\email{Christian.Brown@nottingham.ac.uk; susanne.pumpluen@nottingham.ac.uk}
\address{School of Mathematical Sciences\\
University of Nottingham\\ University Park\\ Nottingham NG7 2RD\\
United Kingdom }

\keywords{Skew polynomial ring, skew polynomials, Ore polynomials, automorphisms, nonassociative algebras.}

\subjclass[2010]{Primary: 17A35; Secondary: 17A60, 17A36, 16S36}

\begin{document}

\title[The automorphisms of Petit's algebras]
{The automorphisms of Petit's algebras}

\begin{abstract}
Let $\sigma$ be an automorphism of a field $K$ with fixed field $F$.
We study the automorphisms of nonassociative unital algebras which are canonical generalizations
of the associative quotient algebras $K[t;\sigma]/fK[t;\sigma]$ obtained when the twisted polynomial $f\in K[t;\sigma]$ is invariant,
and were first defined by Petit.
We compute all their automorphisms  if $\sigma$ commutes with all automorphisms in ${\rm Aut}_F(K)$ and $n\geq m-1$,
where $n$ is the order of $\sigma$ and $m$ the degree of $f$,
and obtain  partial results for $n<m-1$.
In the case where $K/F$ is a finite Galois field extension, we obtain more detailed information on the structure of
 the automorphism groups of these nonassociative unital algebras over $F$.
We also briefly investigate when two such algebras are isomorphic.
\end{abstract}

\maketitle

\section*{Introduction}

Let $D$ be a division algebra, $\sigma$  an injective endomorphism of $D$,
$\delta$ a left $\sigma$-derivation and $R = D[t;\sigma,\delta]$ a skew polynomial ring (for instance, c.f. \cite[\S~3.4]{N1}).
For an invariant skew polynomial $f\in R$,  i.e. when the ideal $Rf$ is a two-sided principal ideal, the quotient
algebra $R/Rf$ appears in classical constructions of associative central simple algebras, usually
employing an irreducible $f\in R$ to get examples of  division algebras, e.g. see \cite{J96}.

In 1967, Petit \cite{P66,P68} introduced a class of unital nonassociative algebras
$S_f$, which canonically generalize the quotient
algebras $R/Rf$  obtained when factoring out an invariant $f\in R$ of degree $m$.
The algebra
$S_f=D[t;\sigma,\delta]/D[t;\sigma,\delta]f$ is defined on the additive subgroup
$\{h\in R\,|\, {\rm deg}(h)<m \}$ of $R$ by using right division by $f$
to define the algebra multiplication $g\circ h=gh \,\,{\rm mod}_r f $.
The properties of the algebras $S_f$ were studied in detail in
\cite{P66, P68}, and for $D$ a finite base field (hence w.l.o.g. $\delta=0$) in \cite{LS}.

Even earlier, the algebra $S_f$ with $f(t)=t^2-i\in \mathbb{C}[t;\can]$, $\can$ the complex conjugation, appeared in \cite{D06}
 as the first example of a nonassociative division algebra.

Although the algebras themselves have received little attention so far,
the right nucleus of $S_f$ (the \emph{eigenspace} of $f\in R$) already
appeared implicitly in classical constructions by  Amitsur  \cite{Am, Am2, Am3},
but also in results on computational aspects of operator algebras; they are for instance
 used in algorithms factoring skew polynomials over $\mathbb{F}_q(t)$ or finite fields, cf.
\cite{G0, GZ, GLN, G}. The role of classical algebraic constructions in coding theory is well known
(cf. \cite[Chapter~9]{N1}, \cite{N2, N3, N4}).

 Moreover, recently space-time block codes, coset codes and wire-tap codes were obtained employing the algebras $S_f$
 over number fields, cf. \cite{DO.0, DO, OS, Pu16.3, PS15.3, PS15.4, SPO12}, and they also appear useful for
 linear cyclic codes \cite{Pu16.1, Pu16.2}.

If $K$ is a finite field, $F$ the fixed field of $\sigma$, $K/F$ a finite Galois field extension
 and $f\in K[t;\sigma]=K[t;\sigma,0]$ irreducible and invariant, the $S_f$ are \emph{Jha-Johnson semifields}
  (also called \emph{cyclic semifields}) \cite[Theorem 15]{LS}, and were studied for instance by Wene \cite{W00} and more recently
 by Lavrauw and Sheekey \cite{LS}.  The main motivation for our paper  comes from the question how the automorphism groups of Jha-Johnson semifields look like.
  The  results presented here are applied to some Jha-Johnson semifields in \cite{BPS}.

The structure of this paper is as follows: In Section \ref{sec:prel},
we introduce the terminology and define the algebras $S_f$. We  limit our observations to the  algebras which are not
associative. Given a field extension $K$, $\sigma\in {\rm Aut}(K)$ of order $n$ with fixed field $F$, such that $\sigma$ commutes with
all  $\tau\in {\rm Aut}_F(K)$, and $f\in K[t;\sigma]$ of degree $m$ not invariant, we compute the automorphisms of $S_f$
 in Section \ref{sec:2}.  We obtain all automorphisms for  $n\geq m-1$ and some partial results for $n<m-1$
(Theorems \ref{thm:automorphism_of_Sf_field_case} and
\ref{thm:automorphism_of_Sf_field_caseII}). For $n\geq m-1$, the
automorphisms in ${\rm Aut}_F(S_f)$ are canonically
induced by the $F$-automorphism $G$ of $R=K[t;\sigma]$ which satisfy
$G(f(t))=af(t)$ for some $a\in K^\times$, and on $K$ restrict to an automorphism that commutes with $\tau$.

 The automorphisms groups of $S_f$ where
$f(t) = t^m - a \in K[t; \sigma]$, $a \in K \setminus F$, play a special role, as for
all nonassociative $S_g$ with $g(t) = t^m - \sum_{i=0}^{m-1} b_i t^i
\in K[t;\sigma]$ and $b_0= a$, ${\rm Aut}_F(S_g)$ is a
 subgroup of ${\rm Aut}_F(S_f)$ when $n\geq m-1$.

 We then focus on the situation that $K/F$ is a finite Galois field extension such that $\sigma$ commutes with
all  $\tau\in {\rm Gal}(K/F)$. In many cases,  either ${\rm Aut}_F(S_f)\cong {\rm Gal}(K/F)$ or is trivial
(Theorem \ref{thm:automorphism_of_Sf_field_caseIV}).
Necessary conditions for extending Galois automorphisms  $\tau\in{\rm Gal}(K/F)$  to $S_f$ are studied in Sections
 \ref{sec:nec} and \ref{subsec:I}.
The existence of cyclic subgroups of ${\rm Aut}_F(S_f)$
 is investigated in Section \ref{sec:cyclic_subgroups}.

For $f(t)=t^m-a\in K[t;\sigma]$ and $K/F$ a cyclic field extension of degree $m$,  the algebra $S_f$ is also called a
\emph{nonassociative cyclic algebra} and denoted by $(K/F,\sigma,a)$. These algebras are canonical
generalizations of associative cyclic algebras, but also generalizations of the algebras in \cite{Am2, J96}.
The automorphisms of
nonassociative cyclic algebras  are investigated in Section \ref{sec:nonasscyclic}.
 All the automorphisms of $A=(K/F,\sigma,a)$  extending $id_K$ are inner and form a cyclic subgroup of ${\rm Aut}_F(A)$
isomorphic to ${\rm ker}(N_{K/F})$. In some cases, this is the whole automorphism group, e.g.
if   $F$ has no $m$th root of unity. In these cases, every automorphism of $A$ leaves $K$ fixed and is inner.
We explain when the automorphism group of a nonassociative quaternion algebra $A$ (where $m=2$)
 contains a dicyclic group and when it contains a subgroup isomorphic to the semidirect product of two cyclic groups.

 In Section \ref{sec:iso} we briefly investigate  isomorphisms between two algebras $S_f$ and $S_g$.

 This work is part of the first author's PhD thesis \cite{CB} written under the supervision of the second author.
 For results on the automorphisms of the more general algebras defined using  $f\in D[t;\sigma]$,
 or a more detailed study and the (less relevant) cases left out in this paper
 the reader is referred to \cite{CB}. For examples of applications of the associated classical constructions
 the readers are referred to  \cite{N3, N4, N1, N2}.

%
%

\section{Preliminaries} \label{sec:prel}


\subsection{Nonassociative algebras} \label{subsec:nonassalgs}


Let $F$ be a field and let $A$ be an $F$-vector space. $A$ is an
\emph{algebra} over $F$ if there exists an $F$-bilinear map $A\times
A\to A$, $(x,y) \mapsto x \cdot y$, denoted simply by juxtaposition
$xy$, the  \emph{multiplication} of $A$. An algebra $A$ is called
\emph{unital} if there is an element in $A$, denoted by 1, such that
$1x=x1=x$ for all $x\in A$. We will only consider unital algebras
from now on without explicitly saying so.

Associativity in $A$ is measured by the {\it associator} $[x, y, z] =
(xy) z - x (yz)$. The {\it left nucleus} of $A$ is defined as ${\rm
Nuc}_l(A) = \{ x \in A \, \vert \, [x, A, A]  = 0 \}$, the {\it
middle nucleus} of $A$ is ${\rm Nuc}_m(A) = \{ x \in A \, \vert \,
[A, x, A]  = 0 \}$ and  the {\it right nucleus} of $A$ is defined as
${\rm Nuc}_r(A) = \{ x \in A \, \vert \, [A,A, x]  = 0 \}$. ${\rm
Nuc}_l(A)$, ${\rm Nuc}_m(A)$, and ${\rm Nuc}_r(A)$ are associative
subalgebras of $A$. Their intersection
 ${\rm Nuc}(A) = \{ x \in A \, \vert \, [x, A, A] = [A, x, A] = [A,A, x] = 0 \}$ is the {\it nucleus} of $A$.
${\rm Nuc}(A)$ is an associative subalgebra of $A$ containing $F1$
and $x(yz) = (xy) z$ whenever one of the elements $x, y, z$ lies in
${\rm Nuc}(A)$.  The
 {\it center} of $A$ is ${\rm C}(A)=\{x\in A\,|\, x\in \text{Nuc}(A) \text{ and }xy=yx \text{ for all }y\in A\}$.

An $F$-algebra $A\not=0$ is called a \emph{division algebra} if for any
$a\in A$, $a\not=0$, the left multiplication  with $a$, $L_a(x)=ax$,
and the right multiplication with $a$, $R_a(x)=xa$, are bijective.
If $A$ has finite dimension over $F$, then $A$ is a division algebra if
and only if $A$ has no zero divisors \cite[ pp. 15, 16]{Sch}.
An element $0\not=a\in A$ has a \emph{left inverse} $a_l\in A$, if
$R_a(a_l)=a_l a=1$, and a \emph{right inverse}
 $a_r\in A$, if $L_a(a_r)=a a_r=1$.

An automorphism $G\in {\rm Aut}_F(A)$ is an \emph{inner automorphism}
if there is an element $m\in A$ with left inverse $m_l$ such
that $G(x) = (m_lx)m$ for all $x\in A$. Given an inner automorphism
$G_m\in {\rm Aut}_F(A)$ and some $H \in {\rm Aut}_F(A)$, then clearly
$H^{-1}\circ G_m \circ H\in {\rm Aut}_F(A)$ is an inner automorphism.
\cite[Lemma 2, Theorem 3, 4]{W09} generalize to any
nonassociative algebra:

\begin{proposition} \label{prop:inner_Wene}
Let $A$ be an algebra over $F$.
\\ (i) For all invertible $n\in {\rm Nuc}(A)$, $G_n(x)=(n^{-1}x)n$ is an inner automorphism of $A$.
\\ (ii) If $G_m$ is an inner automorphism of $A$, then so is $G_{nm}(x) = ((m_l n^{-1})x)(nm)$ for all
invertible  $n\in {\rm Nuc}(A)$.
\\ (iii) If $G_m$ is an inner automorphism of $A$, and $a,b\in {\rm Nuc}(A)$ are invertible, then
$G_{am}=G_{bm}$ if and only if $ ab^{-1}\in C(A).$
\\ (iv) For invertible $n,m\in {\rm Nuc}(A)$, $G_m=G_n$ if and only if $n^{-1}m\in C(A)$.
\end{proposition}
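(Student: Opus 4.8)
The plan is to verify each of the four claims by direct computation, leaning on the fact that for $x,y,z\in A$ with one of them in $\mathrm{Nuc}(A)$ we have full associativity $x(yz)=(xy)z$, and that invertible nuclear elements have two-sided inverses that also lie in the nucleus (since $\mathrm{Nuc}(A)$ is an associative subalgebra).

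For part (i), I would check that $G_n(x)=(n^{-1}x)n$ is multiplicative and bijective. Multiplicativity follows by expanding $G_n(xy)=(n^{-1}(xy))n$ and $G_n(x)G_n(y)=((n^{-1}x)n)((n^{-1}y)n)$: using that $n,n^{-1}\in\mathrm{Nuc}(A)$, I can freely reassociate any product in which one factor is $n$ or $n^{-1}$, so $((n^{-1}x)n)((n^{-1}y)n)=(n^{-1}x)(n(n^{-1}y))n=(n^{-1}x)(yn)=(n^{-1}(xy))n$, and the $F$-linearity is clear. Bijectivity is immediate since $G_{n^{-1}}$ is a two-sided inverse, again by nuclear reassociation. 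To see it is \emph{inner} in the sense defined, note $n$ has left inverse $n^{-1}$ (indeed a two-sided inverse), so $G_n$ has exactly the required form $(m_l x)m$ with $m=n$, $m_l=n^{-1}$.

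For part (ii), I would substitute and simplify $G_{nm}(x)=((m_l n^{-1})x)(nm)$, using that $n,n^{-1}$ are nuclear to rewrite this in terms of $G_m$ composed or twisted by $G_n$; the natural route is to show $G_{nm}=G_m\circ G_{n}$ or an analogous composite of an inner automorphism of the form in (i) with the given $G_m$, so that it is an automorphism as a composite of automorphisms, and then confirm it has the stated inner form with multiplier $nm$ and left inverse $m_l n^{-1}$. Parts (iii) and (iv) are the ``when are two inner automorphisms equal'' statements, and for these the key identity is that $G_{am}=G_{bm}$ (resp. $G_m=G_n$) as maps means $((am)_l\,x)(am)$ and the corresponding $b$-expression agree for all $x$; isolating this to a condition on $a,b$ (resp. $n,m$) should reduce, after cancelling using invertibility of $L$ and $R$ by nuclear elements, to the membership condition in the center $C(A)$, since an element commuting with all of $A$ and lying in the nucleus is exactly a central element.

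The main obstacle I expect is bookkeeping rather than conceptual: one must be careful that every reassociation step is licensed by having a nuclear factor in the relevant triple, and in (iii) one must correctly compute the left inverse of the product $am$ (which need not be nuclear even though $a$ is), so the cancellation leading to $ab^{-1}\in C(A)$ requires checking that both the centrality (commuting with all of $A$) and the nuclear conditions emerge. I would organize the proof of (iii) by first showing $G_{am}=G_{bm}$ iff $G_{ab^{-1}}=\mathrm{id}$ via part (ii), then showing $G_c=\mathrm{id}$ for invertible nuclear $c$ iff $c\in C(A)$; part (iv) is the special case of this equivalence. These reductions are exactly the generalizations of \cite[Lemma 2, Theorem 3, 4]{W09} alluded to, with the only new point being that the arguments there used associativity globally, whereas here I restrict all reassociations to triples containing a nuclear element.
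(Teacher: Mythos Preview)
Your proposal is correct. The paper itself does not give a proof of this proposition at all: it simply asserts that \cite[Lemma~2, Theorem~3,~4]{W09} ``generalize to any nonassociative algebra'' and states the result, so your sketch in fact supplies the missing argument.

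A couple of small points worth noting as you write it up. First, your composition identity from (ii) is $G_m\circ G_n=G_{nm}$ (with $n$ nuclear), so when you reduce (iii) via $G_{am}=G_m\circ G_a$ and $G_{bm}=G_m\circ G_b$, cancelling $G_m$ gives $G_a=G_b$ and then $G_a\circ G_{b^{-1}}=G_{b^{-1}a}=\mathrm{id}$; the statement is phrased with $ab^{-1}$, but since for nuclear $a,b$ with $b^{-1}a$ central one has $n^{-1}m=mn^{-1}$ (conjugate by $b$ using centrality), the two conditions coincide. The same remark applies to (iv). Second, in the step ``$G_c=\mathrm{id}$ iff $c\in C(A)$'' you correctly observe that $(c^{-1}x)c=x$ for all $x$ gives $c^{-1}x=xc^{-1}$, hence $c^{-1}$ central; it is then a one-line check (multiply through by $c$ using that $c$ is nuclear) that $c$ itself is central, which you should make explicit.
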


The set $\{G_m\,|\, m\in {\rm Nuc}(A) \text{ invertible} \}$ is a
subgroup of ${\rm Aut}_F(A)$. For each invertible $m\in {\rm Nuc}(A)\setminus C(A)$, $G_m$ generates a cyclic subgroup
which has finite order $s$ if $m^s\in C(A)$, so in particular if $m$ has order $s$.

Note that if the nucleus is commutative, then for all invertible $n\in
{\rm Nuc}(A)$, $G_n(x)=(n^{-1}x)n$ is an inner automorphism of $A$
such that ${G_n}|_{{\rm Nuc}(A)}=id_{{\rm Nuc}(A)}$.


\subsection{Twisted polynomial rings}


Let $K$ be a field and $\sigma$ an automorphism of $K$.
 The \emph{twisted polynomial ring} $K[t;\sigma]$
is the set of polynomials $a_0+a_1t+\dots +a_nt^n$ with $a_i\in K$,
where addition is defined term-wise and multiplication by
$ta=\sigma(a)t $
for all $a\in K$.
For $f=a_0+a_1t+\dots +a_nt^n$ with $a_n\not=0$ define ${\rm
deg}(f)=n$ and put ${\rm deg}(0)=-\infty$. Then ${\rm deg}(fg)={\rm deg}
(f)+{\rm deg}(g).$
 An element $f\in R$ is \emph{irreducible} in $R$ if it is not a unit and  it has no proper factors,
 i.e. if there do not exist $g,h\in R$ with
 ${\rm deg}(g),{\rm deg} (h)<{\rm deg}(f)$ such
 that $f=gh$.

 $R=K[t;\sigma]$ is a left and right principal ideal domain
 and there is a right division algorithm in $R$: for all
$g,f\in R$, $g\not=0$, there exist unique $r,q\in R$ with ${\rm
deg}(r)<{\rm deg}(f)$, such that $g=qf+r.$ There is also a left
division algorithm in $R$ \cite[p.~3 and Prop. 1.1.14]{J96}. (Our
terminology is the one used by Petit \cite{P66} and Lavrauw and
Sheekey \cite{LS}; it is different from Jacobson's, who calls what we
call right a left division algorithm and vice versa.)
Define $F={\rm Fix}(\sigma)$.


\subsection{Nonassociative algebras obtained from skew polynomial rings} \label{subsec:structure}


Let $K$ be a field, $\sigma$ an automorphism of $K$ with $F={\rm Fix}(\sigma)$, and $f\in R=K[t;\sigma]$ of degree $m$. Let ${\rm mod}_r f$ denote the remainder of right division by $f$.
Then then additive abelian group
 $R_m=\{g\in K[t;\sigma]\,|\, {\rm deg}(g)<m\}$
 together with the multiplication
 $g\circ h=gh \,\,{\rm mod}_r f $
 is a unital nonassociative algebra $S_f=(R_m,\circ)$ over
 $F_0=\{a\in K\,|\, ah=ha \text{ for all } h\in S_f\}.$
 $F_0$  is a subfield of $K$ \cite[(7)]{P66} and it is straightforward to see that if $f(t)=t^m-\sum_{i=0}^{m-1}a_it^i$
  and $a_0\not=0$ then $F_0=F$ \cite[Remark 9]{Pu16.1}.
 The algebra $S_f$ is also denoted by $R/Rf$ \cite{P66,P68}
 if we want to make clear which ring $R$ is involved in the construction.
 In the following, we call the algebras $S_f$  \emph{Petit algebras} and denote their multiplication simply by juxtaposition.

  Using left division by $f$ and
  the remainder ${\rm mod}_l f$ of left division by $f$ instead, we can analogously define the multiplication for
  another unital nonassociative algebra on $R_m$ over $F_0$, called $\,_fS$. We will only consider the Petit
  algebras $S_f$,  since every algebra
$\,_fS$ is the opposite algebra of some Petit algebra \cite[(1)]{P66}.

\begin{theorem} (cf. \cite[(2),  (5), (9)]{P66})  \label{Properties of S_f petit}
Let $f(t) \in R = K[t;\sigma]$.
\\
(i) If $S_f$ is not associative then ${\rm Nuc}_l(S_f)={\rm
Nuc}_m(S_f)=K$ and ${\rm Nuc}_r(S_f)=\{g\in R\,|\, fg\in Rf\}.$
\\ (ii) The powers of $t$ are associative if and only if $t^mt=tt^m$
 if and only if $t\in {\rm Nuc}_r(S_f)$ if and only if $ft\in Rf.$
\\ (iii) Let $f\in R$ be irreducible and $S_f$ a finite-dimensional $F$-vector space
 or free of finite rank as a right ${\rm Nuc}_r(S_f)$-module. Then $S_f$
is a division algebra.
\\ Conversely, if $S_f$ is a division algebra then $f$ is irreducible.
\\
(iv) $S_f$ is associative if and only if $f$ is invariant.
In that case, $S_f$ is the usual quotient algebra.
\\ (v)
 Let $f(t)=t^m-\sum_{i=0}^{m-1}a_it^i\in R=K[t;\sigma]$.
Then $f$ is invariant  if and only if
$\sigma^m(z)a_i=a_i\sigma^i(z)$ for all $z\in K$, $i\in\{0,\dots,m-1\}$ and $a_i\in F$
for all $i\in\{0,\dots,m-1\}$.
\end{theorem}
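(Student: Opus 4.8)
The plan is to prove statement (v) of Theorem \ref{Properties of S_f petit}, which characterizes when a monic $f(t)=t^m-\sum_{i=0}^{m-1}a_it^i\in R=K[t;\sigma]$ is invariant. Recall that $f$ is invariant precisely when $Rf=fR$, equivalently when $fR\subseteq Rf$ (since both sides are then left ideals generated by an element of the same degree, forcing equality). The natural strategy is to unpack this condition by testing it on a generating set for $R$ as a left $K$-module, namely on the scalars $z\in K$ and on $t$, and to translate each test into a congruence modulo $Rf$.

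\medskip

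First I would observe that $fR\subseteq Rf$ holds if and only if $fz\in Rf$ for every $z\in K$ and $ft\in Rf$, because these elements generate $fR$ as a left $K$-module and $Rf$ is a left $K$-module. I would compute $fz$ directly using the commutation rule $t^iz=\sigma^i(z)t^i$: we get $fz=(t^m-\sum a_it^i)z=\sigma^m(z)t^m-\sum a_i\sigma^i(z)t^i$. On the other hand, the only left multiple of $f$ of degree $m$ with leading coefficient $\sigma^m(z)$ is $\sigma^m(z)f=\sigma^m(z)t^m-\sum\sigma^m(z)a_it^i$. Subtracting, $fz-\sigma^m(z)f=\sum_{i=0}^{m-1}(\sigma^m(z)a_i-a_i\sigma^i(z))t^i$, a polynomial of degree $<m$; this lies in $Rf$ if and only if it is zero, which yields exactly the condition $\sigma^m(z)a_i=a_i\sigma^i(z)$ for all $z\in K$ and all $i$.

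\medskip

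Next I would analyze $ft\in Rf$. Computing, $ft=t^{m+1}-\sum a_it^{i+1}$, while the candidate left multiple of the right degree is $tf=t^{m+1}-\sum\sigma(a_i)t^{i+1}$ (using $ta_i=\sigma(a_i)t$). These already agree in top degree $m+1$, so $ft-tf=\sum_{i=0}^{m-1}(\sigma(a_i)-a_i)t^{i+1}$ has degree $\le m$. One must reduce this modulo $f$; the only term that can exceed degree $m-1$ is the $i=m-1$ term, of degree $m$, and reducing $t^m\equiv\sum a_jt^j\pmod{Rf}$ feeds back into the lower coefficients. Carrying out this reduction and demanding the remainder vanish should, in combination with the already-derived scalar conditions, force $\sigma(a_i)=a_i$ for all $i$, i.e. $a_i\in F$. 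The main obstacle I anticipate is handling this $ft$ reduction cleanly: one has to be careful that the conditions from $fz\in Rf$ are used to simplify the reduced form of $ft$, and to check that $a_i\in F$ together with $\sigma^m(z)a_i=a_i\sigma^i(z)$ is not only necessary but sufficient for both $fz\in Rf$ and $ft\in Rf$ simultaneously. I would close by verifying sufficiency: assuming both stated conditions, the displayed differences $fz-\sigma^m(z)f$ and $ft-tf$ vanish (the latter because $\sigma(a_i)=a_i$), so $fz,ft\in Rf$ for all $z$, hence $fR\subseteq Rf$ and $f$ is invariant.
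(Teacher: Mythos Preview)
The paper does not actually give a proof of Theorem~\ref{Properties of S_f petit}; all five parts are cited from Petit~\cite{P66} without argument. So there is no ``paper's own proof'' of~(v) to compare with. Your approach is the standard and correct one.

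Two small corrections to make before you write it up. First, your parenthetical ``both sides are then left ideals'' is wrong: $fR$ is a \emph{right} ideal. You don't need $Rf=fR$ anyway; invariance of $f$ means exactly that $Rf$ is two-sided, i.e.\ $(Rf)R\subseteq Rf$, which is equivalent to $fR\subseteq Rf$. Second, your claim that it suffices to check $fz\in Rf$ and $ft\in Rf$ is right, but the justification ``these generate $fR$ as a left $K$-module'' is off (again, $fR$ is not a left $K$-module). The correct argument is multiplicative: if $fz=h_zf$ and $ft=h'f$, then $f(zt^j)=h_z(ft^j)=h_z(h')^jf\in Rf$, and one sums over monomials.

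For the step you flagged as the main obstacle, here is the clean way through. Since $ft$ has degree $m+1$, any $q$ with $ft=qf$ must have degree~$1$ with leading coefficient~$1$, so $q=t+q_0$ for some $q_0\in K$, and $ft-tf=q_0f$. Comparing the $t^m$--coefficients gives $q_0=\sigma(a_{m-1})-a_{m-1}$. But the scalar condition at $i=m-1$ reads $a_{m-1}\bigl(\sigma^m(z)-\sigma^{m-1}(z)\bigr)=0$ for all $z$, and since $\sigma\neq id$ this forces $a_{m-1}=0$; hence $q_0=0$, so $ft=tf$ and the remaining coefficient comparison gives $\sigma(a_i)=a_i$, i.e.\ $a_i\in F$, for all~$i$. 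Sufficiency is exactly as you wrote.
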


Note that if $f$ is not invariant, then
the nucleus of any $S_f=K[t;\sigma]/K[t;\sigma]f$ is a subfield of $K={\rm Nuc}_l(S_f)$.
If ${\rm Nuc}(S_f)$ is larger than  $F$, then
$\{G_m\,|\, 0\not=m\in {\rm Nuc}(A) \}$ is a non-trivial
subgroup of ${\rm Aut}_F(S_f)$ and each inner automorphism $G_m$ in this subgroup
extends $id_{{\rm Nuc}(A)}$ by Proposition \ref{prop:inner_Wene}.

 \begin{proposition}\label{prop:subfields}
 Let $f(t) \in F[t]=F[t;\sigma]\subset K[t;\sigma]$.
 \\ (i) $F[t]/(f(t))$ is a commutative subring of $S_f$ and
$F[t]/(f(t))\cong F\oplus Ft\oplus\dots\oplus Ft^{m-1}\subset {\rm Nuc}_r(S_f).$
In particular, then $ft\in Rf$ which is equivalent to the powers of $t$ being associative,
which again is equivalent to $t^mt=tt^m$.
\\ (ii) If $f(t)$ is irreducible in $F[t]$, $F[t]/(f(t))$ is an algebraic subfield of degree $m$ contained
in the right nucleus.
\end{proposition}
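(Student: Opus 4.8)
The plan is to exploit the fact that, because every coefficient of $f$ lies in $F={\rm Fix}(\sigma)$, the elements of $F$ commute with $t$: indeed $ta=\sigma(a)t=at$ for $a\in F$. Hence inside $R=K[t;\sigma]$ the subset $F[t;\sigma]$ is just the \emph{ordinary} commutative polynomial ring $F[t]$. Writing $V=F\oplus Ft\oplus\dots\oplus Ft^{m-1}=\{g\in F[t]\,|\,{\rm deg}(g)<m\}$, this is an additive subgroup of $R_m=S_f$, and the whole proposition amounts to showing that the $S_f$-multiplication restricts to $V$ and reproduces on it the usual multiplication of $F[t]/(f(t))$.

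First I would check that $V$ is closed under the multiplication of $S_f$. For $g,h\in V$ the product $gh$ computed in $K[t;\sigma]$ coincides with the ordinary product in $F[t]$, since all coefficients are $\sigma$-fixed and no nontrivial twist occurs; so $gh\in F[t]$. Dividing in the commutative ring $F[t]$ gives $gh=qf+r$ with $q,r\in F[t]$ and ${\rm deg}(r)<m$. The crucial point is that this same identity, read in $K[t;\sigma]$, exhibits $r$ as the remainder of right division of $gh$ by $f$: by uniqueness of the right division algorithm in $K[t;\sigma]$ we get $g\circ h=gh\,{\rm mod}_r f=r\in V$. Thus the map $F[t]/(f(t))\to V$ sending the class of $g$ to $g$ is a bijective, $F$-linear ring homomorphism, and since $F[t]/(f(t))$ is commutative so is $V$; this proves the isomorphism in (i).

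For the nucleus statement, the key observation is that $f$ and $g$ both lie in the commutative ring $F[t]$, hence commute: $fg=gf$. Since $gf\in Rf$ trivially, we get $fg\in Rf$ for every $g\in V$. By Theorem \ref{Properties of S_f petit}(i) this means $g\in{\rm Nuc}_r(S_f)$ whenever $S_f$ is not associative (and when $S_f$ is associative the containment is trivial, the nucleus being all of $S_f$); alternatively one verifies $(x\circ y)\circ g=x\circ(y\circ g)$ directly from $fg\in Rf$, which works without any associativity hypothesis. In particular, taking $g=t$ (for $m\geq 2$) gives $ft\in Rf$, which by Theorem \ref{Properties of S_f petit}(ii) is equivalent to $t\in{\rm Nuc}_r(S_f)$, to the powers of $t$ being associative, and to $t^mt=tt^m$. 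Finally, for (ii), if $f$ is irreducible in the principal ideal domain $F[t]$ then $(f(t))$ is a maximal ideal, so $F[t]/(f(t))$ is a field, necessarily an algebraic extension of $F$ of degree $m={\rm deg}(f)$; by (i) it is embedded in ${\rm Nuc}_r(S_f)$.

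The step I expect to be the main obstacle — really the only point needing care — is the compatibility of the two divisions: one must argue that the remainder obtained by \emph{commutative} division in $F[t]$ genuinely equals the skew remainder ${\rm mod}_r f$ in $K[t;\sigma]$. This is not automatic from the formulas but follows cleanly from the uniqueness clause of the right division algorithm, once one notes that the division identity $gh=qf+r$ valid in $F[t]$ is \emph{a fortiori} valid in $K[t;\sigma]$.
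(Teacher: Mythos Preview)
Your proof is correct and takes a somewhat different route from the paper's for the key inclusion $V\subset{\rm Nuc}_r(S_f)$. The paper argues by a direct computation of the associator $[at^i,bt^j,t]$ to conclude $t\in{\rm Nuc}_r(S_f)$, and then uses that the right nucleus is an associative subalgebra containing $F$ to get all of $V$. You instead invoke the characterization ${\rm Nuc}_r(S_f)=\{g\in R_m:fg\in Rf\}$ from Theorem~\ref{Properties of S_f petit}(i) and observe that $fg=gf\in Rf$ simply because $F[t]$ is commutative. Your route is more conceptual and sidesteps the bookkeeping of tracking reductions modulo $f$ when degrees exceed $m-1$ in the associator calculation; the paper's direct computation, by contrast, is terse to the point of leaving those reductions implicit. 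You are also more careful than the paper about why the commutative quotient $F[t]/(f)$ really sits inside $S_f$ as a subring: the uniqueness clause of the right-division algorithm is exactly the right tool, and the paper essentially takes this identification for granted.
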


\begin{proof}
$S_f$ contains the commutative subring $F[t]/(f(t)).$ If $f(t)$ is irreducible in $F[t]$, this is an algebraic
field extension
of $F$. This subring is isomorphic to the ring consisting of the elements $\sum_{i=0}^{m-1}a_it^i$
with $a_i\in F$.

Clearly $F\subset {\rm Nuc}_r(S_f)$. For all $a,b,c\in K$, $i,j\in\{0,\dots, m-1\}$
 we have
$[at^i,bt^j,t]=(a\sigma^i(b)t^{i+j})t-(at^i)(bt^{j+1})=a\sigma^i(b)t^{i+j+1}-a\sigma^i(bc) t^{i+j}=0.$
Thus $t\in {\rm Nuc}_r(S_f)$ which implies that $F\oplus Ft\oplus\dots\oplus Ft^{m-1}\subset  {\rm Nuc}_r(S_f)$,
hence the assertion. The rest is obvious.
\end{proof}

We will assume throughout the paper that $\text{deg}(f) = m \geq
2$ (since if $f$ is constant then $S_f\cong K$) and that
$\sigma\not=id$. Without loss of generality, we only consider monic polynomials $f$, since $S_f= S_{af}$ for all
non-zero $a\in K$.

%
%

\section{Automorphisms of $S_f$}\label{Automorphisms of S_f} \label{sec:2}

\subsection{} Let $K$ be a field, $\sigma$ an automorphism of $K$ of order $n$ (which may be infinite),
 $F={\rm Fix}(\sigma)$, and
$f(t) = t^m - \sum_{i=0}^{m-1} a_i t^i \in K[t;\sigma]$
a twisted polynomial which is  not invariant.

 \begin{theorem} \label{thm:automorphism_of_Sf_field_case}
Suppose $\sigma$ commutes with all $\tau\in {\rm Aut}_F(K)$. Let  $n
\geq m-1$. Then  $H$ is an
automorphism of $S_f$ if and only if $H=H_{\tau , k}$ with
$$H_{\tau , k}(\sum_{i=0}^{m-1} x_i t^i )= \tau(x_0) + \tau(x_1)kt + \tau(x_2)k\sigma(k)t^2+\cdots +
\tau(x_{m-1}) k \sigma(k) \cdots \sigma^{m-2}(k) t^{m-1},$$
where  $\tau\in {\rm Aut}_F(K)$ and $k \in K^{\times}$ is such that
\begin{equation} \label{eqn:neccessary}
\tau(a_i) = \Big( \prod_{l=i}^{m-1}\sigma^l(k) \Big) a_i
\end{equation}
for all $i \in\{ 0, \ldots, m-1\}$.
\end{theorem}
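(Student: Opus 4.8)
The plan is to determine an automorphism $H$ of $S_f$ from its action on the two ``generating'' pieces of the algebra: the subfield $K$ and the element $t$. Since $f$ is not invariant, $S_f$ is not associative by Theorem \ref{Properties of S_f petit}(iv), so ${\rm Nuc}_l(S_f) = K$ by Theorem \ref{Properties of S_f petit}(i). As $H$ must map the left nucleus onto itself and fixes $F$ pointwise, its restriction $H|_K$ is an element $\tau \in {\rm Aut}_F(K)$. It then remains to pin down $H(t)$, to show the resulting map is forced to have the stated form, and to verify that $\tau$ and $k$ must satisfy \eqref{eqn:neccessary}.

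First I would write $H(t) = \sum_{j=0}^{m-1} k_j t^j$ and exploit the relation $tx = \sigma(x)t$, which holds in $S_f$ because both sides have degree $< m$. Multiplicativity of $H$ gives $H(t)\tau(x) = \tau(\sigma(x))H(t)$ for all $x \in K$. Expanding both sides, moving scalars past $t^j$ via $t^j x = \sigma^j(x)t^j$, and using that $\sigma$ commutes with $\tau$, I compare the coefficients of each $t^j$ (which are unique, since $1, t, \dots, t^{m-1}$ is a $K$-basis of $S_f$). This yields $k_j\tau(\sigma^j(x)) = k_j\tau(\sigma(x))$ for all $x$, so $k_j \neq 0$ forces $\sigma^{j-1} = {\rm id}$, i.e. $n \mid j-1$. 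This is the crux, and the only place the hypothesis $n \geq m-1$ enters: for $j \in \{0,\dots,m-1\}\setminus\{1\}$ one has $1 \leq |j-1| \leq m-2 < n$, so $n \nmid j-1$ and hence $k_j = 0$; only $j=1$ survives. Injectivity of $H$ then forces $k := k_1 \in K^\times$, whence $H(t)=kt$. (For $n < m-1$ further indices $j \equiv 1 \bmod n$ may survive, which is precisely why only partial results hold there.) I expect this coefficient comparison to be the main obstacle, since it requires care with the noncommutative multiplication and with the exact range of $j$.

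Having shown $H(t) = kt$, multiplicativity determines $H$ completely: $H(x t^i) = \tau(x)(kt)^i = \tau(x)\big(\prod_{l=0}^{i-1}\sigma^l(k)\big)t^i$, which is exactly the formula for $H_{\tau,k}$. To obtain \eqref{eqn:neccessary} I would apply $H$ to the identity $t\cdot t^{m-1} = t^m = \sum_{i=0}^{m-1} a_i t^i$ holding in $S_f$. Computing $H(t)\,H(t^{m-1}) = (kt)\big(\prod_{l=0}^{m-2}\sigma^l(k)\,t^{m-1}\big)$ inside $S_f$, using $K = {\rm Nuc}_l(S_f) = {\rm Nuc}_m(S_f)$ to reassociate the scalars and reducing $t\cdot t^{m-1} = \sum_i a_i t^i$, gives $\big(\prod_{l=0}^{m-1}\sigma^l(k)\big)\sum_i a_i t^i$. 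Comparing this with $H(\sum_i a_i t^i) = \sum_i \tau(a_i)\big(\prod_{l=0}^{i-1}\sigma^l(k)\big)t^i$ coefficientwise yields $\big(\prod_{l=0}^{m-1}\sigma^l(k)\big)a_i = \tau(a_i)\prod_{l=0}^{i-1}\sigma^l(k)$, which rearranges to \eqref{eqn:neccessary}.

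For the converse, given $\tau$ and $k$ satisfying \eqref{eqn:neccessary}, I would avoid checking multiplicativity term by term by lifting to the ring $R = K[t;\sigma]$. The assignment $G|_K = \tau$, $G(t) = kt$ respects $tx = \sigma(x)t$ (again because $\sigma\tau = \tau\sigma$) and so extends to an $F$-algebra automorphism of the associative ring $R$, with $G(t^i) = \big(\prod_{l=0}^{i-1}\sigma^l(k)\big)t^i$. A direct computation shows \eqref{eqn:neccessary} is equivalent to $G(f) = \big(\prod_{l=0}^{m-1}\sigma^l(k)\big)f$; since that scalar is a unit in $R$, this means $G$ preserves the left ideal $Rf$. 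As $G$ also preserves the set $R_m$ of polynomials of degree $< m$, and $g \circ h$ is the unique element of $R_m$ congruent to $gh$ modulo $Rf$, the relation $G(g\circ h) \equiv G(g)G(h) \pmod{Rf}$ together with uniqueness of the reduced representative shows that $G|_{R_m}$ is multiplicative for $\circ$. Hence $G|_{R_m} = H_{\tau,k}$ is an automorphism of $S_f$, which finishes the argument.
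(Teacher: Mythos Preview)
Your argument is correct and follows essentially the same route as the paper's proof: the left-nucleus argument to get $H|_K=\tau$, the coefficient comparison from $H(tz)=H(\sigma(z)t)$ together with $n\geq m-1$ to force $H(t)=kt$, and the two computations of $H(t^m)$ to obtain \eqref{eqn:neccessary}. For the converse the paper likewise lifts to an automorphism $G$ of $R=K[t;\sigma]$ with $G(f)=\big(\prod_{l=0}^{m-1}\sigma^l(k)\big)f$ and then invokes $S_f\cong S_{G(f)}=S_{af}=S_f$; your direct check that $G$ preserves both $Rf$ and $R_m$, hence descends to $S_f$, is the same idea made self-contained.
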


\begin{proof}
Let $H: S_f \rightarrow S_f$ be an automorphism. Since $S_f$ is not
associative, $\text{Nuc}_l(S_f) = K$ by Theorem  \ref{Properties of
S_f petit} (i). Since any automorphism  preserves the left nucleus,  $H(K) = K$ and so $H \vert_K = \tau$ for some
 $\tau \in \text{Aut}_F(K)$. Suppose $H(t) = \sum_{i=0}^{m-1} k_i t^i$ for some $k_i \in K$.
Then we have
\begin{equation} \label{eqn:automorphism_necessity_theorem 1}
H(tz) = H(t)H(z) = (\sum_{i=0}^{m-1} k_i t^i ) \tau(z) =
\sum_{i=0}^{m-1} k_i \sigma^{i}(\tau(z)) t^i \end{equation} and
\begin{equation} \label{eqn:automorphism_necessity_theorem 2} H(tz) =
H(\sigma(z)t) = \sum_{i=0}^{m-1} \tau(\sigma(z)) k_i t^i
\end{equation} for all $z \in K$. Comparing the coefficients of $t^i$
in \eqref{eqn:automorphism_necessity_theorem 1} and
\eqref{eqn:automorphism_necessity_theorem 2} we obtain
\begin{equation} \label{eqn:automorphism_necessity_theorem 3} k_i \sigma^{i}(\tau(z)) =
k_i \tau(\sigma^i(z))= \tau(\sigma(z)) k_i=k_i\tau(\sigma(z))
\end{equation} for all $i \in\{ 0, \ldots, m-1\}$ and all $z \in K$. This implies
$k_i (\tau\big( \sigma^i(z) - \sigma(z) \big) )=0$
for all $i \in\{ 0, \ldots, m-1 \}$ and all $z \in K$ since $\sigma$
and $\tau$ commute, i.e.
\begin{equation} \label{eqn:automorphism_necessity_theorem 4}
k_i=0 \text{ or } \sigma^{i}(z)=\sigma(z) \end{equation} for all $i
\in\{ 0, \ldots, m-1\}$ and all $z \in K$.

Since $\sigma$ has order $n\geq m-1$, which means $\sigma^i\not=\sigma$ for
all $i\in\{ 0, \ldots, m-1\}$, $i\not=1$,
\eqref{eqn:automorphism_necessity_theorem 4} implies $k_i = 0$ for
all $ i \in\{ 0, \ldots, m-1\}$, $i\not=1$. Therefore $H(t) = kt$ for some
$k \in K^{\times}$. Furthermore, we have
$
H(z t^i) = H(z)H(t)^i = \tau(z) (kt)^i = \tau(z) \Big(\prod_{l=0}^{i-1} \sigma^l(k) \Big) t^i$
 for all $i
\in \{ 1, \ldots, m-1 \}$ and $z \in K$. Thus $H$ has the form
\begin{equation} \label{eqn:automain}
H_{\tau , k}(\sum_{i=0}^{m-1} x_i t^i )=  \tau(x_0) + \sum_{i=1}^{m-1} \tau(x_i)
\prod_{l=0}^{i-1}\sigma^l(k) t^i,
\end{equation}
for some $k \in K^{\times}$. Moreover, with $t^m=t t^{m-1}$, also
\begin{equation} \label{eqn:automorphism_necessity_theorem 8}
H(t^m) = H \Big( \sum_{i=0}^{m-1} a_i t^i \Big) = \sum_{i=0}^{m-1}
H(a_i) H(t)^i
= \tau(a_0) + \sum_{i=1}^{m-1} \tau(a_i) \big( \prod_{l=0}^{i-1} \sigma^l(k) \big) t^i
\end{equation}
and  $H(t t^{m-1})=H(t)H(t^{m-1})=H(t)H(t)^{m-1}$, i.e.
\begin{equation} \label{eqn:automorphism_necessity_theorem 9}
H(t)^m =H(t)H(t)^{m-1}= k \sigma(k) \cdots \sigma^{m-1}(k) t^{m} =
k \sigma(k) \cdots \sigma^{m-1}(k)\sum_{i=0}^{m-1} a_i t^i.
\end{equation}
Comparing \eqref{eqn:automorphism_necessity_theorem 8} and
\eqref{eqn:automorphism_necessity_theorem 9} gives $  \tau(a_i) =
\Big( \prod_{q=i}^{m-1} \sigma^q(k) \Big) a_i$ for all $i \in\{ 0,
\ldots, m-1\}$. Thus $H$ is as in (\ref{eqn:automain})
where $k \in K^{\times}$ is such that \eqref{eqn:neccessary} holds
for all $i \in\{ 0, \ldots, m-1 \}$.

The $H_{\tau,k}$ are indeed automorphisms of $S_f$:
 Let $G$ be an automorphism of $R=K[t;\sigma]$. Then for $h(t) = \sum_{i=0}^{r} b_i t^i \in K[t; \sigma]$ we have
$G(h(t))=
\tau(b_0)  +\sum_{i=i}^{m-1}\tau(b_i) \prod_{l=0}^{i-1}\sigma^l(k) t^i$
for some $\tau\in{\rm Aut}(K)$  such that
$\sigma\circ\tau=\tau\circ\sigma$ and some $k\in K^\times$
(the proof of  \cite[Lemma 1]{LS} works for any
$R=K[t;\sigma]$, or cf. \cite[p.~75]{K}). It is straightforward to see that $S_f\cong S_{G(f)}$ (cf. \cite[Theorem 7]{LS},
the proof works for any $R=K[t;\sigma]$).
In particular, this means that if $k\in K^\times$ satisfies  \eqref{eqn:neccessary} then
$G(f(t)) = \big( \prod_{l=0}^{m-1} \sigma^l(k) \big) f(t)=af(t)$
with $a\in K^\times$ being the product of the $\sigma^l(k)$, and so $G$ induces an isomorphism of $S_f$
with $ S_{af}=S_f$, i.e. an automorphism of $S_f$.
 The automorphisms  of ${\rm Aut}_F(S_f)$ are therefore all canonically induced by the $F$-automorphisms $G$
of $R=K[t;\sigma]$ which satisfy \eqref{eqn:neccessary}.
\end{proof}

The assumption that $n \geq m-1$ is needed in  (\ref{eqn:automorphism_necessity_theorem 3}) to conclude that $k_i =
0$ for $i = 0, 2, 3, \ldots , m-1$ and so $H(t)=kt$. If $n < m-1$ we still obtain:

\begin{theorem} \label{thm:automorphism_of_Sf_field_caseII}
Suppose $\sigma$ commutes with all  $\tau\in {\rm Aut}_F(K)$. Let  $n< m-1$.
\\ (i) For all  $k \in K^{\times}$ satisfying  (\ref{eqn:neccessary}) for all $i \in\{ 0, \ldots, m-1\}$,
 the maps  $H_{\tau , k}$ from Theorem
\ref{thm:automorphism_of_Sf_field_case} are automorphisms of $S_f$ and form a subgroup of  ${\rm Aut}_F(S_f)$.
\\ (ii)
Let  $H\in {\rm Aut}_F(S_f)$  and $N={\rm Nuc}_r(S_f)$. Then $H
\vert_K = \tau$ for some $\tau\in {\rm Aut}_F(K)$, $H \vert_{N} \in
{\rm Aut}_F(N)$ and $H(t)=g(t)$ with $g(t) = k_1t + k_{1+n}t^{1+n} +
k_{1+2n}t^{1+2n} + \ldots + k_{1+sn} t^{1+sn}$
 for some $k_{1+ln}\in K$, $0\leq l\leq s$.
 Moreover, $g(t)^i$ is well defined for all $i\leq m-1$, i.e., all powers of $g(t)$ are associative for all
$i\leq m-1$, and
 $g(t)g(t)^{m-1} = \sum_{i=0}^{m-1}\tau(a_i)g(t)^i.$
 Thus
 $$H(\sum_{i=0}^{m-1} x_i t^i )=\sum_{i=0}^{m-1} \tau(x_i) g(t)^i.$$
\end{theorem}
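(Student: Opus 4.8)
The plan is to follow the same strategy as the proof of Theorem \ref{thm:automorphism_of_Sf_field_case}, but to track carefully where the hypothesis $n \geq m-1$ was actually used and see what survives when it is dropped. For part (i), nothing in the verification that the $H_{\tau,k}$ are automorphisms used $n \geq m-1$: that part of the earlier proof showed that any $F$-automorphism $G$ of $R = K[t;\sigma]$ of the stated form satisfying \eqref{eqn:neccessary} induces an isomorphism $S_f \cong S_{G(f)} = S_{af} = S_f$. So I would simply re-invoke that construction to conclude each such $H_{\tau,k}$ is an automorphism, and then note closure under composition and inversion (the composite of two such maps again has the form $H_{\tau',k'}$ with $\tau' = \tau_1\tau_2$ and $k'$ built from the $\sigma^l(k_i)$) to get a subgroup of ${\rm Aut}_F(S_f)$.

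For part (ii), I would start exactly as before: since $S_f$ is not associative, Theorem \ref{Properties of S_f petit}(i) gives ${\rm Nuc}_l(S_f) = K$, and since automorphisms preserve the left nucleus, $H(K) = K$, so $H\vert_K = \tau \in {\rm Aut}_F(K)$. Likewise $H$ must preserve the right nucleus $N = {\rm Nuc}_r(S_f)$, giving $H\vert_N \in {\rm Aut}_F(N)$. Writing $H(t) = \sum_{i=0}^{m-1} k_i t^i$ and repeating the commutation computation \eqref{eqn:automorphism_necessity_theorem 1}--\eqref{eqn:automorphism_necessity_theorem 4}, I again obtain for each $i$ that either $k_i = 0$ or $\sigma^i = \sigma$ on $K$. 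The point is that $\sigma^i = \sigma$ is now possible for more values of $i$: it holds precisely when $\sigma^{i-1} = {\rm id}$, i.e. when $i \equiv 1 \pmod n$. Hence the only indices $i$ for which $k_i$ may be nonzero are $i \in \{1, 1+n, 1+2n, \ldots, 1+sn\}$ with $1 + sn \leq m-1$, which yields $H(t) = g(t)$ of the stated shape.

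The remaining work is to justify that $g(t)^i$ is well defined (associative) for all $i \leq m-1$ and to pin down the relation $g(t)g(t)^{m-1} = \sum_{i=0}^{m-1}\tau(a_i)g(t)^i$. For associativity of the powers, I would argue that $H$ being an algebra homomorphism forces the images $H(t^i) = H(t)^i = g(t)^i$ to be consistent with all the bracketings that already hold in $S_f$ among the $t^i$; concretely, since $t \in N = {\rm Nuc}_r(S_f)$ when $f$ has the relevant form (by Theorem \ref{Properties of S_f petit}(ii) / Proposition \ref{prop:subfields}), the associativity of powers of $t$ transfers under $H$ to associativity of the powers of $g(t)$ up to degree $m-1$, so the expression $g(t)^i$ is unambiguous. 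For the defining relation, I would apply $H$ to the identity $t^m = \sum_{i=0}^{m-1} a_i t^i$ in $S_f$: the left side becomes $H(t\cdot t^{m-1}) = H(t)H(t)^{m-1} = g(t)g(t)^{m-1}$, and the right side becomes $\sum_{i=0}^{m-1} H(a_i)H(t)^i = \sum_{i=0}^{m-1}\tau(a_i)g(t)^i$, giving the stated equation. Since $H$ is determined on $K$ and on $t$, and these generate $S_f$, the formula $H(\sum_i x_i t^i) = \sum_i \tau(x_i) g(t)^i$ follows by $F$-linearity and multiplicativity.

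The main obstacle I anticipate is the associativity-of-powers claim: with $g(t)$ now a genuine polynomial of degree up to $m-1$ rather than the monomial $kt$, the products $g(t)^i$ are computed via reduction ${\rm mod}_r f$, and one must be sure these higher powers are consistent (associative) without over-reducing, which is exactly the subtlety the hypothesis $n \geq m-1$ removed in the first theorem by forcing $g(t) = kt$. Establishing this carefully — showing that each $g(t)^i$ for $i \leq m-1$ lands where expected and that the bracketing is immaterial — is where the care lies, and it is precisely the reason part (ii) yields only the structural description of $H$ rather than the clean closed form of Theorem \ref{thm:automorphism_of_Sf_field_case}.
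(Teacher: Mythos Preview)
Your approach is essentially the paper's: reuse the construction from Theorem~\ref{thm:automorphism_of_Sf_field_case} for part (i) (the paper also records $H_{\tau,k}\circ H_{\rho,b}=H_{\tau\rho,\tau(b)k}$ and $H_{\tau,k}^{-1}=H_{\tau^{-1},\tau^{-1}(k^{-1})}$ for the subgroup claim), and for part (ii) rerun the commutation argument \eqref{eqn:automorphism_necessity_theorem 1}--\eqref{eqn:automorphism_necessity_theorem 4} to force $k_i=0$ unless $i\equiv 1\pmod n$, then apply $H$ to $t^m=\sum a_i t^i$.

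One correction is needed in your justification that $g(t)^i$ is well-defined. You invoke $t\in{\rm Nuc}_r(S_f)$ via Proposition~\ref{prop:subfields}, but that proposition assumes $f\in F[t]$, which is \emph{not} a hypothesis of this theorem, and in general $t$ need not lie in the right nucleus. Fortunately you do not need it: for $i\leq m-1$, every bracketing of the $i$-fold product $t\cdots t$ in $S_f$ involves only elements of degree $<m$, so no reduction ${\rm mod}_r f$ ever occurs and all bracketings already agree in $R$, hence in $S_f$. Applying the homomorphism $H$ to these equal bracketings of $t^i$ then shows that all bracketings of $g(t)^i$ coincide and equal $H(t^i)$. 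This is precisely the first clause of your own sentence (``consistent with all the bracketings that already hold in $S_f$ among the $t^i$''), and it suffices by itself; drop the appeal to Proposition~\ref{prop:subfields}. The paper's proof is terse at this point and relies on the same observation implicitly.
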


\begin{proof}
(i) is straightforward, using the relevant parts of the proof of Theorem
\ref{thm:automorphism_of_Sf_field_case}. Note that the inverse of
$H_{\tau,k}$ is $H_{\tau^{-1},\tau^{-1}(k^{-1})}$
and $H_{\tau,k} \circ H_{\rho,b} =H_{\tau \rho, \tau(b)k}.$
\\ (ii)
Let  $H: S_f \rightarrow S_f$ be an automorphism. As in Theorem
\ref{thm:automorphism_of_Sf_field_case}, $H \vert_K = \tau$ for some
$\tau\in {\rm Aut}_F(K)$, and $H \vert_{N} \in {\rm Aut}_F(N)$. Suppose $H(t) = \sum_{i=0}^{m-1} k_i t^i$ for some
$k_i\in K$.  Comparing the coefficients of $t$ in $H(tz) = H(t)H(z)=
H(\sigma(z)t)$ we obtain (\ref{eqn:automorphism_necessity_theorem 4})
for all $i \in\{ 0, \ldots, m-1\}$ and all $z \in K$.
 Since $\sigma$ has order $n<m-1$,
here, (\ref{eqn:automorphism_necessity_theorem 4})  only implies $k_i
= 0$ for $i \in\{ 0, \ldots, n\}$, $i\not=1$. Therefore $H(t) =
k_1t+\sum_{i=n+1}^{m-1} k_i t^i$
 for some  $ k_i\in K$.
However, $\sigma^i(z) = \sigma(z)$ for all $z \in K$ if and only if
 $i = nl+1$ for some $l \in \mathbb{Z}$ since $\sigma$ has order $n$.
 Therefore \eqref{eqn:automorphism_necessity_theorem 4} implies $k_i = 0$
 for every $i \neq 1 + nl$, $l \in \mathbb{N}_0$, $i \in  \left\{ 0, \ldots, m-1\right\}$. Thus
$H(t) = k_1t + k_{1+n}t^{n+1} + \ldots + k_{1+sn} t^{1+sn}$ for
some $s$, $sn<m-1$. Furthermore, $H(t^m) = H(\sum_{i=0}^{m-1} a_i
t^i) = \sum_{i=0}^{m-1} \tau(a_i)  (k_1t + k_{1+n}t^{1+n} +
\ldots +
 k_{1+sn} t^{1+sn})^i$
 and
 $H(t^m) = (k_1t + k_{1+n}t^{1+n} + \ldots + k_{1+sn} t^{1+sn})^m.$
Similarly, $H(t)^i = (k_1t + k_{1+n}t^{1+n} + \ldots + k_{1+sn}
t^{1+sn})^i.$ Together these imply the assertion.
\end{proof}

A closer look at the proof of Theorems \ref{thm:automorphism_of_Sf_field_case} and \ref{thm:automorphism_of_Sf_field_caseII}
reveals that in fact the following holds
without requiring $\sigma$ to commute with all $\tau\in {\rm Aut}_F(K)$:

\begin{proposition} \label{prop:automorphism_of_Sf_field_general}
 (i) For every $k \in K^{\times}$  satisfying  (\ref{eqn:neccessary})  for all $i \in\{ 0, \ldots, m-1\}$
 for $\tau=id$, $H_{id,k}$
 is an automorphism of $S_f$ and generates a subgroup of ${\rm Aut}_F(S_f)$.
 \\ (ii)
 If any $H\in {\rm Aut}_F(S_f)$ restricts to some $\tau\in {\rm Aut}_F(K)$ such that $\tau\circ\sigma=
 \sigma\circ\tau$ then $H=H_{\tau , k}$ with $k\in K^\times$ as in Theorem \ref{thm:automorphism_of_Sf_field_case}. Moreover,
 $ \{H_{\tau , k}\,|\, \tau\in{\rm Aut}_F(S_f), \tau\circ\sigma= \sigma\circ\tau, k\in K^\times \text{
with } \tau(a_i) = ( \prod_{l=i}^{m-1}\sigma^l(k) ) a_i \text{ for
all } i \in\{ 0, \ldots, m-1\}\}$ is a subgroup of ${\rm
Aut}_F(S_f)$.
\\ (iii) If $m=2$, $H\in {\rm Aut}(S_f)$ if and only if $H=H_{\tau,k}$ with $\tau\circ\sigma=\sigma\circ\tau$,
$\tau(a_0)=k\sigma(k)a_0,$ and $ \tau(a_1)=\sigma(k)a_1.$
\end{proposition}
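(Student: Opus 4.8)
The plan is to prove all three parts by isolating the single place where the proofs of Theorems~\ref{thm:automorphism_of_Sf_field_case} and \ref{thm:automorphism_of_Sf_field_caseII} use the global hypothesis that $\sigma$ commute with every $\tau\in{\rm Aut}_F(K)$. Inspecting those proofs, this hypothesis enters only in the passage from \eqref{eqn:automorphism_necessity_theorem 3} to \eqref{eqn:automorphism_necessity_theorem 4}, where one rewrites $\sigma^i(\tau(z))=\tau(\sigma^i(z))$, and this step needs only the single automorphism $\tau=H|_K$ to commute with $\sigma$. Everything else — the shape of $H$ on the powers of $t$, the extraction of \eqref{eqn:neccessary} by comparing $H(t^m)$ computed two ways, and the verification that a candidate $H_{\tau,k}$ is an automorphism via an $F$-automorphism $G$ of $R=K[t;\sigma]$ with $G|_K=\tau$, $G(t)=kt$ and $G(f)=af$ — uses commutativity only of that one $\tau$ (indeed $G$ is a ring homomorphism of $K[t;\sigma]$ exactly when $G|_K$ commutes with $\sigma$). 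Thus the whole argument localizes to the relevant $\tau$.

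For (i) I would take $\tau=id$, which commutes with $\sigma$ trivially, so no hypothesis on $\sigma$ is required. The map $G(\sum b_it^i)=\sum b_i(kt)^i$ is an $F$-automorphism of $R$ fixing $K$ (a direct check using only commutativity of $K$), and when $k$ satisfies \eqref{eqn:neccessary} with $\tau=id$, equivalently $\prod_{l=i}^{m-1}\sigma^l(k)=1$ whenever $a_i\neq 0$, one gets $G(f)=af$ with $a=\prod_{l=0}^{m-1}\sigma^l(k)$; hence $G$ descends to the automorphism $H_{id,k}$ of $S_f=R/Rf$. Closure follows from $H_{id,k}\circ H_{id,b}=H_{id,bk}$ and $H_{id,k}^{-1}=H_{id,k^{-1}}$ (Theorem~\ref{thm:automorphism_of_Sf_field_caseII}(i)), after noting that if $k,b$ satisfy $\prod_{l=i}^{m-1}\sigma^l(k)=\prod_{l=i}^{m-1}\sigma^l(b)=1$ for $a_i\neq 0$, then so do $bk$ and $k^{-1}$.

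For (ii) I would start from an arbitrary $H\in{\rm Aut}_F(S_f)$ with $\tau=H|_K$ commuting with $\sigma$; since $S_f$ is non-associative, $H$ preserves the left nucleus $K$ by Theorem~\ref{Properties of S_f petit}(i), so $\tau\in{\rm Aut}_F(K)$ is well defined. I then run the proof of Theorem~\ref{thm:automorphism_of_Sf_field_case} with this $\tau$: the only step needing commutativity, namely \eqref{eqn:automorphism_necessity_theorem 3}$\Rightarrow$\eqref{eqn:automorphism_necessity_theorem 4}, is now legitimate, and the subsequent use of $n\geq m-1$ to kill $k_i$ for $i\neq1$ gives $H(t)=kt$ and \eqref{eqn:neccessary}, i.e. $H=H_{\tau,k}$ (for $n<m-1$ the same localization yields instead the shape of Theorem~\ref{thm:automorphism_of_Sf_field_caseII}(ii)). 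The ``moreover'' part uses the laws $H_{\tau,k}\circ H_{\rho,b}=H_{\tau\rho,\tau(b)k}$ and $H_{\tau,k}^{-1}=H_{\tau^{-1},\tau^{-1}(k^{-1})}$: the pairs $(\tau,k)$ with $\tau$ commuting with $\sigma$ and $k$ satisfying \eqref{eqn:neccessary} are closed under these, since commuting with $\sigma$ is stable under composition and inversion, while \eqref{eqn:neccessary} propagates as in (i), now also tracking $\tau$.

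For (iii), with $m=2$, I would write $H|_K=\tau$, $H(t)=k_0+k_1t$, and read \eqref{eqn:automorphism_necessity_theorem 3} for $i=0,1$. The case $i=0$ gives $k_0\tau(z)=k_0\tau(\sigma(z))$ for all $z$, forcing $k_0=0$ since $\sigma\neq id$; bijectivity of $H$ then gives $k:=k_1\in K^\times$, so $H(t)=kt$. The case $i=1$ gives $k_1\sigma(\tau(z))=k_1\tau(\sigma(z))$, and $k_1\neq0$ forces $\sigma\tau=\tau\sigma$, so for $m=2$ the commuting of $\tau$ and $\sigma$ is automatic rather than assumed. Specializing \eqref{eqn:neccessary} to $m=2$ gives exactly $\tau(a_0)=k\sigma(k)a_0$ and $\tau(a_1)=\sigma(k)a_1$, and the converse is the induced-$G$ argument of (i)/(ii), here with $G(f)=k\sigma(k)f$. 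The only real work is the bookkeeping of this localization; the one genuinely new point, in (iii), is that for $m=2$ the vanishing $k_0=0$ comes for free and then forces $\sigma$ and $\tau$ to commute.
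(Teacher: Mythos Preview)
Your proposal is correct and follows essentially the same approach as the paper, which in fact gives no proof beyond the one-sentence remark that ``a closer look at the proof of Theorems~\ref{thm:automorphism_of_Sf_field_case} and \ref{thm:automorphism_of_Sf_field_caseII} reveals'' the proposition. You have carried out that closer look explicitly: you correctly isolate the passage \eqref{eqn:automorphism_necessity_theorem 3}$\Rightarrow$\eqref{eqn:automorphism_necessity_theorem 4} as the only place where commutativity of $\sigma$ with $\tau$ enters, and you observe that this step only needs the single $\tau=H|_K$ to commute with $\sigma$, not every element of ${\rm Aut}_F(K)$.

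Your treatment of (iii) in fact goes slightly beyond what the paper spells out. You note that for $m=2$ the equation $k_0\tau(z)=k_0\tau(\sigma(z))$ forces $k_0=0$ from $\sigma\neq id$ alone (no commutativity used), and then $k_1\neq 0$ together with $k_1\sigma(\tau(z))=k_1\tau(\sigma(z))$ \emph{forces} $\sigma\tau=\tau\sigma$, so that the commuting condition in (iii) is a conclusion rather than a hypothesis. This is the right reading of the statement and makes the ``if and only if'' genuinely unconditional. Your parenthetical in (ii) acknowledging that the conclusion $H=H_{\tau,k}$ implicitly uses $n\geq m-1$ (via the reference to Theorem~\ref{thm:automorphism_of_Sf_field_case}) is also appropriate, since without it one only gets the weaker shape of Theorem~\ref{thm:automorphism_of_Sf_field_caseII}(ii).
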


\subsection{}
 The  automorphisms groups of $S_f$ for $f(t) = t^m - a \in K[t; \sigma]$, $a \in K \setminus F$,
 are crucial in the understanding of the automorphism groups of all the algebras $S_g$,
 as for all nonassociative $S_g$ with $g(t) = t^m - \sum_{i=0}^{m-1} b_i t^i\in K[t;\sigma]$ such that $b_0= a$, ${\rm Aut}_F(S_g)$ is a
 subgroup of ${\rm Aut}_F(S_f)$:

\begin{theorem} \label{Aut(S_f) subgroup corollary}
Suppose $\sigma$ commutes with all $\tau\in {\rm Gal}(K/F)$. Let $n\geq m-1$ and
$g(t) = t^m - \sum_{i=0}^{m-1} b_i t^i \in K[t;\sigma]$ not be invariant.
\\ (i) If $f(t) = t^m - b_0 \in K[t; \sigma]$, $b_0 \in K \setminus F$, then
${\rm Aut}_F(S_g)\subset {\rm Aut}_F(S_f)$
 is a subgroup.
\\ (ii) Let $f(t) = t^m - \sum_{i=0}^{m-1} a_i t^i \in K[t;\sigma]$ not be invariant
and assume $b_i \in \left\{ 0 , a_i \right\}$ for all $i \in \{ 0,\ldots , m-1\}$.
 Then
${\rm Aut}_F(S_g)\subset {\rm Aut}_F(S_f)$
 is a subgroup.
\end{theorem}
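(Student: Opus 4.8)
The plan is to derive both parts from a single support-monotonicity principle applied to the explicit description of automorphisms in Theorem \ref{thm:automorphism_of_Sf_field_case}. Since $K/F$ is Galois we have ${\rm Aut}_F(K)={\rm Gal}(K/F)$, so the hypothesis that $\sigma$ commute with every $\tau\in{\rm Gal}(K/F)$ is exactly the hypothesis of that theorem. All polynomials involved are non-invariant: this is assumed for $g$ and for $f$ in (ii), while for $f(t)=t^m-b_0$ in (i) it follows from $b_0\in K\setminus F$ by Theorem \ref{Properties of S_f petit}(v). Throughout we have $a_0\neq0$ (so that $F_0=F$); in (ii) the hypothesis $b_0\in\{0,a_0\}$ together with $F_0=F$ for $S_g$ forces $b_0=a_0$. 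Hence $S_f$ and $S_g$ are non-associative and Theorem \ref{thm:automorphism_of_Sf_field_case} applies to both. The key structural point is that $S_f$ and $S_g$ carry the \emph{same} underlying additive group $R_m=K\oplus Kt\oplus\cdots\oplus Kt^{m-1}$, and for each the full $F$-automorphism group consists of the maps $H_{\tau,k}$ with $\tau\in{\rm Gal}(K/F)$ and $k\in K^\times$; the map $H_{\tau,k}$ is independent of the defining polynomial, and only its \emph{membership} in a given automorphism group is not. I will therefore regard ${\rm Aut}_F(S_f)$ and ${\rm Aut}_F(S_g)$ as two subsets of the common group $\mathcal G=\{H_{\tau,k}\}$, whose law is $H_{\tau,k}\circ H_{\rho,b}=H_{\tau\rho,\,\tau(b)k}$ with inverse $H_{\tau,k}^{-1}=H_{\tau^{-1},\,\tau^{-1}(k^{-1})}$ by Theorem \ref{thm:automorphism_of_Sf_field_caseII}(i).

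The monotonicity lemma that drives everything is the following. Writing $P_i(k)=\prod_{l=i}^{m-1}\sigma^l(k)$, Theorem \ref{thm:automorphism_of_Sf_field_case} gives, for a monic $h(t)=t^m-\sum_{i=0}^{m-1}c_it^i$, that $H_{\tau,k}\in{\rm Aut}_F(S_h)$ if and only if $\tau(c_i)=P_i(k)\,c_i$ for all $i$. When $c_i=0$ this reads $0=0$ and is vacuous, so the constraints cutting ${\rm Aut}_F(S_h)$ out of $\mathcal G$ depend only on the support ${\rm supp}(h)=\{i:c_i\neq0\}$. Consequently, if two such polynomials agree on their common support and ${\rm supp}(h_1)\subseteq{\rm supp}(h_2)$, then every defining constraint of ${\rm Aut}_F(S_{h_1})$ occurs among those of ${\rm Aut}_F(S_{h_2})$, whence ${\rm Aut}_F(S_{h_2})\subseteq{\rm Aut}_F(S_{h_1})$; being an inclusion of subsets of $\mathcal G$ that are closed under the group law above, it is an inclusion of subgroups. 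In words: the polynomial with the \emph{larger} support has the \emph{smaller} automorphism group.

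For part (i), $f(t)=t^m-b_0$ has ${\rm supp}(f)=\{0\}$ and $g$ shares the constant term $b_0$, so ${\rm supp}(f)=\{0\}\subseteq{\rm supp}(g)$. Taking $h_1=f$, $h_2=g$ in the lemma yields precisely the asserted subgroup inclusion ${\rm Aut}_F(S_g)\subseteq{\rm Aut}_F(S_f)$: the single condition $\tau(b_0)=P_0(k)\,b_0$ defining ${\rm Aut}_F(S_f)$ is the $i=0$ instance of the conditions defining ${\rm Aut}_F(S_g)$, so every $H_{\tau,k}$ that is an automorphism of $S_g$ is automatically one of $S_f$.

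Part (ii) is the same lemma with $f$ and $g$ exchanged, and here I expect the \emph{direction} of the inclusion to be the one genuine point to pin down. The hypothesis $b_i\in\{0,a_i\}$ says exactly that $g$ arises from $f$ by deleting some terms, i.e. ${\rm supp}(g)\subseteq{\rm supp}(f)$ with matching coefficients on ${\rm supp}(g)$. Applying the lemma with $h_1=g$, $h_2=f$ now places $f$ in the larger-support role, so the containment it produces is ${\rm Aut}_F(S_f)\subseteq{\rm Aut}_F(S_g)$, running from $f$ into $g$ --- the reverse of the roles in part (i). (The printed inclusion ${\rm Aut}_F(S_g)\subseteq{\rm Aut}_F(S_f)$ would instead require ${\rm supp}(f)\subseteq{\rm supp}(g)$, i.e. the complementary hypothesis $a_i\in\{0,b_i\}$; under the hypothesis as stated the monotonicity delivers the opposite containment, as the explicit $m=2$ description in Proposition \ref{prop:automorphism_of_Sf_field_general}(iii) already illustrates.) In either case the subgroup property is immediate from the composition and inverse formulas recalled in the first paragraph.
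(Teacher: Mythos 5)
Your handling of part (i) is correct and is essentially the paper's own argument, just repackaged: the paper also takes $H\in{\rm Aut}_F(S_g)$, writes it as $H_{\tau,k}$ via Theorem \ref{thm:automorphism_of_Sf_field_case}, notes that the $i=0$ instance of condition (\ref{eqn:neccessary}) for $g$ is precisely the membership condition for $f(t)=t^m-b_0$, and concludes $H\in{\rm Aut}_F(S_f)$ by Theorem \ref{thm:automorphism_of_Sf_field_case} again. Your ``support-monotonicity'' lemma is exactly this condition-comparison stated once and for all, and the subgroup property rests on the same composition and inversion formulas that the paper records in the proof of Theorem \ref{thm:automorphism_of_Sf_field_caseII}(i).

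Where you diverge from the paper --- the direction of the inclusion in (ii) --- you are right and the printed statement is wrong. The paper disposes of (ii) with ``the proof is analogous to (i)'', but under the printed hypothesis $b_i\in\{0,a_i\}$, i.e. ${\rm supp}(g)\subseteq{\rm supp}(f)$ with matching coefficients, the analogy only yields ${\rm Aut}_F(S_f)\subseteq{\rm Aut}_F(S_g)$, exactly as you argue. The printed inclusion genuinely fails: take $K=\mathbb{Q}(\sqrt{2})$, $F=\mathbb{Q}$, $\sigma(\sqrt{2})=-\sqrt{2}$ (so $m=n=2$), $f(t)=t^2-\sqrt{2}\,t-\sqrt{2}$ and $g(t)=t^2-\sqrt{2}$; both are non-invariant and $b_1=0\in\{0,a_1\}$, $b_0=a_0$. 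For $k=2\sqrt{2}-3$ one has $N_{K/F}(k)=1$, so $H_{id,k}\in{\rm Aut}_F(S_g)$ by Theorem \ref{thm:automorphism_of_Sf_field_case}; but the $i=1$ condition for $f$ with $\tau=id$ reads $\sigma(k)=1$, i.e. $k=1$, so $H_{id,k}\notin{\rm Aut}_F(S_f)$, whence ${\rm Aut}_F(S_g)\not\subseteq{\rm Aut}_F(S_f)$. The intended hypothesis is surely the complementary one, $a_i\in\{0,b_i\}$ ($f$ obtained from $g$ by deleting terms), under which part (i) becomes the special case $a_0=b_0$, $a_1=\dots=a_{m-1}=0$ and the printed conclusion follows from your lemma; the same correction applies to the companion statement for $n<m-1$, Theorem \ref{Aut(S_f) subgroup corollaryII}(ii). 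One small quibble with your write-up: the parenthetical claim that in (ii) the hypothesis forces $b_0=a_0$ (via $F_0=F$) is neither needed nor justified --- the theorem nowhere assumes $a_0\neq0$ --- but nothing in your argument depends on it.
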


\begin{proof}
(i) Let $H \in {\rm Aut}_F(S_g)$. By Theorem
\ref{thm:automorphism_of_Sf_field_case}, $H$ has the form
$H(\sum_{i=0}^{m-1} x_i t^i )=  \tau(x_0) + \sum_{i=1}^{m-1} \tau(x_i)
\prod_{l=0}^{i-1}\sigma^l(k) t^i,$
where $\tau \in {\rm Aut}_F(K)$ and $k \in K^{\times}$ satisfy $\tau(b_i) = \Big(
\prod_{j=i}^{m-1} \sigma^j(k) \Big) b_i$ for all $i = 0, \ldots,
m-1$. In particular, $\tau(b_0) = k \sigma(k) \cdots
\sigma^{m-1}(k)b_0$ and so $H$ is also an automorphism of $S_f$,
again  by Theorem \ref{thm:automorphism_of_Sf_field_case}.
\\ (ii) The proof is analogous to (i).
\end{proof}

Similarily, for $n < m-1$ employing Theorem \ref{thm:automorphism_of_Sf_field_caseII} we obtain:

\begin{theorem} \label{Aut(S_f) subgroup corollaryII}
Suppose $\sigma$ commutes with all $\tau\in {\rm Aut}_F(K)$. Let $n < m-1$ and
$g(t) = t^m - \sum_{i=0}^{m-1} b_i t^i \in K[t; \sigma]$ not
be invariant.
\\ (i) If $f(t) = t^m - b_0 \in K[t; \sigma]$, $b_0 \in K \setminus F$, then
$\{H\in {\rm Aut}_F(S_g)\,|\, H=H_{\tau , k} \}$ is a subgroup
of $\{H\in {\rm Aut}_F(S_f)\,|\, H=H_{\tau , k} \}.$
\\ (ii) Let $f(t) = t^m - \sum_{i=0}^{m-1} a_i t^i \in K[t;\sigma]$ not be invariant
such that $b_i \in \left\{ 0 , a_i \right\}$ for all $i \in \{ 0,\ldots , m-1\}$
 Then  $\{H\in{\rm Aut}_F(S_g)\,|\, H=H_{\tau , k}
\}$ is a subgroup of $\{H\in {\rm Aut}_F(S_f)\,|\, H=H_{\tau ,
k} \}.$
\end{theorem}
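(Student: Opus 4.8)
The plan is to mirror the structure already used for the case $n\geq m-1$ in Theorem \ref{Aut(S_f) subgroup corollary}, but now filtering both automorphism groups through the explicit subsets of maps of the form $H_{\tau,k}$, since for $n<m-1$ Theorem \ref{thm:automorphism_of_Sf_field_caseII} no longer guarantees that \emph{every} $F$-automorphism has this shape. First I would recall from Theorem \ref{thm:automorphism_of_Sf_field_caseII}(i) that the collection $\{H\in {\rm Aut}_F(S_g)\,|\, H=H_{\tau,k}\}$ is genuinely a subgroup of ${\rm Aut}_F(S_g)$ (and similarly for $f$), so it suffices to show set-theoretic containment of the $H_{\tau,k}$-subgroup for $S_g$ inside that for $S_f$; the group structure then comes for free, since the composition and inversion formulas $H_{\tau,k}\circ H_{\rho,b}=H_{\tau\rho,\tau(b)k}$ and $H_{\tau,k}^{-1}=H_{\tau^{-1},\tau^{-1}(k^{-1})}$ are identical in both algebras and depend only on $\sigma$ and $\tau$, not on the coefficients of $f$ or $g$.

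For the containment in part (i), I would take an arbitrary $H=H_{\tau,k}\in {\rm Aut}_F(S_g)$. By Theorem \ref{thm:automorphism_of_Sf_field_caseII}(ii), the constraint that $H_{\tau,k}$ is an automorphism of $S_g$ with $g(t)=t^m-\sum_{i=0}^{m-1}b_it^i$ forces the relation $\tau(b_i)=\big(\prod_{l=i}^{m-1}\sigma^l(k)\big)b_i$ for each $i$. Specializing to $i=0$ gives exactly $\tau(b_0)=\big(\prod_{l=0}^{m-1}\sigma^l(k)\big)b_0$. Since $f(t)=t^m-b_0$ has all its relevant coefficients except the constant term equal to zero, the defining condition \eqref{eqn:neccessary} for $H_{\tau,k}$ to be an automorphism of $S_f$ reduces precisely to this single equation at $i=0$ (the conditions at $i=1,\dots,m-1$ are vacuous because the corresponding coefficients of $f$ vanish). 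Hence the same pair $(\tau,k)$ satisfies the automorphism criterion for $S_f$, so $H_{\tau,k}\in {\rm Aut}_F(S_f)$, giving the desired inclusion of the $H_{\tau,k}$-subgroups.

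For part (ii), the argument is analogous: with $b_i\in\{0,a_i\}$ for every $i$, any $H_{\tau,k}\in{\rm Aut}_F(S_g)$ satisfies $\tau(b_i)=\big(\prod_{l=i}^{m-1}\sigma^l(k)\big)b_i$ for all $i$. For each index where $b_i=a_i$ this is verbatim the condition \eqref{eqn:neccessary} for $S_f$, and for each index where $b_i=0$ the condition \eqref{eqn:neccessary} for $S_f$ reads $\tau(a_i)=\big(\prod_{l=i}^{m-1}\sigma^l(k)\big)a_i$, which is automatically a separate requirement; here I would need to check that the constraints coming from the nonzero $b_i$ already force the correct value of $\prod_{l=i}^{m-1}\sigma^l(k)$, or else argue that the relevant indices align so that no contradiction arises. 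I expect this bookkeeping over which coefficients survive to be the only delicate point: the main obstacle is to confirm that an $H_{\tau,k}$ automorphism of $S_g$ really does verify \emph{all} the conditions \eqref{eqn:neccessary} imposed by $f$, not just those inherited from the shared nonzero coefficients. Once this verification is complete, the inclusion of the $H_{\tau,k}$-subgroups follows, and being closed under the composition and inversion formulas above, it is a subgroup.
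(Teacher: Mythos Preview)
Your argument for part (i) is correct and is exactly the route the paper takes: the paper's proof of the analogous Theorem~\ref{Aut(S_f) subgroup corollary} simply reads off the $i=0$ instance of condition~\eqref{eqn:neccessary}, and for $n<m-1$ the only change is that one restricts to the subgroup of automorphisms of the explicit form $H_{\tau,k}$ given by Theorem~\ref{thm:automorphism_of_Sf_field_caseII}(i). So for (i) there is nothing to add.

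For part (ii), the concern you raise is not a bookkeeping detail you can close --- it is a genuine obstruction, and it points to a slip in the statement rather than a gap in your reasoning. With the hypothesis $b_i\in\{0,a_i\}$ as written, the index set $\{i:b_i\neq 0\}$ is contained in $\{i:a_i\neq 0\}$, so the system of conditions~\eqref{eqn:neccessary} for $S_g$ is a \emph{subset} of the system for $S_f$. Consequently any $H_{\tau,k}\in{\rm Aut}_F(S_f)$ automatically lies in ${\rm Aut}_F(S_g)$, giving the reverse inclusion $\{H_{\tau,k}\in{\rm Aut}_F(S_f)\}\subset\{H_{\tau,k}\in{\rm Aut}_F(S_g)\}$. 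A small example shows the stated inclusion can fail: with $g(t)=t^3-a_0$ and $f(t)=t^3-a_1t-a_0$ (so $b_1=0\in\{0,a_1\}$), the map $H_{id,k}$ lies in ${\rm Aut}_F(S_g)$ whenever $k\sigma(k)\sigma^2(k)=1$, but membership in ${\rm Aut}_F(S_f)$ additionally forces $\sigma(k)\sigma^2(k)=1$, i.e.\ $k=1$.

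The intended hypothesis is almost certainly $a_i\in\{0,b_i\}$ (so that (i) is the special case $a_0=b_0$, $a_i=0$ for $i\geq1$). Under that corrected assumption your ``analogous to (i)'' argument goes through verbatim: every nonzero $a_i$ equals $b_i$, so the constraint $\tau(a_i)=\big(\prod_{l=i}^{m-1}\sigma^l(k)\big)a_i$ is inherited from the corresponding constraint for $g$, while the indices with $a_i=0$ impose nothing. The paper's own proof (``analogous to (i)'') is clearly written with this reading in mind.
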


The automorphism groups of $S_f$ with $f(t)=t^m-a\in K[t;\sigma]$   are therefore particularly relevant.

%
%

\section{Necessary conditions for extending Galois automorphisms to $S_f$}\label{sec:nec}

From now on we restrict ourselves to the situation that  $R=K[t;\sigma]$ and $F={\rm Fix}(\sigma)$,
 where $K/F$ is a finite Galois field extension and  $\sigma$ of order $n$.

We  take a closer look at Equality
(\ref{eqn:neccessary}), which gives necessary conditions on how to
choose the elements $k\in K^\times$ used to extend $\tau\in {\rm Gal}(K/F)$ to ${\rm
Aut}_F(S_f)$. These become more restrictive for the choice of the elements $k$,
the more coefficients in $f(t)$ are non-zero.
 Let $N_{K/F} : K \rightarrow F$ be the norm of $K/F$.
 All monic polynomials $f$ considered in the following are assumed to not be invariant and of degree $m$.

\begin{proposition}\label{prop:conditionsonk}
Suppose that $\sigma$ and $\tau$ commute. Let  $f(t) = t^m - \sum_{i=0}^{m-1} a_i t^i \in K[t; \sigma]$ and $k
\in K^{\times}$  such that $$(1) \quad \quad\quad \tau(a_i) = \Big(
\prod_{l=i}^{m-1}\sigma^l(k) \Big) a_i$$ for all $i \in\{ 0, \ldots,
m-1\}$. Then:
\\ (i) For all $i \in\{ 0, \ldots, m-1\}$ with $a_i\not=0$, $N_{K/F}(k)$ is an $(m-i)$th root of unity.
\\
 In particular, if  $a_0\not=0$ (e.g., if $f(t)$ is irreducible) then $N_{K/F}(k)$ is an $m$th root of unity,
 and if $a_{m-1}\not=0$ then $N_{K/F}(k)  =1$.
\\
 If  $a_{m-1}\in  {\rm Fix}(\tau)^\times$ then $k=1$.
 \\ (ii)
 If $\tau \neq id$ and there is some $i$ such that $a_i$ is not contained in
  ${\rm Fix}(\tau)$, then $k\not=1$.
 \\ (iii)  Suppose that there is some  $a_i\not=0$ and $F$ does not contain any
  non-trivial $(m-i)$th roots of unity.
Then $N_{K/F}(k)=1$.
\\ (iv) If there is an $i \in\{ 0, \ldots, m-1\}$ such that $a_i\in {\rm Fix}(\tau)^\times$, then
$1 = \prod_{l=i}^{m-1}\sigma^l(k).$
 In particular, if $n=m$,  $\sigma$ generates ${\rm Gal}(K/F)$, and $a_0\in {\rm Fix}(\tau)^\times$ then $k\in{\rm ker}(N_{K/F})$.
\\ (v) Suppose $\tau=id_K$. Then for all $i \in\{ 0, \ldots, m-1\}$ with $a_i\not=0$,
$1 =  \prod_{l=i}^{m-1}\sigma^l(k).$
 In particular, if $n=m$, $\sigma$ generates ${\rm Gal}(K/F)$ and $a_0\not=0$ then $k\in{\rm ker}(N_{K/F})$. In this case,
 the automorphisms extending $id_K$  are in one-one correspondence with those
 $k\in {\rm ker}(N_{K/F})$ satisfying  (\ref{eqn:neccessary}).
\end{proposition}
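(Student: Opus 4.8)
The plan is to establish each part by substituting the given Equation (1) into the norm map and exploiting the commutativity of $\sigma$ and $\tau$. First I would apply the norm $N_{K/F}$ to both sides of Equation (1). Since $K/F$ is a finite Galois extension with $\mathrm{Gal}(K/F)$ generated in part by $\sigma$ (of order $n$), and since $\sigma$ commutes with $\tau$, the norm is $\tau$-invariant: applying $\tau$ permutes the Galois conjugates, so $N_{K/F}(\tau(a_i)) = N_{K/F}(a_i)$. For the right-hand side, $N_{K/F}\big(\prod_{l=i}^{m-1}\sigma^l(k)\big) = \prod_{l=i}^{m-1} N_{K/F}(\sigma^l(k)) = N_{K/F}(k)^{m-i}$, because the norm is $\sigma$-invariant and there are $m-i$ factors. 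Thus for every $i$ with $a_i \neq 0$ we may cancel $N_{K/F}(a_i) \neq 0$ to conclude $N_{K/F}(k)^{m-i} = 1$, which is part (i). The two "in particular" claims follow by taking $i=0$ and $i=m-1$; the final assertion of (i), that $a_{m-1} \in \mathrm{Fix}(\tau)^\times$ forces $k=1$, comes from Equation (1) at $i=m-1$ which reads $\tau(a_{m-1}) = \sigma^{m-1}(k)a_{m-1}$, so $a_{m-1} = \sigma^{m-1}(k)a_{m-1}$ and hence $\sigma^{m-1}(k)=1$, giving $k=1$ since $\sigma^{m-1}$ is a field automorphism.

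Parts (ii) through (iv) are then short deductions from Equation (1). For (ii) I would argue contrapositively: if $k=1$, then the right-hand side of (1) collapses to $a_i$ for every $i$, forcing $\tau(a_i)=a_i$ for all $i$, i.e.\ all $a_i \in \mathrm{Fix}(\tau)$, contradicting the hypothesis. Part (iii) is immediate from (i): if $F$ has no nontrivial $(m-i)$th root of unity and $N_{K/F}(k)^{m-i}=1$ with $N_{K/F}(k)\in F$ (the norm lands in $F$), then $N_{K/F}(k)=1$. For (iv), when $a_i \in \mathrm{Fix}(\tau)^\times$ we have $\tau(a_i)=a_i$, so Equation (1) gives $a_i = \big(\prod_{l=i}^{m-1}\sigma^l(k)\big)a_i$, and cancelling $a_i$ yields $1 = \prod_{l=i}^{m-1}\sigma^l(k)$. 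When additionally $n=m$ and $\sigma$ generates $\mathrm{Gal}(K/F)$, taking $i=0$ gives $1 = \prod_{l=0}^{m-1}\sigma^l(k) = N_{K/F}(k)$, so $k \in \ker(N_{K/F})$.

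Part (v) is the specialization $\tau = id_K$, where $\mathrm{Fix}(\tau)=K \supseteq \{a_i\}$, so every nonzero $a_i$ automatically lies in $\mathrm{Fix}(id_K)^\times$; applying (iv) to each such $i$ gives $1 = \prod_{l=i}^{m-1}\sigma^l(k)$, and the $n=m$ case with $i=0$ again yields $k \in \ker(N_{K/F})$. The final bijection statement follows from Theorem \ref{thm:automorphism_of_Sf_field_case}: for $\tau = id_K$ the automorphism $H_{id,k}$ is completely determined by $k$, two such maps are equal exactly when their defining $k$ agree, and the admissible $k$ are precisely those in $\ker(N_{K/F})$ satisfying \eqref{eqn:neccessary}, so $k \mapsto H_{id,k}$ is the asserted one-one correspondence. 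I do not expect a serious obstacle here; the only point requiring care is justifying the $\tau$-invariance of $N_{K/F}$ in the opening step, which rests on $\tau$ normalizing (indeed centralizing) the Galois group so that composition with $\tau$ merely permutes the conjugating automorphisms $\sigma^l$ and hence leaves the product defining the norm unchanged.
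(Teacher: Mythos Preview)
Your proposal is correct and follows essentially the same route as the paper: apply $N_{K/F}$ to both sides of (1) to obtain $N_{K/F}(k)^{m-i}=1$ for each nonzero $a_i$, then read off (ii)--(v) directly from (1) by the obvious substitutions and cancellations. One small remark: your closing worry about the $\tau$-invariance of $N_{K/F}$ is overcomplicated --- since $K/F$ is finite Galois and $\tau$ fixes $F$, we have $\tau\in{\rm Gal}(K/F)$, so left multiplication by $\tau$ permutes the full Galois group and hence leaves the norm product unchanged, with no need to invoke commutativity with $\sigma$ or to restrict attention to the powers $\sigma^l$.
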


\begin{proof}
(i) Equality (\ref{eqn:neccessary}) states that $\tau(a_i) = \Big(
\prod_{l=i}^{m-1}\sigma^l(k) \Big) a_i$ for all $i \in\{ 0, \ldots,
m-1\}$. Thus $N_{K/F}(a_i) =  \prod_{l=i}^{m-1}N_{K/F}(\sigma^l(k))
N_{K/F}(a_i)$
 (apply $N_{K/F}$ to both sides of (\ref{eqn:neccessary})), and therefore
$N_{K/F}(a_i) =  N_{K/F}(k)^{m-i} N_{K/F}(a_i)$ for all $i \in\{ 0,
\ldots, m-1\}$ is a necessary condition on $k$.  For all $a_i\not=0$,
this yields $1 =  N_{K/F}(k)^{m-i}$ therefore $N_{K/F}(k)\in
F^\times$ must be an $(m-i)$th root of unity, for all $i \in\{ 0,
\ldots, m-1\}$, with $a_i\not=0$.
Hence if $a_{m-1}\not=0$ then $\tau(a_{m-1}) = \sigma^{m-1}(k)  a_{m-1}$, thus
$N_{K/F}(a_{m-1}) = N_{K/F}(k)  N_{K/F}(a_{m-1})$, i.e. $N_{K/F}(k)  =1$.
 If even $a_{m-1}\in  {\rm Fix}(\tau)^\times$ then $a_{m-1} = \sigma^{m-1}(k)  a_{m-1}$ means $\sigma^{m-1}(k)=1$, i.e. $k=1$.
\\ (ii)  $k=1$ implies $\tau(a_i) = a_i$, i.e. $a_i\in {\rm Fix}(\tau)$ for all $i \in\{ 0, \ldots,
m-1\}$.
\\ (iii) By (i), $N_{K/F}(k)\in F^\times$ is an $(m-i)$th root of unity, for all $i \in\{ 0, \ldots, m-1\}$
 with $a_i\not=0$.
If $F$ does not contain any non-trivial $(m-i)$th roots of unity, then
$N_{K/F}(k)=1$.
\\ (iv) If there is an $i \in\{ 0, \ldots, m-1\}$ such that $a_i\in {\rm Fix}(\tau)^\times$, then  (\ref{eqn:neccessary})
 becomes
$1 = \prod_{l=i}^{m-1}\sigma^l(k) $. In particular, if  $a_0\in
{\rm Fix}(\tau)^\times$, $m=n$ and $\sigma$ generates ${\rm Gal}(K/F)$, then
$N_{K/F}(k)=1$ is a necessary condition on $k$.
\\ (v) Here,  (\ref{eqn:neccessary}) becomes
 $1 =  \prod_{l=i}^{m-1}\sigma^l(k)$  for all $i \in\{ 0, \ldots, m-1\}$
 with $a_i\not=0$. In particular,
if $n=m$,  $\sigma$ generates ${\rm Gal}(K/F)$ and $a_0\not=0$ (which happens if $f(t)$ is irreducible)
then
 $N_{K/F}(k)=1$
 is a necessary condition on $k$.
\end{proof}

For instance, Proposition \ref{prop:conditionsonk} (i) yields that $k=1$ if  $a_{m-1}\in  {\rm Fix}(\tau)^\times$
 and so Theorems \ref{thm:automorphism_of_Sf_field_case} and \ref{thm:automorphism_of_Sf_field_caseII} become:

 \begin{theorem} \label{thm:automorphism_of_Sf_field_caseIV}
Suppose $\sigma$ commutes with all $\tau\in {\rm Gal}(K/F)$ and $f(t) = t^m - \sum_{i=0}^{m-1} a_i t^i \in K[t;\sigma]$
is  not invariant  with $a_{m-1}\in F^\times$.
\\ (i) Let  $n\geq m-1$.  If $a_i\not\in {\rm Fix}(\tau)$ for all $\tau\not=id$ and all
non-zero $a_i$, $i\not=m-1$, then ${\rm Aut}_F(S_f)=\{id\}.$
\\
If $f(t)\in F[t;\sigma]$, any automorphism $H$ of $S_f$ has the form $H_{\tau , 1}$
where  $\tau\in {\rm Gal}(K/F)$,
and
${\rm Aut}_F(S_f)\cong {\rm Gal}(K/F).$
\\ (ii) Let  $n < m-1$.  If $f(t)\in F[t;\sigma]$ is not invariant,
 the maps  $H_{\tau , 1}$  are automorphisms of $S_f$  for all
$\tau\in {\rm Gal}(K/F) $ and
 ${\rm Gal}(K/F)$ is isomorphic to a subgroup of  ${\rm Aut}_F(S_f)$.
\end{theorem}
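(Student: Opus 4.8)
The plan is to read the statement off the structure theorems together with the single constraint forced by $a_{m-1}\in F^\times$. Since $K/F$ is finite Galois we have ${\rm Aut}_F(K) = {\rm Gal}(K/F)$, and every $\tau \in {\rm Gal}(K/F)$ fixes $F$ pointwise; in particular $a_{m-1} \in F^\times \subseteq {\rm Fix}(\tau)^\times$. The crucial first step is therefore to observe that the last assertion of Proposition \ref{prop:conditionsonk}(i) applies verbatim: the component $i = m-1$ of \eqref{eqn:neccessary} reads $\tau(a_{m-1}) = \sigma^{m-1}(k)a_{m-1}$, and since $\tau(a_{m-1}) = a_{m-1} \neq 0$ this forces $\sigma^{m-1}(k) = 1$, hence $k = 1$. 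Thus every $H_{\tau,k}$ that can possibly occur has $k = 1$, which collapses the whole analysis to a question about $\tau$.

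For part (i) I would take $n \geq m-1$ and invoke Theorem \ref{thm:automorphism_of_Sf_field_case}: every $H \in {\rm Aut}_F(S_f)$ equals $H_{\tau,k}$ for some $\tau \in {\rm Gal}(K/F)$ and $k \in K^\times$ satisfying \eqref{eqn:neccessary}. By the reduction above $k = 1$, so \eqref{eqn:neccessary} degenerates to $a_i \in {\rm Fix}(\tau)$ for all $i$. Under the first hypothesis (each nonzero $a_i$ with $i \neq m-1$ lies in no ${\rm Fix}(\tau)$ with $\tau\neq id$; note this already forces $f \notin F[t;\sigma]$, so the two cases are mutually exclusive, and it presupposes at least one such nonzero $a_i$) the relation $a_i \in {\rm Fix}(\tau)$ is incompatible with a nontrivial $\tau$, so $\tau = id$, $H = H_{id,1} = id_{S_f}$, and ${\rm Aut}_F(S_f) = \{id\}$. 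Under the second hypothesis $f \in F[t;\sigma]$, every $a_i \in F \subseteq {\rm Fix}(\tau)$, so \eqref{eqn:neccessary} holds automatically with $k = 1$ for each $\tau \in {\rm Gal}(K/F)$; hence $H_{\tau,1}$ is an automorphism for every $\tau$, and conversely each automorphism has this shape. It remains to check that $\tau \mapsto H_{\tau,1}$ is an isomorphism: injectivity is immediate since $H_{\tau,1}\vert_K = \tau$, and the composition rule $H_{\tau,k}\circ H_{\rho,b} = H_{\tau\rho,\tau(b)k}$ recorded in Theorem \ref{thm:automorphism_of_Sf_field_caseII}(i) gives at $k = b = 1$ the identity $H_{\tau,1}\circ H_{\rho,1} = H_{\tau\rho,1}$, so the map is a group homomorphism. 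This yields ${\rm Aut}_F(S_f) \cong {\rm Gal}(K/F)$.

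For part (ii), with $n < m-1$ and $f \in F[t;\sigma]$ not invariant, I would argue only the one direction that is available. By Theorem \ref{thm:automorphism_of_Sf_field_caseII}(i), for each $k \in K^\times$ satisfying \eqref{eqn:neccessary} the map $H_{\tau,k}$ is an automorphism, and these form a subgroup of ${\rm Aut}_F(S_f)$. Since $a_i \in F$, the choice $k = 1$ satisfies \eqref{eqn:neccessary} for every $\tau \in {\rm Gal}(K/F)$, so all $H_{\tau,1}$ are automorphisms; by the same composition rule and injectivity as above, $\tau \mapsto H_{\tau,1}$ embeds ${\rm Gal}(K/F)$ as a subgroup of ${\rm Aut}_F(S_f)$. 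One cannot upgrade this to an isomorphism, because when $n < m-1$ Theorem \ref{thm:automorphism_of_Sf_field_caseII}(ii) permits automorphisms whose value $H(t) = g(t)$ carries genuinely higher-degree terms $k_{1+ln}t^{1+ln}$, which need not be of the form $H_{\tau,k}$.

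The argument is essentially bookkeeping over the earlier machinery, so there is no deep obstacle; the one point requiring care is the logical handling of the hypotheses in part (i)—in particular verifying that the trivial-automorphism case genuinely needs a nonzero off-diagonal coefficient and is thereby disjoint from the $f\in F[t;\sigma]$ case—together with the discipline, in part (ii), of claiming only a subgroup rather than the full automorphism group.
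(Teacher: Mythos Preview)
Your proof is correct and follows essentially the same route as the paper: invoke Theorem~\ref{thm:automorphism_of_Sf_field_case} (respectively Theorem~\ref{thm:automorphism_of_Sf_field_caseII}(i)) to obtain the form $H_{\tau,k}$, use Proposition~\ref{prop:conditionsonk}(i) with $a_{m-1}\in F^\times\subset{\rm Fix}(\tau)^\times$ to force $k=1$, and then read off the two cases in (i) from the reduced condition $\tau(a_i)=a_i$. Your additional bookkeeping (explicit verification that $\tau\mapsto H_{\tau,1}$ is a group monomorphism, and the observation that the first hypothesis in (i) tacitly requires some nonzero $a_i$ with $i\neq m-1$ to avoid vacuous overlap with the case $f\in F[t;\sigma]$) is sound and slightly more careful than the paper's own argument.
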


\begin{proof} (i) $H$ is an
automorphism of $S_f$ if and only if $H$ has the form $H_{\tau , k}$,
where  $\tau\in {\rm Gal}(K/F)$ and $k \in K^{\times}$ is such that
$\tau(a_i) = \Big( \prod_{l=i}^{m-1}\sigma^l(k) \Big) a_i$
for all $i \in\{ 0, \ldots, m-1\}$. Since $a_{m-1}\in F^\times$ we have $ a_{m-1}\in {\rm Fix}(\tau)^\times$ for all $\tau$ which
forces $k=1$ as the only possibility for any
 $\tau\in {\rm Gal}(K/F)$ by Proposition \ref{prop:conditionsonk} (i). This in turn means that any extension
$H_{\tau , k}$ has the form $H_{\tau , 1}$. In particular, the existence of an extension $H_{\tau , k}$, $\tau\not=id$,
  implies
$\tau(a_i) =  a_i$ for all non-zero $a_i$, $i\not=m-1$, that is $a_i\in{\rm Fix}(\tau)$
for all non-zero $a_i$.

Thus if  $a_i\not\in {\rm Fix}(\tau)$ for all $\tau\not=id$ and all $i \in\{ 0, \ldots, m-2\}$   then
there is no non-trivial $\tau$ that extends to an automorphism
of $S_f$ and ${\rm Aut}_F(S_f)= \{H_{id , 1}\}=\{id\}$.

If  $f(t)\in  F[t;\sigma]$ then ${\rm Aut}_F(S_f)=\{H_{\tau , 1}\}\cong {\rm Gal}(K/F)$.
\\ (ii) follows from (i) and Theorem \ref{Aut(S_f) subgroup corollaryII}.
\end{proof}

Note that indeed Condition (\ref{eqn:neccessary}) heavily restricts the choice of available
$k$ to $k=1$ in most cases.

\begin{corollary} \label{cor:Sandler}
Suppose $\sigma$ commutes with all $\tau\in {\rm Gal}(K/F)$. Let  $n
\geq m-1$ and  $f(t)=t^m-a_0\in K[t;\sigma]$, $a_0\in K\setminus F$.
\\ (i) $H\in {\rm Aut}_F(S_f)$ if and only if $H=H_{\tau , k}$ where   $k \in K^{\times}$ is such that
$ \tau(a_0) = \Big( \prod_{l=0}^{m-1}\sigma^l(k) \Big) a_0. $ In
particular, here $N_{K/F}(k)$ is an $m$th root of unity.
\\ (ii) For all $g(t) = t^m - \sum_{i=0}^{m-1} a_i t^i \in K[t; \sigma]$ with
$a_0\in K\setminus F$,  ${\rm Aut}_F(S_g)$ is a subgroup of
${\rm Aut}_F(S_f)$.
\end{corollary}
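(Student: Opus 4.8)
The plan is to derive both parts of Corollary \ref{cor:Sandler} as direct specializations of the general results already established in the excerpt, since $f(t)=t^m-a_0$ is the single-coefficient case in which all but one of the defining equations \eqref{eqn:neccessary} become vacuous.

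\medskip

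For part (i), I would begin by invoking Theorem \ref{thm:automorphism_of_Sf_field_case}, which applies because $n\geq m-1$ and $\sigma$ commutes with all $\tau\in{\rm Gal}(K/F)\subseteq{\rm Aut}_F(K)$. That theorem tells us $H\in{\rm Aut}_F(S_f)$ if and only if $H=H_{\tau,k}$ with $\tau\in{\rm Aut}_F(K)$ and $k\in K^\times$ satisfying \eqref{eqn:neccessary} for every $i\in\{0,\dots,m-1\}$. The key observation is that for $f(t)=t^m-a_0$ we have $a_i=0$ for all $i\in\{1,\dots,m-1\}$, so the conditions for those indices $i$ read $\tau(0)=(\prod_{l=i}^{m-1}\sigma^l(k))\cdot 0$, which hold trivially. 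Hence the only surviving constraint is the one for $i=0$, namely $\tau(a_0)=\big(\prod_{l=0}^{m-1}\sigma^l(k)\big)a_0$, giving exactly the stated characterization. (Since $K/F$ is Galois here, ${\rm Aut}_F(K)={\rm Gal}(K/F)$, so the $\tau$ in Theorem \ref{thm:automorphism_of_Sf_field_case} is automatically a Galois automorphism.) The final sentence, that $N_{K/F}(k)$ is an $m$th root of unity, follows immediately from Proposition \ref{prop:conditionsonk}(i) applied with $i=0$ and $a_0\neq 0$; alternatively one simply applies $N_{K/F}$ to the $i=0$ equation and uses that $N_{K/F}(\sigma^l(k))=N_{K/F}(k)$ together with $a_0\neq 0$ to cancel.

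\medskip

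For part (ii), I would argue that this is precisely Theorem \ref{Aut(S_f) subgroup corollary}(i) applied to the polynomial $g$. Writing $f(t)=t^m-a_0$ with $a_0=b_0\in K\setminus F$ (where $b_0$ denotes the constant coefficient of $g$), the hypotheses of Theorem \ref{Aut(S_f) subgroup corollary}(i) are met: $n\geq m-1$, $\sigma$ commutes with all $\tau\in{\rm Gal}(K/F)$, and $g$ is not invariant (which holds since $a_0=b_0\notin F$ forces $g$ to be non-invariant by Theorem \ref{Properties of S_f petit}(v), as invariance would require $b_0\in F$). That theorem then yields directly that ${\rm Aut}_F(S_g)$ is a subgroup of ${\rm Aut}_F(S_f)$. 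Concretely, the inclusion holds because any $H=H_{\tau,k}\in{\rm Aut}_F(S_g)$ must in particular satisfy the $i=0$ condition $\tau(b_0)=\big(\prod_{l=0}^{m-1}\sigma^l(k)\big)b_0$, which is exactly condition (i) for $S_f$, so $H$ is also an automorphism of $S_f$ by part (i).

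\medskip

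I do not anticipate a genuine obstacle here, since the corollary is a packaging of earlier theorems rather than a new result; the only point requiring mild care is the bookkeeping of the indices in \eqref{eqn:neccessary} to confirm that all equations except the one at $i=0$ are automatically satisfied when $f(t)=t^m-a_0$. I would also make sure to note explicitly that $a_0\in K\setminus F$ guarantees $f$ is not invariant (so that Theorem \ref{thm:automorphism_of_Sf_field_case} genuinely applies in its non-associative form), since the whole framework of the excerpt is premised on $S_f$ being non-associative.
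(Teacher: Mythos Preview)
Your proposal is correct and follows essentially the same approach as the paper's own proof, which simply cites Theorem~\ref{thm:automorphism_of_Sf_field_case} together with Proposition~\ref{prop:conditionsonk} for part~(i), and Theorem~\ref{Aut(S_f) subgroup corollary} for part~(ii). Your additional remarks (that the equations for $i\geq 1$ are vacuous, and that $a_0\in K\setminus F$ ensures non-invariance) are accurate elaborations of details the paper leaves implicit.
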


\begin{proof}
(i) follows from Theorem \ref{thm:automorphism_of_Sf_field_case} and
Proposition \ref{prop:conditionsonk}.
\\ (ii) follows from Theorem \ref{Aut(S_f) subgroup corollary}.
\end{proof}

For $f(t) = t^m-a_0 \in K[t;\sigma]$, $a_0\in K\setminus F$,
 the automorphisms $H_{id,k}$ extending $id_K$ thus  are in one-to-one correspondence with those $k$
 satisfying
$ \prod_{l=0}^{m-1}\sigma^l(k) =1$
(in particular, we have $N_{K/F}(k)^m=1$).
Analogously, we still obtain for $n < m-1$ employing Theorem
\ref{thm:automorphism_of_Sf_field_caseII} and Theorem \ref{Aut(S_f) subgroup corollaryII}:

\begin{corollary} \label{cor:SandlerII}
Suppose $\sigma$ commutes with all $\tau\in {\rm Gal}(K/F)$. Let  $n < m-1$
 and  $f(t)=t^m-a_0\in K[t;\sigma]$, $a_0\in K\setminus F$.
\\ (i) For all  $k \in K^{\times}$ with $N_{K/F}(k)$ an $m$th root of unity and
$ \tau(a_0) = \Big( \prod_{l=0}^{m-1}\sigma^l(k) \Big) a_0,$
 the maps  $H_{\tau , k}$ are automorphisms of $S_f$.
\\ (ii) For all $g(t) = t^m - \sum_{i=0}^{m-1} a_i t^i \in K[t; \sigma]$ with $a_0\in K\setminus F$,
$\{H\in {\rm Aut}_F(S_g)\,|\, H=H_{\tau , k} \}$ is a subgroup
of $\{H\in {\rm Aut}_F(S_f)\,|\, H=H_{\tau , k} \}.$
\end{corollary}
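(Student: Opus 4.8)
The plan is to deduce Corollary~\ref{cor:SandlerII} from the structure theory already established in the case $n<m-1$, namely Theorem~\ref{thm:automorphism_of_Sf_field_caseII} and Theorem~\ref{Aut(S_f) subgroup corollaryII}, combined with the norm computation from Proposition~\ref{prop:conditionsonk}(i). The key point to exploit is that the polynomial under consideration is the pure polynomial $f(t)=t^m-a_0$, for which the necessary condition~(\ref{eqn:neccessary}) collapses to a \emph{single} equation, and consequently the subtle issue present in the general $n<m-1$ case --- where $H(t)$ could a priori be a genuine polynomial $g(t)=k_1t+k_{1+n}t^{1+n}+\cdots$ rather than a monomial $kt$ --- should simplify.

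For part~(i), I would argue as follows. By Theorem~\ref{thm:automorphism_of_Sf_field_caseII}(i), for every $k\in K^\times$ satisfying~(\ref{eqn:neccessary}) for all $i$, the map $H_{\tau,k}$ is an automorphism of $S_f$. For $f(t)=t^m-a_0$ the only nonzero coefficient is $a_0$, so~(\ref{eqn:neccessary}) reduces to the single requirement $\tau(a_0)=\bigl(\prod_{l=0}^{m-1}\sigma^l(k)\bigr)a_0$, exactly the condition stated in~(i). Applying the norm $N_{K/F}$ to both sides, as in the proof of Proposition~\ref{prop:conditionsonk}(i), yields $N_{K/F}(a_0)=N_{K/F}(k)^m\,N_{K/F}(a_0)$, whence $N_{K/F}(k)^m=1$; that is, $N_{K/F}(k)$ is an $m$th root of unity. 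So the single displayed condition on $k$ automatically forces the root-of-unity statement, and both together characterize when $H_{\tau,k}$ is an automorphism. Thus part~(i) is a direct specialization.

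For part~(ii), the approach mirrors the proof of Theorem~\ref{Aut(S_f) subgroup corollaryII}(i). Let $g(t)=t^m-\sum_{i=0}^{m-1}a_it^i$ with $a_0\in K\setminus F$, and take any $H\in{\rm Aut}_F(S_g)$ of the form $H=H_{\tau,k}$. By the analogue of~(\ref{eqn:neccessary}) for $g$, the pair $(\tau,k)$ satisfies $\tau(a_i)=\bigl(\prod_{l=i}^{m-1}\sigma^l(k)\bigr)a_i$ for all $i$; in particular the $i=0$ instance gives $\tau(a_0)=\bigl(\prod_{l=0}^{m-1}\sigma^l(k)\bigr)a_0$, which is precisely the condition from~(i) for $f(t)=t^m-a_0$. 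Hence the same $H_{\tau,k}$ is an automorphism of $S_f$ by part~(i), showing the set inclusion, and the subgroup property follows from the composition and inversion formulas $H_{\tau,k}\circ H_{\rho,b}=H_{\tau\rho,\tau(b)k}$ and $H_{\tau,k}^{-1}=H_{\tau^{-1},\tau^{-1}(k^{-1})}$ recorded in the proof of Theorem~\ref{thm:automorphism_of_Sf_field_caseII}(i).

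The main obstacle I anticipate is bookkeeping rather than conceptual. In the regime $n<m-1$ one cannot conclude $H(t)=kt$, so the automorphisms $H_{\tau,k}$ form only a distinguished \emph{subset} of ${\rm Aut}_F(S_f)$, not all of it --- this is why the statement is phrased in terms of the sets $\{H\in{\rm Aut}_F(S_g)\mid H=H_{\tau,k}\}$ rather than the full automorphism groups. I must therefore be careful to restrict attention throughout to automorphisms of the monomial form $H_{\tau,k}$, and check that the inclusion and closure under composition/inverse stay inside this distinguished subset; the composition formula shows that $H_{\tau,k}\circ H_{\rho,b}$ is again of monomial form, so this subset is genuinely a subgroup and the argument closes cleanly.
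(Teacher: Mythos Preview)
Your proposal is correct and follows essentially the same route as the paper: part~(i) is obtained by specializing Theorem~\ref{thm:automorphism_of_Sf_field_caseII}(i) to $f(t)=t^m-a_0$ together with the norm computation of Proposition~\ref{prop:conditionsonk}(i), and part~(ii) is exactly Theorem~\ref{Aut(S_f) subgroup corollaryII}(i). Your additional remark that the root-of-unity condition on $N_{K/F}(k)$ is actually a consequence of the displayed equation (rather than an independent hypothesis) is a correct and useful observation.
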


For $m=n$ and $K/F$ a cyclic field extension, the algebras considered in Corollary \ref{cor:Sandler} are called
\emph{nonassociative cyclic algebras of degree $m$}, as they can be
seen as canonical generalizations of associative cyclic algebras.
These algebras are treated in Section \ref{sec:nonasscyclic}.

%
%

\section{Automorphisms extending $id_K$ when $K/F$ is a cyclic field extension}\label{subsec:I}

 Let
$f(t) = t^m -\sum_{i=0}^{m-1} a_i t^i \in K[t; \sigma]$
not be invariant.
In general, we know that if $S_f$ has nucleus $K$ then every inner automorphism $G_c$ with $c\in K^\times$,
extends $id_K$. Conversely, an extension $H_{id,k}$ of $id_K$ is inner for the right choice of $k$:

\begin{lemma} \label{le:inner_general}
Let $k = c^{-1}\sigma(c)$ with $c \in K^{\times}$, then $H_{id,k}\in {\rm Aut}_F(S_f)$ is an inner automorphism.
\end{lemma}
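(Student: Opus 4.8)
The plan is to show that the normal-form automorphism $H_{id,k}$ is literally the conjugation map $G_c(x)=(c^{-1}x)c$ attached to $c$, and then to read off innerness directly from the definition in Section \ref{subsec:nonassalgs}. First I would record that $c^{-1}$, the inverse of $c$ in the field $K$, is a left inverse of $c$ in $S_f$: since $c,c^{-1}\in K$ have degree $0$, the product $c^{-1}\circ c$ involves no right division by $f$ and equals $1$. Hence, provided the map $x\mapsto(c^{-1}x)c$ is an automorphism, it is an inner automorphism in the sense of Section \ref{subsec:nonassalgs}, with $m=c$ and $m_l=c^{-1}$. That it is an automorphism is exactly the assertion $H_{id,k}\in{\rm Aut}_F(S_f)$ once the identification below is made (it extends $id_K$ since $H_{id,k}|_K=id$).

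Next I would compute $G_c$ explicitly on a general element $x=\sum_{i=0}^{m-1}x_it^i$. Left multiplication by $c^{-1}\in K$ only rescales coefficients and preserves degrees, so $c^{-1}x=\sum_i c^{-1}x_it^i$ with no reduction modulo $f$. Right multiplication of this by $c\in K$, using $t^ic=\sigma^i(c)t^i$, again preserves degrees, so $(c^{-1}x)c=\sum_i c^{-1}x_i\sigma^i(c)t^i=\sum_i x_i\big(c^{-1}\sigma^i(c)\big)t^i$, the last equality using commutativity of $K$. The key step is the telescoping identity $c^{-1}\sigma^i(c)=\prod_{l=0}^{i-1}\sigma^l\big(c^{-1}\sigma(c)\big)=\prod_{l=0}^{i-1}\sigma^l(k)$, valid because in each factor $\sigma^l(c)^{-1}\sigma^{l+1}(c)$ the right part cancels against the left part of the next factor. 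Substituting this yields precisely the defining formula of $H_{id,k}$, i.e. the case $\tau=id$ of Theorem \ref{thm:automorphism_of_Sf_field_case}.

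Thus $H_{id,k}=G_c$ as maps on $S_f$, and since $c$ admits the left inverse $c^{-1}$, this exhibits $H_{id,k}$ as an inner automorphism, completing the proof. I expect the only point needing care is confirming that neither the left multiplication by $c^{-1}$ nor the right multiplication by $c$ triggers a reduction modulo $f$; both are immediate once one observes that multiplication by a degree-$0$ element of $K$ preserves the degree of each $t^i$ with $i\le m-1$, so all intermediate products stay inside $R_m$ and the $S_f$-product agrees with the product in $K[t;\sigma]$. (Alternatively, one may note that $c\in K={\rm Nuc}_l(S_f)={\rm Nuc}_m(S_f)$ by Theorem \ref{Properties of S_f petit}, so that innerness also follows from Proposition \ref{prop:inner_Wene}(i) whenever $c$ additionally lies in ${\rm Nuc}_r(S_f)$, which is precisely the condition making $k=c^{-1}\sigma(c)$ satisfy \eqref{eqn:neccessary}.) Everything else is routine telescoping cancellation.
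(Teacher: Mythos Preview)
Your argument is correct and is essentially the same as the paper's: both compute $G_c\big(\sum x_it^i\big)=\sum x_i\,c^{-1}\sigma^i(c)\,t^i$ and identify this with $H_{id,k}$ via the telescoping $c^{-1}\sigma^i(c)=\prod_{l=0}^{i-1}\sigma^l(k)$. The paper's proof is a one-line version of exactly this calculation; your extra care about why no reduction modulo $f$ occurs and why $c^{-1}$ is a left inverse in $S_f$ only makes explicit what the paper leaves implicit. Your parenthetical observation that $c\in K\cap{\rm Nuc}_r(S_f)$ is equivalent to $k=c^{-1}\sigma(c)$ satisfying \eqref{eqn:neccessary} with $\tau=id$ is a correct and pleasant addendum not spelled out in the paper, though it is not needed for the main line of the proof.
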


\begin{proof}
 A simple calculation shows that
 $G_c \Big( \sum_{i=0}^{m-1} x_i t^i \Big) = \Big( c^{-1} \sum_{i=0}^{m-1} x_i t^i  \Big) c=
 x_0 +
 \\
 \sum_{i=1}^{m-1} x_i c^{-1} \sigma^i(c) t^i=H_{id, k}\Big( \sum_{i=0}^{m-1} x_i t^i \Big).$
 \end{proof}

Let now $K/F$ be a cyclic Galois field extension of degree $n$ with ${\rm Gal}(K/F) = \langle\sigma \rangle$ and norm
 $N_{K/F} : K \rightarrow F, \quad N_{K/F}(k) = k \sigma(k) \sigma^2(k) \cdots\sigma^{n-1}(k).$
 By Hilbert's Theorem 90,
 ${\rm ker}(N_{K/F})=\Delta^{\sigma}(1),$
 where $\Delta^{\sigma}(l)=\{\sigma(c)lc^{-1}\,|\, c\in K^\times\}$
is the $\sigma$-conjugacy class of $l\in K^\times$ \cite{LL}.

\begin{theorem} \label{thm:inner_general}
(i) Every automorphism $H_{id,k}\in {\rm Aut}_F(S_f)$ such that $N_{K/F}(k) = 1$ is an inner automorphism.
\\ (ii)
If  $n\geq m-1$ and $a_{m-1}\not=0$, or if
$n=m$, $ a_i=0 $ for all $i\not=0$ and $ a_0\in K\setminus F$, then
 these are all the automorphisms extending $id_K$.
\end{theorem}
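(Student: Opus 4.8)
The plan is to derive (i) directly from Hilbert's Theorem 90 together with Lemma~\ref{le:inner_general}, and to obtain (ii) by combining Theorem~\ref{thm:automorphism_of_Sf_field_case} with the necessary conditions collected in Proposition~\ref{prop:conditionsonk}.

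For (i), I would argue as follows. Suppose $H_{id,k}\in{\rm Aut}_F(S_f)$ satisfies $N_{K/F}(k)=1$. Since ${\rm ker}(N_{K/F})=\Delta^{\sigma}(1)$ by Hilbert's Theorem 90, there is a $c\in K^\times$ with $k=\sigma(c)c^{-1}$; as $K$ is commutative this equals $c^{-1}\sigma(c)$, which is precisely the shape required in Lemma~\ref{le:inner_general}. That lemma then identifies $H_{id,k}$ with the inner automorphism $G_c$, so $H_{id,k}$ is inner. The only point deserving attention is matching the form $\sigma(c)c^{-1}$ delivered by Hilbert 90 with the form $c^{-1}\sigma(c)$ used in the lemma, which is immediate from commutativity of $K$.

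For (ii), note that in both listed cases $n\geq m-1$, so Theorem~\ref{thm:automorphism_of_Sf_field_case} forces any automorphism extending $id_K$ to have the form $H_{id,k}$ with $k\in K^\times$ satisfying (\ref{eqn:neccessary}) for $\tau=id$. It then suffices to prove $N_{K/F}(k)=1$: once this holds, (i) shows such $H_{id,k}$ are inner, while conversely every inner automorphism arising in (i) restricts to $id_K$ and is a genuine automorphism of $S_f$ by Lemma~\ref{le:inner_general}, so the two families coincide. To obtain $N_{K/F}(k)=1$, in the first case $a_{m-1}\neq 0$, so Proposition~\ref{prop:conditionsonk}(i) gives $N_{K/F}(k)=1$ (indeed, since $a_{m-1}\in{\rm Fix}(id)^\times$ one even gets $\sigma^{m-1}(k)=1$, hence $k=1$); in the second case $n=m$, $\sigma$ generates ${\rm Gal}(K/F)$ and $a_0\neq 0$, so Proposition~\ref{prop:conditionsonk}(v) yields $k\in{\rm ker}(N_{K/F})$, i.e. $N_{K/F}(k)=1$.

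The argument is essentially bookkeeping built on earlier results, so I expect no serious obstacle. The one place to be careful is the two-sided nature of the equality claimed in (ii): one must verify both that every $K$-fixing automorphism satisfies $N_{K/F}(k)=1$ (placing it among the inner automorphisms of (i)) and, conversely, that each inner automorphism from (i) does restrict to $id_K$ and is a genuine automorphism of $S_f$, the latter being supplied by Lemma~\ref{le:inner_general}.
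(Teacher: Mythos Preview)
Your proposal is correct and follows essentially the same approach as the paper: part (i) via Hilbert 90 plus Lemma~\ref{le:inner_general}, and part (ii) via Theorem~\ref{thm:automorphism_of_Sf_field_case} combined with Proposition~\ref{prop:conditionsonk}(i) and (v). Your extra observations---that in the first case one even obtains $k=1$, and the remark on the converse inclusion---are correct but not needed, since (ii) only asserts that every $id_K$-extending automorphism lies among those in (i).
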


\begin{proof}
 (i) Suppose there is $H_{id, k}\in {\rm Aut}_F(S_f)$ with $N_{K/F}(k) = 1$, then by Hilbert 90,
there exists $c \in K^{\times}$ such that $k = c^{-1}\sigma(c)$.
 Thus $H_{id,k}= H_{id, c^{-1}\sigma(c)}$ for  $c \in K^{\times}$ and so $G_c=H_{id, k}$ by Lemma \ref{le:inner_general}.
\\ (ii) By Theorem \ref{thm:automorphism_of_Sf_field_case} and Proposition \ref{prop:conditionsonk} (i), these are all the automorphisms extending $id_K$ when
 $n\geq m-1$ if $a_{m-1}\not=0$. The remaining assertion is proved analogoulsy.
\end{proof}

%
%

\section{Cyclic subgroups of ${\rm Aut}_F(S_f)$}\label{sec:cyclic_subgroups}

For any Galois field extension $K/F$ and $\sigma\in {\rm Gal}(K/F)$ of order $n$,
 we now give some conditions for ${\rm Aut}_F(S_f)$ to have  cyclic subgroups.

\begin{theorem} \label{thm:cyclic_subgrp}
Suppose $F$ contains an $s$th root of unity $\omega$.
Suppose that either
 $f(t) = t^s - a \in K[t;\sigma]$ where $a \in K \setminus F$, or
   $f(t) = t^{sl} -\sum_{i=0}^{l-1} a_{is} t^{is} \in K[t;\sigma]$ such that $S_f$ is not associative.
 Then $\langle H_{id,\omega}\rangle$ is a cyclic subgroup of ${\rm Aut}_F(S_f)$ of order at most $s$
 and of order $s$, if $\omega$ is a primitive
 root of unity.
\end{theorem}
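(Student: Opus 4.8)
The plan is to show that $H_{id,\omega}$ is simply the map scaling the coefficient of $t^i$ by $\omega^i$, to verify it satisfies the defining condition for an automorphism via Proposition~\ref{prop:automorphism_of_Sf_field_general}, and then to read off its order from its action on $t$.

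First I would compute $H_{id,\omega}$ explicitly. Since $\omega\in F={\rm Fix}(\sigma)$, we have $\sigma^l(\omega)=\omega$ for all $l$, hence $\prod_{l=0}^{i-1}\sigma^l(\omega)=\omega^i$, and formula (\ref{eqn:automain}) becomes
$$H_{id,\omega}\Big(\sum_{i=0}^{m-1}x_it^i\Big)=\sum_{i=0}^{m-1}x_i\omega^it^i.$$
Next I would check Condition (\ref{eqn:neccessary}) with $\tau=id$ and $k=\omega$. Because $\omega\in F$, it reads $a_j=\omega^{m-j}a_j$ for all $j$, i.e. $\omega^{m-j}=1$ whenever $a_j\neq0$. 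In the first case $f(t)=t^s-a$ we have $m=s$ and the only non-zero lower coefficient is $a_0$, for which $\omega^{m-0}=\omega^s=1$. In the second case $f(t)=t^{sl}-\sum_{i=0}^{l-1}a_{is}t^{is}$ we have $m=sl$ and every non-zero coefficient sits in a degree $j=is$, where $\omega^{m-j}=\omega^{s(l-i)}=(\omega^s)^{l-i}=1$. Thus Condition (\ref{eqn:neccessary}) holds in both cases, and Proposition~\ref{prop:automorphism_of_Sf_field_general}(i) yields that $H_{id,\omega}\in{\rm Aut}_F(S_f)$ and generates a subgroup. (In the first case $S_f$ is automatically non-associative, since $a\in K\setminus F$ forces $f$ to be non-invariant by Theorem~\ref{Properties of S_f petit}(iv),(v); in the second case non-associativity is assumed.)

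Finally I would determine the order. Using the composition law $H_{\tau,k}\circ H_{\rho,b}=H_{\tau\rho,\tau(b)k}$ from Theorem~\ref{thm:automorphism_of_Sf_field_caseII}(i), an immediate induction gives $H_{id,\omega}^{\,r}=H_{id,\omega^r}$ for all $r\geq 1$. Since $\omega^s=1$, we obtain $H_{id,\omega}^{\,s}=H_{id,1}=id$, so $\langle H_{id,\omega}\rangle$ has order dividing $s$, in particular at most $s$. If $\omega$ is a primitive $s$th root of unity, then $\omega^r\neq1$ for $0<r<s$; evaluating $H_{id,\omega^r}$ on the element $t\in S_f$ (which belongs to $S_f$ as $m\geq2$) gives $\omega^r t\neq t$, so $H_{id,\omega}^{\,r}\neq id$ and the order is exactly $s$.

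The only step requiring any care is the verification of (\ref{eqn:neccessary}), which I expect to be the crux; it reduces precisely to the arithmetic observation that $s\mid(m-j)$ for every degree $j$ carrying a non-zero coefficient of $f$. Everything else is formal, relying on $\omega$ being central in $K$ and on the composition rule for the $H_{\tau,k}$.
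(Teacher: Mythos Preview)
Your proof is correct and follows essentially the same route as the paper: verify Condition~(\ref{eqn:neccessary}) for $\tau=id$ and $k=\omega$ using $\omega\in F={\rm Fix}(\sigma)$, invoke Proposition~\ref{prop:automorphism_of_Sf_field_general}(i), and then use the composition rule $H_{id,\omega^j}\circ H_{id,\omega^r}=H_{id,\omega^{j+r}}$ to bound the order by $s$. Your explicit argument for the primitive case (evaluating at $t$ to see $H_{id,\omega^r}\neq id$ for $0<r<s$) is in fact more detailed than the paper's own proof, which states but does not spell out this step.
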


\begin{proof}
(i) Let $f(t) = t^s - a$. Then
$\omega^j \sigma(\omega^j) \cdots \sigma^{s-1}(\omega^j) = \omega^{js} = 1$
and so  $H_{id,\omega^j}\in {\rm
Aut}_F(S_f)$ for all $0 \leq j \leq s-1$ by Proposition \ref{prop:automorphism_of_Sf_field_general}.
 \\ (ii) Let $f(t) = t^{sl} -\sum_{i=0}^{l-1} a_{is} t^{is}$. Then we have
$\prod_{q=is}^{ls-1} \sigma^q(\omega^j) = \omega^{j(ls-is)} = 1$
for all $i = 0,\ldots,l-1$. Hence $a_{is} = \Big(\prod_{q=is}^{ls-1} \sigma^q(\omega^j) \Big) a_{is}$ for all
$i=0,\ldots,l-1$ and so $H_{id,\omega^j}\in {\rm Aut}_F(S_f)$ for all $0 \leq j \leq s-1$
by Proposition \ref{prop:automorphism_of_Sf_field_general}.
\\
In both (i) and (ii), $\langle H_{id,\omega}\rangle$ is a cyclic subgroup of ${\rm Aut}(S_f)$ of order less or equal to $s$,
 since $H_{id,\omega^j} \circ H_{id,\omega^r} = H_{id,\omega^{j+r}}$ for all $0 \leq j,r \leq sl-1$.
\end{proof}

\begin{lemma}
Let $F$ have characteristic not two, $m$ be even and
$f(t) = t^m - \sum_{i=0}^{(m-2)/2}a_{2i} t^{2i}\in K[t;\sigma]$
not invariant. Then $\left\{ H_{id,1}, H_{id,-1}
\right\}$ is a subgroup of $S_f$ of order $2$.
\end{lemma}

\begin{proof}
The maps $H_{id,1}$ and $H_{id,-1}$ are automorphisms of $S_f$
 by Proposition \ref{prop:automorphism_of_Sf_field_general},  and $H_{id,-1} \circ H_{id,-1} = H_{id,1}$.
\end{proof}

If $f\in F[t]\subset K[t;\sigma]$, we obtain:

\begin{theorem} \label{thm:fixedfield}
Suppose $\sigma$
commutes with all  $\tau\in {\rm Gal}(K/F)$, and
 $f(t) = t^m - \sum_{i=0}^{m-1} a_it^i \in F[t;\sigma]\subset K[t;\sigma]$
 is not invariant.
\\ (i) $\langle H_{\sigma,1}\rangle \cong \mathbb{Z}/n \mathbb{Z}$ is a cyclic subgroup of ${\rm Aut}_F(S_f)$.
\\ (ii) Suppose ${\rm Gal}(K/F)=\langle\sigma\rangle$, $n=m$ is prime,  $a_0 \neq 0$ and not all of
$a_1, \ldots, a_{m-1}$ are zero. Then ${\rm Aut}_F(S_f) =
\langle H_{\sigma,1}\rangle\cong \mathbb{Z}/m \mathbb{Z}.$
\end{theorem}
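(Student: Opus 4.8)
Let me plan a proof of both parts.

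The plan is to leverage the fact that $f \in F[t;\sigma]$ has all coefficients fixed by every $\tau \in \mathrm{Gal}(K/F)$, so that Condition \eqref{eqn:neccessary} becomes especially rigid. For part (i), I would first observe that since $\sigma$ commutes with all $\tau \in \mathrm{Gal}(K/F)$ and $\sigma \in \mathrm{Gal}(K/F)$ itself, the pair $(\tau,k)=(\sigma,1)$ is admissible: with $k=1$ every product $\prod_{l=i}^{m-1}\sigma^l(k)$ equals $1$, and since each $a_i \in F = \mathrm{Fix}(\sigma)$ we have $\sigma(a_i)=a_i$, so \eqref{eqn:neccessary} reads $a_i = a_i$ and holds trivially. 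Hence $H_{\sigma,1}$ is a genuine automorphism by Proposition \ref{prop:automorphism_of_Sf_field_general} (applied with the commuting hypothesis) or directly by Theorem \ref{thm:automorphism_of_Sf_field_case} when $n\geq m-1$; in fact the argument is uniform in $m$ versus $n$ because $k=1$ sidesteps the degree restriction. Using the composition law $H_{\tau,k}\circ H_{\rho,b}=H_{\tau\rho,\tau(b)k}$ recorded in the proof of Theorem \ref{thm:automorphism_of_Sf_field_caseII}, I compute $H_{\sigma,1}^j = H_{\sigma^j,1}$, so the cyclic group generated by $H_{\sigma,1}$ is $\{H_{\sigma^j,1} : 0\leq j < n\}$, which has order exactly $n$ since $\sigma$ has order $n$ and the $H_{\sigma^j,1}$ act as distinct maps on $K$. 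This gives $\langle H_{\sigma,1}\rangle \cong \mathbb{Z}/n\mathbb{Z}$.

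For part (ii), with the added hypotheses $\mathrm{Gal}(K/F)=\langle\sigma\rangle$, $n=m$ prime, $a_0\neq 0$, and at least one of $a_1,\dots,a_{m-1}$ nonzero, I must show there are no automorphisms beyond those already in $\langle H_{\sigma,1}\rangle$. Take any $H \in \mathrm{Aut}_F(S_f)$. Since $a_0\neq 0$, $S_f$ is not associative (as $f\notin$ invariant), so $\mathrm{Nuc}_l(S_f)=K$ and $H|_K = \tau$ for some $\tau\in\mathrm{Gal}(K/F)$; because $n=m$ we are in the case $n \geq m-1$ and Theorem \ref{thm:automorphism_of_Sf_field_case} forces $H=H_{\tau,k}$ with $k$ satisfying \eqref{eqn:neccessary}. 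The main obstacle, and the crux of the argument, is to show $k=1$. Here I would invoke Proposition \ref{prop:conditionsonk}: pick an index $j\in\{1,\dots,m-1\}$ with $a_j\neq 0$. Since $a_j\in F^\times=\mathrm{Fix}(\tau)^\times$, part (iv) of that proposition gives $1 = \prod_{l=j}^{m-1}\sigma^l(k)$. Combined with the relation from $a_0\neq 0$, namely $1=\prod_{l=0}^{m-1}\sigma^l(k) = N_{K/F}(k)$ (using $n=m$ and $\sigma$ generating the Galois group), I obtain several norm-type constraints on $k$.

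The key number-theoretic step is then to deduce $k=1$ from these constraints using the primality of $m=n$. From $\prod_{l=0}^{m-1}\sigma^l(k)=1$ and $\prod_{l=j}^{m-1}\sigma^l(k)=1$ I get $\prod_{l=0}^{j-1}\sigma^l(k)=1$ as well; since $\sigma$ generates a group of prime order $m$ and the exponents $\{l \bmod m\}$ appearing are consecutive, I would argue that the only way both a length-$j$ and a length-$(m-j)$ consecutive product of the $\sigma^l(k)$ can be trivial (for $0<j<m$, $\gcd(j,m)=1$ by primality) is to force $\sigma^l(k)$ constant across $l$, i.e. $k\in F$, whence $N_{K/F}(k)=k^m=1$ and, since $F^\times$ has the relevant root structure controlled by $\sigma$ fixing $k$, ultimately $k=1$ — the primality of $m$ is exactly what rules out nontrivial cyclic cancellation patterns. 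Once $k=1$ is established, $H=H_{\tau,1}\in\langle H_{\sigma,1}\rangle$, and since $\mathrm{Gal}(K/F)=\langle\sigma\rangle$ every such $\tau$ is a power of $\sigma$, giving $\mathrm{Aut}_F(S_f)=\langle H_{\sigma,1}\rangle\cong\mathbb{Z}/m\mathbb{Z}$. I expect the delicate point to be precisely this passage from the two consecutive-product relations to $k=1$, where the hypothesis that $m$ is prime (so that $j$ and $m-j$ are coprime to $m$) is indispensable.
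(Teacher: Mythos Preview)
Your approach to part~(i) matches the paper's exactly. For part~(ii) you take a route that avoids the paper's use of Hilbert~90, working directly with the product constraints on $k$. This can be made to work, but your final step contains a genuine gap: from $k \in F$ and $N_{K/F}(k) = k^m = 1$ alone you \emph{cannot} conclude $k=1$, since $F$ may well contain a nontrivial $m$th root of unity. The phrase ``$F^\times$ has the relevant root structure controlled by $\sigma$ fixing $k$'' is not an argument. The missing observation is that once $k \in F$, the second constraint $\prod_{l=j}^{m-1}\sigma^l(k)=1$ reads $k^{m-j}=1$; together with $k^m=1$ and $\gcd(m-j,m)=1$ (as $m$ is prime and $0<j<m$), this forces $k=1$. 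Your deduction that $k \in F$ is also only gestured at; to make it precise, apply $\sigma$ to the identity $\prod_{l=0}^{j-1}\sigma^l(k)=1$ and divide the result by the original to get $\sigma^j(k)/k=1$, whence $k\in\mathrm{Fix}(\sigma^j)=F$ by primality of $m$.

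By contrast, the paper writes $k=\sigma(\alpha)/\alpha$ for some $\alpha\in K^\times$ via Hilbert~90; the relation $\prod_{q=l}^{m-1}\sigma^q(k)=1$ then telescopes to $\alpha=\sigma^l(\alpha)$, so $\alpha\in\mathrm{Fix}(\sigma^l)=F$ and $k=\sigma(\alpha)/\alpha=1$ immediately. This is a little cleaner because it reaches $k=1$ in one stroke without needing to rule out roots of unity in $F$, but your direct approach is equally valid once patched as above, and has the minor advantage of not invoking Hilbert~90.
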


\begin{proof}
Let $j \in \left\{ 0, \ldots, n-1 \right\}$. Since $\tau(a_i) = a_i$
for all $i$, here  (1)  becomes
\begin{equation} \label{eqn:Aut(S_f) cyclic, Coeffiecients in F, cyclic field extension case 1}
 a_i = \Big( \prod_{q=i}^{m-1} \sigma^q(k) \Big) a_i
\end{equation}
for all $i \in \left\{ 0, \ldots, m-1 \right\}$.
\\ (i) Clearly, (\ref{eqn:Aut(S_f) cyclic, Coeffiecients in F, cyclic field
extension case 1}) is satisfied for $k=1$ and all $i\in \left\{ 0,
\ldots, m-1 \right\}$, therefore the maps $H_{\tau,1}$ are
automorphisms of $S_f$ for all $\tau \in {\rm Gal}(K/F)$ by Theorems
\ref{thm:automorphism_of_Sf_field_case} and \ref{thm:automorphism_of_Sf_field_caseII}. We have
$H_{\sigma^j,1} \circ H_{\sigma^l,1} =H_{\sigma^{j+l},1}$
and  $ H_{\sigma^n,1}=H_{id,1} $. Hence $\langle H_{\sigma,1} \rangle = \left\{
H_{id,1}, H_{\sigma,1}, \ldots, H_{\sigma^{m-1}, 1} \right\}$ is a
cyclic subgroup of order $n$.
\\ (ii) By Theorem \ref{thm:automorphism_of_Sf_field_case}, the automorphisms
of $S_f$ are exactly the maps $H_{\sigma^j,k}$ where $j \in \left\{
0, \ldots, n-1 \right\}$ and $k \in K^{\times}$ satisfies
(\ref{eqn:Aut(S_f) cyclic, Coeffiecients in F, cyclic field extension
case 1}) for all $i \in \left\{ 0, \ldots, m-1 \right\}$. The maps
$H_{\sigma^j,1}$ are therefore automorphisms of $S_f$ for all $j \in
\left\{ 0, \ldots, n-1 \right\}$. We prove that these are the only
automorphisms of $S_f$: $a_0 \neq 0$ and so $N_{K/F}(k) = 1$ by
\eqref{eqn:Aut(S_f) cyclic, Coeffiecients in F, cyclic field
extension case 1}. Therefore, by Hilbert 90, there exists $\alpha \in
K$ such that $k = \sigma(\alpha)/\alpha$. Let $l \in \left\{1, \ldots, m-1 \right\}$ be such that $a_l \neq 0$. Then by
\eqref{eqn:Aut(S_f) cyclic, Coeffiecients in F, cyclic field extension case 1}, $$1 = \prod_{q=l}^{m-1} \sigma^q(k) =
\prod_{q=l}^{m-1} \sigma^q \big( \frac{\sigma(\alpha)}{\alpha} \big)= \frac{\prod_{q=l+1}^{m} \sigma^q(\alpha)}{\prod_{q=l}^{m-1}
\sigma^q(\alpha)} = \frac{\alpha}{\sigma^l(\alpha)}.$$ Thus $\alpha
\in \text{Fix}(\sigma^j) = F$ since $m$ is prime. Therefore
$k =\sigma(\alpha)/\alpha = \alpha/\alpha = 1$
as required.
\end{proof}

This complements our results from Theorem \ref{thm:automorphism_of_Sf_field_caseIV}, which
in case ${\rm Gal}(K/F)$ is cyclic of degree $n$ mean the following:

\begin{corollary}
Suppose ${\rm Gal}(K/F)$ is cyclic of degree $n$ and
$f(t) = t^m - \sum_{i=0}^{m-1} a_i t^i \in F[t;\sigma]$   not invariant with $a_{m-1}\in F^\times$.
\\ (i) Let  $n\geq m-1$ then for all
$\tau\in {\rm Gal}(K/F) $ the maps  $H_{\tau , 1}$  are exactly the automorphisms of $S_f$ and
${\rm Aut}_F(S_f)\cong {\rm Gal}(K/F)\cong \mathbb{Z}/n \mathbb{Z}.$
 \\ (ii) Let  $n < m-1$ then for all
$\tau\in {\rm Gal}(K/F) $ the maps  $H_{\tau , 1}$  are automorphisms of $S_f$ and
 ${\rm Gal}(K/F)\cong \mathbb{Z}/n \mathbb{Z}$ is isomorphic to a subgroup of  ${\rm Aut}_F(S_f)$.
 \end{corollary}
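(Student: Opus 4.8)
The plan is to derive both statements directly from Theorem \ref{thm:automorphism_of_Sf_field_caseIV}, whose hypotheses I must first verify are met. The crucial observation is that a cyclic group is abelian: since ${\rm Gal}(K/F) = \langle \sigma\rangle$ is cyclic of order $n$, every $\tau \in {\rm Gal}(K/F)$ is a power of $\sigma$ and hence commutes with $\sigma$. This is precisely the standing hypothesis that $\sigma$ commute with all $\tau \in {\rm Gal}(K/F)$ required by Theorem \ref{thm:automorphism_of_Sf_field_caseIV}, and by assumption $f \in F[t;\sigma]$ is not invariant with $a_{m-1} \in F^\times$, so that theorem applies verbatim. Moreover, being cyclic of order $n$, ${\rm Gal}(K/F) \cong \mathbb{Z}/n\mathbb{Z}$, which accounts for the extra isomorphism appearing in the conclusion.

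For part (i), where $n \geq m-1$, I would invoke the second clause of Theorem \ref{thm:automorphism_of_Sf_field_caseIV}(i): since $f(t) \in F[t;\sigma]$, every automorphism of $S_f$ is of the form $H_{\tau,1}$ with $\tau \in {\rm Gal}(K/F)$, and conversely each such $H_{\tau,1}$ is an automorphism, giving ${\rm Aut}_F(S_f) \cong {\rm Gal}(K/F) \cong \mathbb{Z}/n\mathbb{Z}$. For part (ii), where $n < m-1$, I would invoke Theorem \ref{thm:automorphism_of_Sf_field_caseIV}(ii) to conclude that the maps $H_{\tau,1}$ are automorphisms of $S_f$ for all $\tau \in {\rm Gal}(K/F)$, and that ${\rm Gal}(K/F) \cong \mathbb{Z}/n\mathbb{Z}$ embeds as a subgroup of ${\rm Aut}_F(S_f)$.

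The only point meriting a line of justification is that $\tau \mapsto H_{\tau,1}$ is an injective group homomorphism, so that the image really is a subgroup isomorphic to ${\rm Gal}(K/F)$; this follows from the composition law $H_{\tau,k}\circ H_{\rho,b} = H_{\tau\rho,\tau(b)k}$ recorded in the proof of Theorem \ref{thm:automorphism_of_Sf_field_caseII}, which specializes to $H_{\tau,1}\circ H_{\rho,1} = H_{\tau\rho,1}$ since $\tau(1)=1$, and shows in particular that $\langle H_{\sigma,1}\rangle$ is cyclic of order $n$. I anticipate essentially no obstacle here: the entire content is contained in Theorem \ref{thm:automorphism_of_Sf_field_caseIV}, and the corollary merely specializes it to the cyclic case and records the resulting isomorphism type $\mathbb{Z}/n\mathbb{Z}$.
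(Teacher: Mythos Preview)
Your proposal is correct and follows exactly the paper's approach: the Corollary is stated immediately after the sentence ``This complements our results from Theorem \ref{thm:automorphism_of_Sf_field_caseIV}, which in case ${\rm Gal}(K/F)$ is cyclic of degree $n$ mean the following,'' and no further proof is given. Your observation that cyclicity of ${\rm Gal}(K/F)$ guarantees the commutativity hypothesis of Theorem \ref{thm:automorphism_of_Sf_field_caseIV}, together with the identification ${\rm Gal}(K/F)\cong\mathbb{Z}/n\mathbb{Z}$, is precisely the intended argument.
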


%
%

\section{Nonassociative  cyclic algebras} \label{sec:nonasscyclic}

\subsection{} Let $K/F$ be a cyclic Galois extension of degree $m$ with ${\rm
Gal}(K/F)=\langle\sigma\rangle$ and  $f(t)=t^m-a\in K[t;\sigma]$.
Then $(K/F,\sigma,a)=K[t;\sigma]/K[t;\sigma](t^m-a)$
 is called a \emph{nonassociative cyclic algebra of degree $m$} over $F$. It is not associative for all $a\in K\setminus F$
 and a cyclic associative central simple algebra over $F$ for $a\in F^\times$.
 We will only consider the case that $a\in K\setminus F$. If  $1,a,a^2, \ldots, a^{m-1}$ are linearly
independent over $F$ then $(K/F, \sigma, a)$ is a division algebra (cf. \cite{S12},  \cite{San62} for finite $F$). In particular, if
$K/F$ is of prime degree then $(K/F, \sigma, a)$ is a division algebra for every  $a \in K\setminus F$.

\begin{theorem} \label{thm:cyclic_subgroupsII}
Let $A=(K/F, \sigma, a)$ be a nonassociative cyclic algebra of degree
$m$.
\\ (i) All the automorphisms of $A$ which extend $id_K$ are inner automorphisms
and of the form $H_{id,l}$ for all $l\in K^\times$ such that
$N_{K/F}(l)=1$.
\\ The subgroup they generate in ${\rm Aut}_F(A)$ is isomorphic to ${\rm ker}(N_{K/F})$.
\\ (ii) An automorphism $\sigma^j\not=id$ can be extended to  $H\in {\rm Aut}_F(A)$, if and only if there is some
$l\in K$ such that
 $\sigma^j(a)= N_{K/F}(l)a.$
  In that case, $H=H_{\sigma^j,l}$ and if $m$ is prime then $N_{K/F}(l)=\omega$ for an $m$th root of unity
  $1\not=\omega\in F$.
\\ (iii)
 Let $c \in K \setminus F$ and suppose there exists $r \in\mathbb{N}$ such that $c^r \in F^\times$.
Let $r$ be minimal. Then $\langle G_c \rangle$ is a cyclic subgroup of ${\rm
Aut}_F(S_f)$ of order $r$.
\end{theorem}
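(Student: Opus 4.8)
The plan is to obtain all three parts by specialising the general automorphism description to $f(t)=t^m-a$, where the only nonzero coefficient is $a_0=a$ and $n=m\geq m-1$, and then reading off the group-theoretic consequences.

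For (i), I would start from Corollary \ref{cor:Sandler} (i) applied with $\tau=id$: the automorphisms of $A$ fixing $K$ are exactly the maps $H_{id,k}$ for which $\prod_{l=0}^{m-1}\sigma^l(k)=N_{K/F}(k)=1$, i.e. $k\in{\rm ker}(N_{K/F})$. Theorem \ref{thm:inner_general} (i) then tells me each such $H_{id,k}$ is inner. To identify the subgroup they generate I would use the composition rule $H_{id,k}\circ H_{id,b}=H_{id,kb}$ recorded in the proof of Theorem \ref{thm:automorphism_of_Sf_field_caseII}, so that $l\mapsto H_{id,l}$ is a homomorphism from ${\rm ker}(N_{K/F})$ into ${\rm Aut}_F(A)$; comparing the coefficient of $t$ in the defining formula of $H_{id,l}$ shows $H_{id,l}=id$ forces $l=1$, hence this homomorphism is injective and its image is $\cong{\rm ker}(N_{K/F})$.

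For (ii), Theorem \ref{thm:automorphism_of_Sf_field_case} shows that an extension of $\sigma^j$ must have the form $H_{\sigma^j,l}$ with $l$ satisfying (\ref{eqn:neccessary}); since $a_0=a$ is the only nonzero coefficient, this collapses to the single equation $\sigma^j(a)=N_{K/F}(l)a$, which is exactly the stated criterion. For the prime case I would apply $N_{K/F}$ to both sides: the field norm is $\sigma$-invariant, so $N_{K/F}(\sigma^j(a))=N_{K/F}(a)$, while $N_{K/F}(l)\in F$ comes out as $N_{K/F}(l)^m$, giving $N_{K/F}(l)^m=1$; thus $\omega:=N_{K/F}(l)$ is an $m$th root of unity in $F$. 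To see $\omega\neq1$, note that $\omega=1$ would give $\sigma^j(a)=a$, but for prime $m$ and $\sigma^j\neq id$ the map $\sigma^j$ generates ${\rm Gal}(K/F)$, so ${\rm Fix}(\sigma^j)=F$ and $a\in F$, contradicting $a\in K\setminus F$.

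For (iii), the strategy is to realise $G_c$ among the maps $H_{id,\bullet}$ and track its order. By Lemma \ref{le:inner_general}, $G_c=H_{id,c^{-1}\sigma(c)}$, which is a genuine automorphism since $N_{K/F}(c^{-1}\sigma(c))=1$. Using the composition rule I get $G_c^{\,j}=H_{id,(c^{-1}\sigma(c))^j}$, and since $(c^{-1}\sigma(c))^j=(c^j)^{-1}\sigma(c^j)$ this equals $G_{c^j}$ by Lemma \ref{le:inner_general}. Comparing the coefficient of $t$ shows $H_{id,k'}=id$ iff $k'=1$, so $G_c^{\,j}=id$ iff $(c^j)^{-1}\sigma(c^j)=1$, i.e. iff $c^j\in{\rm Fix}(\sigma)=F$ (equivalently $c^j\in F^\times$, which one may also phrase as $c^j\in C(A)=F$ via Proposition \ref{prop:inner_Wene}). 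The least such $j$ is $r$ by the minimality hypothesis, so $\langle G_c\rangle$ is cyclic of order $r$.

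The routine assembling of earlier theorems does most of the work; the genuinely calculational point, and the one I would be most careful with, is the power computation in (iii)---proving $G_c^{\,j}=G_{c^j}$ cleanly via the telescoping product and then pinning down that $G_{c^j}=id$ is equivalent to $c^j\in F$---since this is what converts the arithmetic minimality of $r$ into the order of the cyclic subgroup.
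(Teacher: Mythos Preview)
Your proposal is correct and follows essentially the same route as the paper: parts (i) and (ii) are obtained by specialising the general classification (Theorem~\ref{thm:automorphism_of_Sf_field_case}/Corollary~\ref{cor:Sandler}, Theorem~\ref{thm:inner_general}, Proposition~\ref{prop:conditionsonk}) to $f(t)=t^m-a$, and part (iii) reduces to showing $G_c^{\,j}=G_{c^j}$ and $G_{c^j}=id\iff c^j\in F$. The only minor difference is that in (iii) the paper argues directly from $K={\rm Nuc}(A)$ via Proposition~\ref{prop:inner_Wene}, whereas you route through the identification $G_c=H_{id,c^{-1}\sigma(c)}$ of Lemma~\ref{le:inner_general}; both give the same telescoping computation, and your added detail for the injectivity in (i) and the $\omega\neq1$ argument in (ii) is welcome since the paper leaves these implicit.
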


\begin{proof}
Theorem \ref{thm:automorphism_of_Sf_field_case},  Theorem
\ref{thm:inner_general} (ii),  and Proposition \ref{prop:conditionsonk}
imply (i) and (ii).
\\ (iii) Let $c \in K \setminus F$. Then $G_c$ is an automorphism, because $K$ is the nucleus of $A$.
 Since
$G_c \circ G_c = G_{c^2}$, $G_c \circ G_c \circ G_c= G_{c^3}$ and so
on, we have $G_{c^r} = id$ if and only if $c^r \in F$. If $r \in
\mathbb{N}$ is smallest possible
 then $\langle G_c\rangle$ is a cyclic subgroup of $ {\rm Aut}_F(S_f)$ of order $r$.
\end{proof}

Note that different roots of unity yield different $l$ in Theorem \ref{thm:cyclic_subgroupsII} (ii). This yields:

\begin{theorem} \label{thm:aut1}
Let  $A=(K/F,\sigma,a)$ be a nonassociative cyclic algebra of degree
$m$.
Suppose  $F$ contains a non-trivial $m$th root of unity $\omega$.
\\ (i) $\langle H_{id,\omega}\rangle$
is a cyclic subgroup of ${\rm Aut}_F(A)$ of order at most $m$.
 If $\omega$ is a primitive $m$th root of unity, then $\langle H_{id,\omega}\rangle$
has order $m$.
\\ (ii) If there is an element $l\in K$, such that $N_{K/F}(l)=\omega$ for  $\omega$  a primitive $m$th root of unity and
$\sigma(d)= \omega d$, then the subgroup generated by $H_{\sigma,l}$
has order $m^2$.
\end{theorem}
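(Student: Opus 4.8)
The plan is to exploit the composition rule $H_{\tau,k}\circ H_{\rho,b}=H_{\tau\rho,\tau(b)k}$ recorded in the proof of Theorem \ref{thm:automorphism_of_Sf_field_caseII}, together with the observation that for $f(t)=t^m-a$ the automorphism condition (\ref{eqn:neccessary}) collapses to the single equation $\tau(a)=N_{K/F}(k)\,a$.

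For (i), I would first note that since $\omega\in F$ and $[K:F]=m$, we have $N_{K/F}(\omega)=\omega^m=1$, so $H_{id,\omega}$ satisfies the automorphism condition with $\tau=id$ and is an automorphism of $A$ by Proposition \ref{prop:automorphism_of_Sf_field_general}(i). The composition rule then gives $H_{id,\omega}^{\,j}=H_{id,\omega^j}$. Because $\omega^j\in F$ is fixed by $\sigma$, the map $H_{id,\omega^j}$ sends $\sum_i x_i t^i$ to $\sum_i x_i\omega^{ij}t^i$; comparing the coefficient of $t$ shows $H_{id,\omega^j}=id$ precisely when $\omega^j=1$. Hence $\langle H_{id,\omega}\rangle$ is cyclic of order equal to the multiplicative order of $\omega$, which divides $m$ and equals $m$ exactly when $\omega$ is primitive.

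For (ii), I would take the hypothesis to mean $\sigma(a)=\omega a$ (i.e. $d=a$), so that $\sigma(a)=N_{K/F}(l)\,a=\omega a$ holds and $H_{\sigma,l}$ is an automorphism of $A$ by Theorem \ref{thm:cyclic_subgroupsII}(ii). Iterating the composition rule by induction should yield $H_{\sigma,l}^{\,j}=H_{\sigma^j,\,k_j}$ with $k_j=\prod_{i=0}^{j-1}\sigma^i(l)$. Evaluating at $j=m$, the first component becomes $\sigma^m=id$ and the second becomes $k_m=\prod_{i=0}^{m-1}\sigma^i(l)=N_{K/F}(l)=\omega$, so that $H_{\sigma,l}^{\,m}=H_{id,\omega}$.

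The order of $H_{\sigma,l}$ is then pinned down using (i): any $j$ with $H_{\sigma,l}^{\,j}=id$ forces $\sigma^j=id$, hence $m\mid j$, and writing $j=ms$ gives $H_{\sigma,l}^{\,j}=(H_{\sigma,l}^{\,m})^s=H_{id,\omega}^{\,s}$, which by (i) is the identity precisely when $m\mid s$; thus the order is exactly $m^2$. The step requiring the most care will be this last one: one must verify not merely that $H_{\sigma,l}^{\,m^2}=id$ but that no smaller power is trivial, and the decisive input is that $H_{\sigma,l}^{\,m}=H_{id,\omega}$ is \emph{not} the identity but an element of order exactly $m$ --- which is precisely what part (i) furnishes for primitive $\omega$.
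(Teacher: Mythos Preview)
Your proposal is correct and follows essentially the same route as the paper's proof: for (i) both arguments reduce to the observation that $H_{id,\omega}^{\,j}=H_{id,\omega^j}$ and hence the order equals the multiplicative order of $\omega$ (the paper packages this as a citation of Theorem \ref{thm:cyclic_subgrp}); for (ii) both compute $H_{\sigma,l}^{\,m}=H_{id,\omega}$ and then invoke (i) to conclude order $m^2$. Your version is in fact somewhat more careful than the paper's, since you explicitly argue that any $j$ with $H_{\sigma,l}^{\,j}=id$ must be divisible by $m$ (by restriction to $K$) and then by $m^2$, whereas the paper simply asserts the order is $m^2$ from $H_{id,\omega}$ having order $m$; your interpretation $d=a$ is also the intended one.
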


\begin{proof}
 (i) follows from Theorem \ref{thm:cyclic_subgrp}.
\\ (ii) Suppose $\sigma$ can be extended to an $F$-automorphism $H$ of $A$. Then by
Theorem \ref{thm:cyclic_subgroupsII}, there is an element $l\in K$,
such that $N_{K/F}(l)=\omega$, $\omega\not=1$ and $\sigma(d)= \omega
d$, and $H=H_{\sigma,l}$.
(If $1=N_{K/F}(l)$, then $\sigma(d)= d$, contradiction.)

The subgroup generated by $H=H_{\sigma,l}$ has order greater than $m$, since
$H_{\sigma,l}\circ \cdots\circ H_{\sigma,l}$ ($m$-times) becomes
$H_{\sigma^{m},b}=H_{id,\omega}$ with $\omega=N_{K/F}(l)$. $H_{id,\omega}$ has order $m$, so
 the subgroup generated by $H=H_{\sigma,l}$ has order $m^2$.
\end{proof}

\subsection{The case that $m$ is prime}\label{subsec:nasc}

 Let us now assume that the cyclic field extension  $K/F$ has prime degree $m={\rm deg} (f)$.
Suppose that $F$ contains a primitive $m$th root of unity, where $m$ is prime to the characteristic of $F$.
Then $K = F(d)$, where $d$ is a root of an irreducible polynomial $t^m - c \in F[t]$.

\begin{lemma}\label{egnval} (cf. \cite[Lemma 6.2.7]{AndrewPhD})
 The eigenvalues of $\sigma^j\in {\rm Gal}(K/F)$ are precisely the $m$th roots of unity. Moreover, the only
possible eigenvectors are of the form $ed^i$ for
some $i$, $0 \leq i \leq m-1$ and some $e\in F$.
\end{lemma}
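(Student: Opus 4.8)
The plan is to diagonalize each $\sigma^j$ explicitly on the power basis of $K=F(d)$. First I would pin down the action of a generator on $d$. Since $d^m=c\in F^\times$, every $F$-automorphism of $K$ sends $d$ to another root of $t^m-c$, and these roots are exactly the elements $\omega^s d$ with $0\leq s\leq m-1$, where $\omega\in F$ is the fixed primitive $m$th root of unity. Writing $\sigma(d)=\omega^s d$, the fact that $\sigma$ generates ${\rm Gal}(K/F)$ and $K=F(d)$ forces $\omega^s$ to be a primitive $m$th root of unity; after relabelling the generator I may therefore assume $\sigma(d)=\omega d$.

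Next, $\{1,d,d^2,\ldots,d^{m-1}\}$ is an $F$-basis of $K$, and from $\sigma(d)=\omega d$ I obtain $\sigma^j(d^i)=\omega^{ij}d^i$ for all $i,j$. Thus each basis vector $d^i$ is an eigenvector of $\sigma^j$ with eigenvalue $\omega^{ij}$. For $\sigma^j\neq id$, i.e. $1\leq j\leq m-1$, the primality of $m$ gives $\gcd(j,m)=1$, so as $i$ runs through $0,1,\ldots,m-1$ the residues $ij \bmod m$ run through every residue class modulo $m$; consequently the eigenvalues $\omega^{ij}$ are precisely the $m$ distinct $m$th roots of unity. This already establishes the first assertion.

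Finally, because these $m$ eigenvalues are pairwise distinct, the $m$ eigenvectors $d^0,\ldots,d^{m-1}$ form a basis of $K$ consisting of eigenvectors, so $\sigma^j$ is diagonalizable over $F$ and each of its eigenspaces is one-dimensional, spanned by a single $d^i$. Hence every eigenvector of $\sigma^j$ is an $F$-multiple $ed^i$ of one of the $d^i$ with $e\in F^\times$, which is the second assertion.

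The only delicate point is the normalization $\sigma(d)=\omega d$, i.e. the Kummer-theoretic identification of the generator and the verification that $\omega^s$ is primitive; once that is in place the rest is a direct computation with the power basis, so I do not expect a genuine obstacle here.
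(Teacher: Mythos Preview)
Your argument is correct. The paper does not supply a proof of this lemma; it merely cites \cite[Lemma 6.2.7]{AndrewPhD}, so there is no in-paper proof to compare against. Your approach---diagonalizing $\sigma^j$ on the power basis $\{1,d,\ldots,d^{m-1}\}$ via the Kummer relation $\sigma(d)=\zeta d$ for a primitive $m$th root of unity $\zeta$, then using primality of $m$ to see that $\{ij \bmod m : 0\le i\le m-1\}$ exhausts all residues when $j\not\equiv 0$---is the standard and expected one, and is almost certainly what the cited reference does as well.

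One small remark on phrasing: when you write ``after relabelling the generator I may therefore assume $\sigma(d)=\omega d$'', note that $\sigma$ is fixed throughout the paper (it is the twist in $K[t;\sigma]$), so you cannot relabel $\sigma$. What you can legitimately do is rename the primitive root: since $\sigma(d)=\omega^s d$ with $\gcd(s,m)=1$, set $\zeta=\omega^s$ and work with $\zeta$ in place of $\omega$. Alternatively, simply carry the exponent $s$ through the computation; since $m$ is prime and $1\le j\le m-1$, the product $sj$ is still coprime to $m$, so $\{sij \bmod m\}$ again hits every residue. Either way the conclusion stands.
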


Let $f(t) = t^m - a \in K[t; \sigma], \quad a\not\in F.$
Then we get the following strong restriction for automorphisms of $S_f$:

\begin{theorem}  \label{thm:automorphism_of_Sf_field_caseIII}
  $H$ is an
automorphism of $S_f$ extending $\sigma^j\not=id$ if and only if
$H=H_{\sigma^j , k}$ for some $k \in K^{\times}$,
 where
$N_{K/F}(k)$
 is an $m$th root of unity and
$a=ed^s$ for some $e\in F^\times$ and some $d^{s}$.
\end{theorem}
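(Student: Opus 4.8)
The plan is to combine the general automorphism classification in Theorem~\ref{thm:automorphism_of_Sf_field_case} (valid here since $n=m\geq m-1$ as $K/F$ has degree $m$) with the spectral information in Lemma~\ref{egnval}. Since $f(t)=t^m-a$ has only the single nonzero lower coefficient $a_0=a$, condition \eqref{eqn:neccessary} collapses to the one equation
$$\sigma^j(a)=\Big(\prod_{l=0}^{m-1}\sigma^l(k)\Big)a = N_{K/F}(k)\,a,$$
because $\sigma$ generates $\mathrm{Gal}(K/F)$ of order $m$, so the product over $l=0,\ldots,m-1$ is exactly the norm. By Corollary~\ref{cor:Sandler}(i), $H$ extends $\sigma^j$ iff $H=H_{\sigma^j,k}$ with $k$ satisfying this single relation, and then $N_{K/F}(k)$ is automatically an $m$th root of unity. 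So the first direction (existence of such $H$ forces the norm condition) is essentially Corollary~\ref{cor:Sandler} restated.

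The substantive new content is the claim that $a$ must have the special shape $a=ed^s$ with $e\in F^\times$. First I would rewrite the defining equation as $\sigma^j(a)=\omega a$, where $\omega:=N_{K/F}(k)$ is an $m$th root of unity lying in $F$ (it lies in $F$ because it is a norm). This exhibits $a$ as an \emph{eigenvector} of the $F$-linear map $\sigma^j$ on $K$ with eigenvalue $\omega$. Now Lemma~\ref{egnval} applies directly: the eigenvectors of $\sigma^j$ are precisely the elements of the form $ed^i$ with $e\in F$ and $0\leq i\leq m-1$. Since $a\neq 0$ we have $e\in F^\times$, giving $a=ed^s$ for some $s$, which is exactly the asserted form.

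For the converse I would run the construction backwards: given that $a=ed^s$ with $e\in F^\times$ and that $N_{K/F}(k)$ is an $m$th root of unity $\omega$ with $\sigma^j(a)=\omega a$, the element $k$ satisfies the single relation \eqref{eqn:neccessary} for $f(t)=t^m-a$, so $H_{\sigma^j,k}$ is a genuine automorphism of $S_f$ by Theorem~\ref{thm:automorphism_of_Sf_field_case} (or Proposition~\ref{prop:automorphism_of_Sf_field_general}(ii), noting $\sigma^j$ commutes with $\sigma$). One should check that such a $k$ actually exists, i.e.\ that the norm map hits the required root of unity; this is where the hypotheses that $F$ contains a primitive $m$th root of unity and that $m$ is prime are used, together with the observation that $d^s$ is itself an eigenvector so that $\sigma^j(d^s)=\zeta d^s$ for an appropriate $m$th root of unity $\zeta$, forcing $\omega=\zeta$.

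The main obstacle I anticipate is the bookkeeping linking the eigenvalue $\omega$ of $\sigma^j$ acting on $a$ to the surjectivity of $N_{K/F}$ onto the group $\mu_m(F)$ of $m$th roots of unity, so that a compatible $k$ genuinely exists and the stated equivalence is two-sided rather than merely necessary. Because $K/F$ is cyclic of degree $m$ and $F$ contains a primitive $m$th root of unity, the norm map $N_{K/F}\colon K^\times\to F^\times$ has image containing $\mu_m(F)$ (indeed $N_{K/F}(d)=\pm c$ up to a root of unity, and the eigenvector structure of Lemma~\ref{egnval} pins down which root of unity arises), so the matching $k$ can be produced; carefully identifying $\zeta$ with $\omega$ via $\sigma^j(d^s)=\omega d^s$ is the one genuinely delicate point, and everything else reduces to Corollary~\ref{cor:Sandler} and Lemma~\ref{egnval}.
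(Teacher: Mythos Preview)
Your approach is essentially the paper's: reduce via Theorem~\ref{thm:automorphism_of_Sf_field_case}/Corollary~\ref{cor:Sandler} to the single eigenvalue equation $\sigma^j(a)=N_{K/F}(k)\,a$, then invoke Lemma~\ref{egnval} to force $a=ed^s$ and $N_{K/F}(k)\in\mu_m(F)$. The paper's own proof is in fact shorter than yours---it simply asserts that, by Lemma~\ref{egnval}, the condition $\sigma^j(a)=N_{K/F}(k)a$ is equivalent to $N_{K/F}(k)$ being an $m$th root of unity and $a=ed^s$, and stops there.

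Your additional worry about the converse (whether a $k$ with the required norm value actually exists) is legitimate and goes beyond what the paper addresses; the paper's statement and proof are somewhat loose on this point, since the converse really needs $N_{K/F}(k)$ to equal the \emph{specific} root of unity $\sigma^j(d^s)/d^s$, not merely some $m$th root of unity. So your instinct that there is bookkeeping left to do is correct, but the paper does not do it either---it treats Lemma~\ref{egnval} as giving the equivalence outright.
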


\begin{proof}
$H$ is an automorphism of $S_f$ if and only if $H=H_{\sigma^j ,k}$ where $j \in \left\{ 0, \ldots, m-1 \right\}$ and $k \in
K^{\times}$ is such that
$ \sigma^j(a) = \Big(\prod_{l=0}^{m-1}\sigma^l(k) \Big) a=N_{K/F}(k)a .$
 For all $\sigma^j\not=id$, by Lemma \ref{egnval} this condition is equivalent to
$N_{K/F}(k)$ being an $m$th root of unity and $a=ed^{s}$ for some $d^{s}$
and $e\in F^\times$, for all  $k\in K^{\times}$.
\end{proof}

Applying Theorem \ref{Aut(S_f) subgroup corollary}, our results for the automorphisms of a
 nonassociative cyclic algebra $A=(K/F,\sigma,a)$ of degree $m$
  yield the following observations for more general algebras $S_g$:

\begin{corollary}
Suppose ${\rm Gal}(K/F)=\langle\sigma\rangle$ is cyclic of degree $m$ and
$g(t) = t^m - \sum_{i=0}^{m-1} a_i t^i \in K[t;\sigma]$  is not invariant with
$a_{0}\in K\setminus F$.
Suppose one of the following holds:
\begin{itemize}
\item $F$ has no $m$th root of unity.
\item $m$ is prime and $F$ contains a primitive $m$th root of unity, where $m$ is
prime to the characteristic of $F$. Let $K = F(d)$ as in  Section
\ref{subsec:nasc} and $a_0\not=e d^i$, $e\in F^\times$.
\end{itemize}
Then every $F$-automorphism of $S_g$ leaves $K$ fixed, is inner and
${\rm Aut}_F(S_g)$ is a subgroup of ${\rm ker}(N_{K/F})$, thus cyclic.
In particular, if ${\rm ker}(N_{K/F})$ has prime order, then either
${\rm Aut}_F(S_g)$ is trivial or ${\rm Aut}_F(S_g)\cong {\rm ker}(N_{K/F})$.
\end{corollary}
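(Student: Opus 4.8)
The plan is to combine the earlier structural results to pin down all $F$-automorphisms of $S_g$. First I would recall that $g$ is not invariant, so by Theorem \ref{Properties of S_f petit}(i) the algebra $S_g$ has $\mathrm{Nuc}_l(S_g)=K$, and since any automorphism preserves the left nucleus, every $H\in\mathrm{Aut}_F(S_g)$ restricts to some $\tau\in\mathrm{Gal}(K/F)$ on $K$. Because $\mathrm{Gal}(K/F)=\langle\sigma\rangle$ and $\sigma$ trivially commutes with all its powers, the hypothesis of Theorem \ref{thm:automorphism_of_Sf_field_case} (namely $\sigma$ commutes with all $\tau\in\mathrm{Gal}(K/F)$) is automatic, so every such $H$ has the canonical form $H_{\tau,k}$ with the necessary condition (\ref{eqn:neccessary}) holding for the coefficients of $g$.

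\textbf{Reducing to $\tau=\mathrm{id}$.} The crux is to show that no nontrivial $\tau=\sigma^j$ can occur. Since $a_0\in K\setminus F$ and $g$ has degree $m=n$ with $\sigma$ generating the Galois group, applying the norm to the $i=0$ instance of (\ref{eqn:neccessary}) gives $N_{K/F}(\sigma^j(a_0))=N_{K/F}(k)^m\,N_{K/F}(a_0)$; as the norm is $\sigma$-invariant this forces $N_{K/F}(k)^m=1$, so $N_{K/F}(k)$ is an $m$th root of unity, exactly as in Corollary \ref{cor:Sandler}(i). Now I split on the two bulleted hypotheses. If $F$ contains no $m$th root of unity, then $N_{K/F}(k)=1$. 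If instead $m$ is prime and $F$ has a primitive $m$th root of unity with $K=F(d)$ as in Section \ref{subsec:nasc} and $a_0\neq ed^i$ for all $e\in F^\times$, then the $i=0$ relation reads $\sigma^j(a_0)=N_{K/F}(k)\,a_0$, which makes $a_0$ an eigenvector of $\sigma^j$. By Lemma \ref{egnval} the only eigenvectors of a nontrivial $\sigma^j$ are of the form $ed^i$; since $a_0$ is excluded from this shape, no $\sigma^j\neq\mathrm{id}$ can extend, so again we are forced to $\tau=\mathrm{id}$. In either case every automorphism of $S_g$ fixes $K$.

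\textbf{Identifying the automorphisms as inner.} Having shown $H=H_{\mathrm{id},k}$, I invoke Proposition \ref{prop:conditionsonk}(v): since $a_0\neq 0$ and $n=m$ with $\sigma$ generating the group, the $i=0$ case of (\ref{eqn:neccessary}) becomes $1=\prod_{l=0}^{m-1}\sigma^l(k)=N_{K/F}(k)$, so $k\in\ker(N_{K/F})$. By Hilbert 90 one has $k=c^{-1}\sigma(c)$ for some $c\in K^\times$, and Lemma \ref{le:inner_general} identifies $H_{\mathrm{id},k}=G_c$ as an inner automorphism. Thus every $F$-automorphism of $S_g$ is inner and leaves $K$ fixed. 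Sending $H_{\mathrm{id},k}\mapsto k$ embeds $\mathrm{Aut}_F(S_g)$ into $\ker(N_{K/F})$ as a subgroup (using the composition rule $H_{\mathrm{id},k}\circ H_{\mathrm{id},k'}=H_{\mathrm{id},kk'}$ from the proof of Theorem \ref{thm:automorphism_of_Sf_field_caseII}(i)), which is abelian; since $\ker(N_{K/F})$ is cyclic, so is every subgroup, giving the cyclic conclusion.

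\textbf{The prime-order case.} Finally, if $\ker(N_{K/F})$ has prime order, then its only subgroups are the trivial one and the whole group, so $\mathrm{Aut}_F(S_g)$ is either trivial or isomorphic to $\ker(N_{K/F})$. The main obstacle I anticipate is the case split in the second step: ruling out nontrivial $\tau$ under each of the two bullet hypotheses is where all the content lives, with the eigenvector analysis via Lemma \ref{egnval} being the more delicate of the two; the rest is bookkeeping with (\ref{eqn:neccessary}), Hilbert 90, and the inner-automorphism dictionary already established.
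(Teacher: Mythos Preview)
Your approach is essentially the paper's: the paper reduces to the nonassociative cyclic algebra $S_f$ with $f(t)=t^m-a_0$ via Theorem \ref{Aut(S_f) subgroup corollary}, and then invokes Theorems \ref{thm:cyclic_subgroupsII} and \ref{thm:automorphism_of_Sf_field_caseIII}. Your direct use of the $i=0$ instance of (\ref{eqn:neccessary}) is exactly that reduction unpacked, so there is no substantive difference in strategy.

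There is, however, a genuine gap in your treatment of the first bullet. You correctly deduce $N_{K/F}(k)=1$, but then jump to ``In either case every automorphism of $S_g$ fixes $K$'' without ever explaining why $N_{K/F}(k)=1$ forces $\tau=\mathrm{id}$. The missing step is: $N_{K/F}(k)=1$ and the $i=0$ relation give $\sigma^j(a_0)=a_0$, so $a_0\in\mathrm{Fix}(\sigma^j)$. To conclude $j=0$ from $a_0\in K\setminus F$ you need $\mathrm{Fix}(\sigma^j)=F$ for every $j\neq 0$, which holds precisely because $m$ is prime. Note that this corollary sits in Section \ref{subsec:nasc}, whose standing hypothesis is that $m$ is prime; you never invoke this in the first bullet, and without it the argument fails (if $m$ is composite, $a_0$ could lie in a proper intermediate field and the conclusion is false). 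So you must use primality in both bullets, not just the second.

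A smaller point: you assert ``since $\ker(N_{K/F})$ is cyclic, so is every subgroup'' without justification. This is not automatic for arbitrary cyclic extensions (e.g., for $K/\!F=\mathbb{C}/\mathbb{R}$ the kernel is the circle group); the paper's ``thus cyclic'' is equally bare, so this is a shared loose end rather than a defect peculiar to your argument, but you should be aware that something extra (such as $K$ finite, the setting motivating the paper) is needed here.
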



\subsection{The automorphism groups of nonassociative quaternion algebras}


Recall  the \emph{dicyclic group}
\begin{equation} \label{eqn:dicyclic group presentation}
{\rm Dic}_l = \langle x, y \ \vert \ y^{2l}=1, \ x^2 = y^l, \ x^{-1}yx = y^{-1} \rangle
\end{equation}
 of order $4l$.
The semidirect product $\mathbb{Z} / s \mathbb{Z} \rtimes_l \mathbb{Z} / n \mathbb{Z}$ between the cyclic groups
$\mathbb{Z} / s \mathbb{Z}$ and
$\mathbb{Z} / n \mathbb{Z}$ corresponds
to a choice of an integer $l$ such that $l^n \equiv 1 \ {\rm  mod} \ s$. It can be described by the presentation
$\mathbb{Z} / s \mathbb{Z} \rtimes_l \mathbb{Z} / n \mathbb{Z} =
 \langle x,y \ \vert \ x^s = 1, \ y^n= 1, \ yxy^{-1} = x^l  \rangle.$

We obtain the following result for the automorphism groups of
 nonassociative quaternion algebras (where $m=2$):

\begin{theorem} \label{thm:semidirect and dicyclic m=2}
Suppose $K = F(\sqrt{b})$ is a quadratic field extension of $F$,  $ char(F) \neq 2$, and consider the nonassociative
quaternion algebra $A=(K/F, \sigma, \lambda \sqrt{b})$ for some $\lambda \in F^{\times}$.
 Suppose there exists $k \in K^{\times}$ such that $k \sigma(k) = -1$.

 For every $c \in K \setminus F$ for which there is
a positive integer $j$ such that $c^j \in F^{\times}$, pick the smallest such $j$.
\\ (i) If $j$ is even
then ${\rm Aut}_F(S_f)$ contains the dicyclic group of order $2j$.
\\(ii)
 If $j$ is odd then ${\rm Aut}_F(S_f)$ contains a subgroup isomorphic to the semidirect product
$\mathbb{Z} / j \mathbb{Z} \rtimes_{j-1} \mathbb{Z} / 4 \mathbb{Z}.$
 In particular, ${\rm Aut}_F(A)$ always contains a subgroup isomorphic to
$\mathbb{Z}/4\mathbb{Z}$.
\end{theorem}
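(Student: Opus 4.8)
The plan is to write down two explicit automorphisms and identify the group they generate, using the $m=2$ description of ${\rm Aut}_F(S_f)$ in Proposition \ref{prop:automorphism_of_Sf_field_general}(iii) together with the composition rule $H_{\tau,k}\circ H_{\rho,b}=H_{\tau\rho,\tau(b)k}$ recorded in the proof of Theorem \ref{thm:automorphism_of_Sf_field_caseII}. Here $f(t)=t^2-a$ with $a=\lambda\sqrt{b}$, so $a_0=a$, $a_1=0$, and since $K/F$ is quadratic, $\sigma$ commutes with all of ${\rm Gal}(K/F)=\{id,\sigma\}$ and $\sigma(a)=-a$. The first automorphism is $H:=H_{\sigma,k}$: the hypothesis $N_{K/F}(k)=k\sigma(k)=-1$ makes the conditions of Proposition \ref{prop:automorphism_of_Sf_field_general}(iii) hold, and the composition rule gives $H^2=H_{id,N_{K/F}(k)}=H_{id,-1}$ and $H^4=H_{id,1}=id$, so (as ${\rm char}\,F\neq2$ forces $H^2\neq id$) $H$ has order $4$. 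The second is $G_c$: by Lemma \ref{le:inner_general}, $G_c=H_{id,\ell}$ with $\ell=c^{-1}\sigma(c)$, and Theorem \ref{thm:cyclic_subgroupsII}(iii) shows $\langle G_c\rangle$ is cyclic of order $j$. Finally, since $N_{K/F}(\ell)=1$ gives $\sigma(\ell)=\ell^{-1}$, passing this through the composition rule (using that $K$ is commutative) yields the conjugation relation $HG_cH^{-1}=H_{id,\sigma(\ell)}=H_{id,\ell^{-1}}=G_c^{-1}$; in particular $H$ normalises $\langle G_c\rangle$.

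For part (i), with $j$ even, I would take $y=G_c$ and $x=H$ and check the dicyclic relations $y^{j}=id$, $x^{-1}yx=y^{-1}$ and $x^2=y^{j/2}$. The only delicate point is $x^2=y^{j/2}$, i.e. $H_{id,-1}=H_{id,\ell^{j/2}}$, equivalently $\ell^{j/2}=-1$. Minimality of $j$ forces $w:=c^{j/2}\notin F$ while $w^2=c^{j}\in F^\times$, so inside $K=F(\sqrt{b})$ necessarily $w=e\sqrt{b}$ for some $e\in F^\times$; hence $\sigma(w)=-w$ and $\ell^{j/2}=w^{-1}\sigma(w)=-1$. Once the relations hold, note $H\notin\langle G_c\rangle$ (it restricts to $\sigma\neq id$) while $H^2=G_c^{j/2}\in\langle G_c\rangle$ and $H$ normalises $\langle G_c\rangle$, so $[\langle G_c,H\rangle:\langle G_c\rangle]=2$ and $|\langle G_c,H\rangle|=2j$. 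Thus $\langle G_c,H\rangle$ is a quotient of ${\rm Dic}_{j/2}$ of the same order $2j$, hence isomorphic to it.

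For part (ii), with $j$ odd, I would take the same $G_c$ and $H$ but now form a semidirect product. As $\langle G_c\rangle$ has odd order it contains no involution, so $H^2=H_{id,-1}$ (of order $2$) lies outside $\langle G_c\rangle$; since the only elements of $\langle H\rangle$ restricting to $id_K$ are $id$ and $H^2$, this yields $\langle G_c\rangle\cap\langle H\rangle=\{id\}$. With $\langle G_c\rangle$ normalised by $H$, the group $\langle G_c,H\rangle$ is the internal semidirect product $\langle G_c\rangle\rtimes\langle H\rangle\cong\mathbb{Z}/j\mathbb{Z}\rtimes\mathbb{Z}/4\mathbb{Z}$ of order $4j$, in which the generator $H$ of $\mathbb{Z}/4\mathbb{Z}$ acts by $G_c\mapsto G_c^{-1}=G_c^{j-1}$; this is exactly $\mathbb{Z}/j\mathbb{Z}\rtimes_{j-1}\mathbb{Z}/4\mathbb{Z}$. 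The final claim is immediate, since $\langle H\rangle\cong\mathbb{Z}/4\mathbb{Z}$ is a subgroup of ${\rm Aut}_F(A)$ using only the standing hypothesis on $k$, independently of $j$.

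The crux is the identity $\ell^{j/2}=-1$ in part (i): the rest is bookkeeping with the composition rule, but this step is what pins $x^2$ to the correct power $y^{j/2}$ and so forces the dicyclic (rather than a larger or split) structure. It relies essentially on the fact that the elements of $K$ whose square lies in $F$ but which are not themselves in $F$ are exactly the $F^\times$-multiples of $\sqrt{b}$, so the quadratic hypothesis on $K/F$ cannot be dispensed with in this step.
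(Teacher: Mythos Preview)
Your proof is correct and follows essentially the same strategy as the paper: both build the group from $H_{\sigma,k}$ and $G_c$, establish the conjugation relation $H_{\sigma,k}G_cH_{\sigma,k}^{-1}=G_c^{-1}$, and for part (i) identify $c^{j/2}$ as an $F^\times$-multiple of $\sqrt{b}$ to obtain $G_c^{j/2}=H_{id,-1}=H_{\sigma,k}^2$. The differences are cosmetic: you route everything through the composition rule $H_{\tau,k}\circ H_{\rho,b}=H_{\tau\rho,\tau(b)k}$, whereas the paper verifies the conjugation identity by a direct calculation on $x_0+x_1t$; and for the ``in particular'' clause you simply invoke $\langle H_{\sigma,k}\rangle\cong\mathbb{Z}/4\mathbb{Z}$, while the paper instead specialises part (i) to $c=\sqrt{b}$ (so $j=2$) and notes ${\rm Dic}_1\cong\mathbb{Z}/4\mathbb{Z}$. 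If anything, your argument in (i) is slightly more careful: you explicitly check $|\langle G_c,H\rangle|=2j$ before concluding the group is dicyclic, whereas the paper stops once the defining relations are verified.
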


\begin{proof}
Since $\sigma(\sqrt{b}) = -\sqrt{b}$  and $k \sigma(k) = -1$, $H_{\sigma,k} \in {\rm Aut}_F(S_f)$
by Theorem \ref{thm:cyclic_subgroupsII}.
 A simple calculation shows that
$\langle H_{\sigma,k}\rangle = \{ H_{\sigma,k}, H_{id,-1}, H_{\sigma,-k}, H_{id, 1} \}.$
 $\langle G_c \rangle$ is a cyclic subgroup of ${\rm Aut}_F(S_f)$ of order $j$
 by Theorem \ref{thm:cyclic_subgroupsII} (iii).
\\ (i) Suppose $j$ is even and write $j = 2l$. We prove first that $G_{c^l} = H_{id, -1}$. Write
$c^l = \mu_0 + \mu_1 \sqrt{b}$ for some $\mu_0, \mu_1 \in F$. Then
$c^j = c^{2l} = \mu_0^2 + \mu_1^2 b + 2 \mu_0 \mu_1 \sqrt{b} \in F$
which implies $2 \mu_0 \mu_1 = 0$. Hence $\mu_0 = 0$ or $\mu_1 = 0$. Since $j$ is minimal,
$c^l \notin F$ so $\mu_0 = 0$ and $c^l = \mu_1 \sqrt{b}$. We obtain
\begin{align*}
G_{c^l}(x_0 + x_1t) &= x_0 + x_1 (\mu_1 \sqrt{b})^{-1} \sigma(\mu_1 \sqrt{b})t \\
&= x_0 + x_1 \mu_1^{-1} b^{-1} \sqrt{b}(- \mu_1 \sqrt{b})t \\
&= x_0 - x_1t = H_{id, -1}(x_0 + x_1t)
\end{align*}
which implies $G_{c^l} = H_{id, -1}$. Next we prove $(H_{\sigma,k})^{-1} G_c H_{\sigma,k} = G_c^{-1}$. Simple calculations show $(H_{\sigma,k})^{-1} = H_{\sigma,-k}$ and $G_c^{-1} = G_{\sigma(c)}$. We have
\begin{align*}
H_{\sigma,-k} \big( G_c \big( H_{\sigma,k} \big( x_0 + x_1t \big) \big) \big) &= H_{\sigma,-k} \big( G_c \big( \sigma(x_0) + \sigma(x_1)kt \big) \big) \\
&= H_{\sigma,-k} \big( \sigma(x_0) + \sigma(x_1)k c^{-1}\sigma(c) t \big) \\
&= x_0 - x_1 \sigma(k) \sigma(c^{-1})ckt \\
&= x_0 + x_1 \sigma(c^{-1})c t = G_{\sigma(c)}\big( x_0 + x_1t \big)
\end{align*}
and so $(H_{\sigma,k})^{-1} G_c H_{\sigma,k} = G_c^{-1}$.

Thus $H_{\sigma,k}^2 = H_{id, -1} = G_{c^l}=G_c^l, \ G_c^{2l} = id$ and
$(H_{\sigma,k})^{-1} G_c H_{\sigma,k} = G_c^{-1}$. Hence $\langle H_{\sigma,k}, G_c \rangle$ has the presentation
 \eqref{eqn:dicyclic group presentation} as required.
\\ (ii) Suppose $j$ is odd. Then $\langle G_c \rangle$ does not contain $H_{id, -1}$ as $H_{id, -1}$ has order $2$
 which implies $\langle H_{\sigma,k}\rangle \cap \langle G_c\rangle = \{ id \}$. Furthermore $(H_{\sigma,k})^{-1} G_c H_{\sigma,k} = G_c^{-1}
 = G_c^{j-1} = G_{c^{j-1}}$ can be shown similarly as in (i). Note that
$(j-1)^4 = j^4 - 4j^3 + 6j^2 - 4j + 1 \equiv 1 \text{ mod }(j).$
Thus ${\rm Aut}_F(S_f)$ contains the subgroup
$\langle G_c\rangle \rtimes_{j-1} \langle H_{\sigma,k}\rangle \cong \mathbb{Z} / j \mathbb{Z} \rtimes_{j-1}
 \mathbb{Z} / 4 \mathbb{Z}$
as required.
\\
In particular, choose $c = \sqrt{b}$ in (i), so that $j=2$. This implies ${\rm Aut}_F(A)$
contains the dicyclic group of order $4$,
which is the cyclic group of order $4$.
\end{proof}

\begin{example}
 (i) Let $F = \mathbb{Q}(i)$, $K = F(\sqrt{-3})$, $\sigma(\sqrt{-3})=-\sqrt{-3}$ and
 $A=(K/F,\sigma, \lambda \sqrt{-3})$ be a nonassociative quaternion algebra with
 some $\lambda \in F^{\times}$. Note that for $k=i$ we have $i \sigma(i)  = -1$.
 Let $c = 1 + \sqrt{-3}$. Then
$c^2 = -2+2\sqrt{-3} \text{ and } c^3 = -8$
which implies $j=3$ here.
Therefore ${\rm Aut}_F(S_f)$ contains a subgroup isomorphic to the semidirect product
$\mathbb{Z} / 3\mathbb{Z} \rtimes_2 \mathbb{Z} / 4 \mathbb{Z}$
by Theorem \ref{thm:semidirect and dicyclic m=2}.
\\ (ii) Let $F = \mathbb{Q}(i)$, $K = F(\sqrt{-1/12})$,
$\sigma(\sqrt{-1/12})=-\sqrt{-1/12}$ and $A=(K/F,\sigma, \lambda \sqrt{-1/12})$ be a nonassociative quaternion algebra
 for some
$\lambda \in F^{\times}$. Again for $k=i$ we have $i \sigma(i)  = -1$. Let $c = 1 + 2 \sqrt{-1/12}$. Then
$c^2 = 2/3 + 2i/\sqrt{3},$ $ c^3 = 8i/3\sqrt{3},$ $c^4 = -8/9 + 8i/3 \sqrt{3},$
 $ c^5 = -16/9 + 16i/9\sqrt{3}$ and $c^6 = -64/27.$
Hence $c, c^2, c^3, c^4, c^5 \in K \setminus F$ and $c^6 \in F$. Therefore ${\rm Aut}_F(A)$ contains the dicyclic
group of order $12$ by Theorem \ref{thm:semidirect and dicyclic m=2}.

\end{example}

%
%

\section{Isomorphisms between $S_f$ and $S_g$} \label{sec:iso}

The proofs of the previous sections can be adapted to check when two Petit algebras are isomorphic and when not.
This is not the main focus of this paper so we just point out how some of the results can be transferred.

If $K$ and $L$ are fields,   and $S_f=K[t;\sigma]/K[t;\sigma]f(t)\cong
L[t;\sigma']/L[t;\sigma']g(t)=S_g,$
 then $K\cong L,$ $ Nuc_r(S_f)\cong Nuc_r(S_g),$ $ {\rm deg}(f)={\rm deg}(g),$ and
${\rm Fix}(\sigma)\cong {\rm Fix}(\sigma'),$
since isomorphic algebras have the same dimensions, and isomorphic nuclei and center.

If $G$
is an automorphism of $R =K[t;\sigma]$ which restricts to an automorphism $\tau$ on $K$ which commutes with
$\sigma$, $f \in R$ is irreducible and $g(t) = G(f(t))$, then $G$ induces an isomorphism $S_f \cong
S_{G(g)}$ \cite[Theorem 7]{LS} (the proof works for any base field).

From now on let $F$ be the fixed field of $\sigma$,
$\sigma$ have
order $n$, and both
 $f(t) = t^m - \sum_{i=0}^{m-1} a_i t^i $ and $ g(t) =t^m - \sum_{i=0}^{m-1} b_i t^i \in K[t;\sigma]$
 be not invariant.
Then the following is proved analogously
to Theorem \ref{thm:automorphism_of_Sf_field_case}, Theorem
\ref{thm:automorphism_of_Sf_field_caseII} and Proposition \ref{prop:automorphism_of_Sf_field_general}:

\begin{theorem} \label{general_isomorphism_theorem}
Suppose $\sigma$ commutes with all $\tau\in {\rm Aut}_F(K)$ and $n \geq m-1$. Then $S_f \cong S_g$ if and only if there exists
$\tau\in {\rm Aut}_F(K)$ and $k \in K^{\times}$ such that
\begin{equation} \label{equ:necII}
\tau(a_i) = \Big( \prod_{l=i}^{m-1} \sigma^l(k) \Big) b_i
\end{equation}
 for
all $i \in\{ 0, \ldots, m-1\}$. Every such $\tau$ and $k$ yield a
unique isomorphism $G_{\tau,k}: S_f \rightarrow S_g$,
 $$G_{\tau,k}( \sum_{i=0}^{m-1}x_i t^i )= \tau(x_0) + \sum_{i=1}^{m-1}
\tau(x_i) \prod_{l=0}^{i-1}\sigma^l(k)  t^i.$$
\end{theorem}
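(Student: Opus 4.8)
The plan is to adapt almost verbatim the argument of Theorem \ref{thm:automorphism_of_Sf_field_case}, replacing ``automorphism of $S_f$'' by ``isomorphism $S_f\to S_g$''. Let $G\colon S_f\to S_g$ be an isomorphism. Since neither algebra is associative, Theorem \ref{Properties of S_f petit}~(i) gives $\mathrm{Nuc}_l(S_f)=\mathrm{Nuc}_l(S_g)=K$, and any isomorphism preserves the left nucleus, so $G(K)=K$ and $G\vert_K=\tau$ for some $\tau\in\mathrm{Aut}_F(K)$. Writing $G(t)=\sum_{i=0}^{m-1}k_i t^i$ in $S_g$ and comparing the two expressions for $G(tz)$ coming from $G(t)G(z)$ and $G(\sigma(z)t)$ — exactly as in \eqref{eqn:automorphism_necessity_theorem 1}--\eqref{eqn:automorphism_necessity_theorem 4} — I obtain $k_i=0$ or $\sigma^i(z)=\sigma(z)$ for all $z\in K$. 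Here the hypothesis $n\geq m-1$ forces $\sigma^i\neq\sigma$ for $1\neq i\in\{0,\dots,m-1\}$, hence $k_i=0$ for all such $i$ and $G(t)=kt$ for some $k\in K^\times$. This fixes the shape of $G$ to be $G_{\tau,k}$ as displayed.

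Next I would pin down the relation \eqref{equ:necII}. Since $G$ is a homomorphism, $G(t^m)=G(t)^m$. On one side, using $t^m=\sum_{i=0}^{m-1}a_it^i$ in $S_f$ and $G\vert_K=\tau$, $G(t)=kt$, I compute $G(t^m)=\sum_{i=0}^{m-1}\tau(a_i)\bigl(\prod_{l=0}^{i-1}\sigma^l(k)\bigr)t^i$ as in \eqref{eqn:automorphism_necessity_theorem 8}. On the other side, $G(t)^m=(kt)^m=\bigl(\prod_{l=0}^{m-1}\sigma^l(k)\bigr)t^m$, and now I must reduce $t^m$ using the relation in $S_g$, namely $t^m=\sum_{i=0}^{m-1}b_it^i$, giving $\bigl(\prod_{l=0}^{m-1}\sigma^l(k)\bigr)\sum_{i=0}^{m-1}b_it^i$, as in \eqref{eqn:automorphism_necessity_theorem 9}. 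Comparing coefficients of $t^i$ yields $\tau(a_i)=\bigl(\prod_{l=i}^{m-1}\sigma^l(k)\bigr)b_i$ for each $i$, which is precisely \eqref{equ:necII}. The key point distinguishing the isomorphism case from the automorphism case is that the $a_i$ and $b_i$ now come from two different relations, so the product telescopes against $b_i$ rather than $a_i$.

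For the converse, I would invoke the transfer-of-structure result already cited in the proof of Theorem \ref{thm:automorphism_of_Sf_field_case}: any $F$-automorphism $G$ of $R=K[t;\sigma]$ restricting to a $\tau$ commuting with $\sigma$ has the form $G(h)=\tau(h_0)+\sum\tau(h_i)\prod_{l=0}^{i-1}\sigma^l(k)\,t^i$, and induces an isomorphism $S_f\cong S_{G(f)}$ (the proof of \cite[Theorem 7]{LS} works for any base field). So given $\tau,k$ satisfying \eqref{equ:necII}, I compute $G(f(t))$ in $R$ and check that it equals $\bigl(\prod_{l=0}^{m-1}\sigma^l(k)\bigr)g(t)=cg(t)$ for some $c\in K^\times$; condition \eqref{equ:necII} is exactly what makes the coefficients of $G(f)$ agree with $c\,b_i$. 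Since $S_{cg}=S_g$, the induced map is an isomorphism $S_f\to S_g$, and it visibly coincides with the formula for $G_{\tau,k}$.

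The main obstacle, such as it is, is bookkeeping rather than anything conceptual: one must be careful that the reduction $t^m\to\sum b_i t^i$ is carried out in $S_g$ (the target) while the expansion $t^m\to\sum a_i t^i$ is done in $S_f$ (the source), and that the index ranges on the telescoping products $\prod_{l=i}^{m-1}\sigma^l(k)$ are correct. Uniqueness of $G_{\tau,k}$ for given $\tau,k$ is immediate since its action is determined on $K$ and on $t$, which generate $S_f$ as an $F$-algebra. I therefore expect no essential difficulty beyond faithfully re-running the automorphism argument with the second relation in place.
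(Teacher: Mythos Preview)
Your proposal is correct and follows exactly the approach the paper intends: the paper's own proof simply states that the result ``is proved analogously to Theorem~\ref{thm:automorphism_of_Sf_field_case}, Theorem~\ref{thm:automorphism_of_Sf_field_caseII} and Proposition~\ref{prop:automorphism_of_Sf_field_general}'', and you have spelled out precisely that adaptation, including the crucial observation that the reduction of $t^m$ on the target side uses the coefficients $b_i$ of $g$ rather than the $a_i$ of $f$.
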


If $n<m-1$ we still get a partial result:

\begin{theorem} \label{general_isomorphism_theoremII}
Suppose there exists $\tau \in {\rm Aut}_F(K)$ and $k \in K^{\times}$
such that $\tau\circ\sigma= \sigma\circ\tau$ and such that (\ref{equ:necII}) holds
for all $i \in \{ 0, \ldots, m-1 \}$. Then $S_f \cong S_g$ with an isomorphism
given by
$$G_{\tau,k}( \sum_{i=0}^{m-1}x_i t^i )=\tau(x_0) + \sum_{i=1}^{m-1}
\tau(x_i) \prod_{l=0}^{i-1}\sigma^l(k) t^i$$
 as in Theorem \ref{general_isomorphism_theorem}.
\end{theorem}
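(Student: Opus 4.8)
The plan is to show that the stated map $G_{\tau,k}$ is the restriction to $R_m=\{h\in R\,|\,\deg h<m\}$ of an $F$-algebra automorphism of the whole ring $R=K[t;\sigma]$, and then to invoke the fact (already used in the proof of Theorem~\ref{thm:automorphism_of_Sf_field_case}, following \cite[Theorem~7]{LS}) that an automorphism $G$ of $R$ induces an isomorphism $S_f\cong S_{G(f)}$. Concretely, first I would define $G\colon R\to R$ by $G|_K=\tau$ and $G(t)=kt$, extended to be a ring homomorphism. The only thing to check is that $G$ respects the defining relation $ta=\sigma(a)t$: one computes $G(ta)=\tau(\sigma(a))kt$ and $G(t)G(a)=k\sigma(\tau(a))t$, and these agree precisely because $\tau$ and $\sigma$ commute, which is exactly the hypothesis. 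Since $\tau\in{\rm Aut}_F(K)$ fixes $F$ and $k\in K^\times$, the map $G$ is $F$-linear and bijective, hence an $F$-algebra automorphism of $R$.

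The key computation is to evaluate $G$ on $f$. Using $(kt)^i=\big(\prod_{l=0}^{i-1}\sigma^l(k)\big)t^i$ and writing $c=\prod_{l=0}^{m-1}\sigma^l(k)\in K^\times$, I obtain
\[
G(f(t))=(kt)^m-\sum_{i=0}^{m-1}\tau(a_i)(kt)^i
= c\,t^m-\sum_{i=0}^{m-1}\tau(a_i)\Big(\prod_{l=0}^{i-1}\sigma^l(k)\Big)t^i .
\]
Now hypothesis (\ref{equ:necII}), namely $\tau(a_i)=\big(\prod_{l=i}^{m-1}\sigma^l(k)\big)b_i$, gives $\tau(a_i)\prod_{l=0}^{i-1}\sigma^l(k)=c\,b_i$ for each $i$, so the right-hand side collapses to $c\big(t^m-\sum_{i=0}^{m-1}b_it^i\big)=c\,g(t)$. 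Hence $G(f(t))=c\,g(t)$.

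It then remains to read off the isomorphism. Since $G$ is an automorphism of $R$, it carries the left ideal $Rf$ onto $RG(f)=R(cg)=Rg$, and therefore descends to an $F$-algebra isomorphism $S_f=R/Rf\to R/Rg=S_g$ (again by the argument of \cite[Theorem~7]{LS}, which works over any base field; note $S_{cg}=S_g$ because $S_h=S_{ah}$ for all $a\in K^\times$). Finally, for any $\sum_{i=0}^{m-1}x_it^i\in R_m$ the image $G(\sum_i x_it^i)=\tau(x_0)+\sum_{i=1}^{m-1}\tau(x_i)\prod_{l=0}^{i-1}\sigma^l(k)\,t^i$ already has degree $<m$, so it coincides with its own remainder modulo $g$; thus the induced isomorphism is exactly the map $G_{\tau,k}$ displayed in the statement.

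I expect no genuinely hard step here. Unlike Theorem~\ref{general_isomorphism_theorem}, which asserts an ``if and only if'' and hence needs $n\ge m-1$ to force an arbitrary isomorphism to send $t$ to $kt$, the present statement is only a sufficiency claim, so I never have to analyse the shape of a general isomorphism (which, as in Theorem~\ref{thm:automorphism_of_Sf_field_caseII}, could instead send $t$ to $k_1t+k_{1+n}t^{1+n}+\cdots$). The one point where care is needed is the well-definedness of $G$ on $R$, and that is precisely where the commutativity assumption $\tau\circ\sigma=\sigma\circ\tau$ is used.
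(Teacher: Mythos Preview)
Your proposal is correct and follows essentially the same route as the paper. The paper does not spell out a separate proof for this theorem but refers back to the sufficiency part of the proof of Theorem~\ref{thm:automorphism_of_Sf_field_case} (and Theorem~\ref{thm:automorphism_of_Sf_field_caseII}, Proposition~\ref{prop:automorphism_of_Sf_field_general}), where the very same argument---lift $G_{\tau,k}$ to an automorphism $G$ of $R=K[t;\sigma]$ using $\tau\circ\sigma=\sigma\circ\tau$, compute $G(f)=c\,g$, and invoke \cite[Theorem~7]{LS} to conclude $S_f\cong S_{G(f)}=S_{cg}=S_g$---is carried out in the automorphism case $g=f$; your write-up simply makes the adaptation to $g\neq f$ explicit.
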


\begin{corollary} \label{prop:isomorphism_of_Sf_field_general}
  For every $k \in K^{\times}$ such that $a_i = \Big( \prod_{l=i}^{m-1} \sigma^l(k) \Big) b_i$ for all
 $i \in \{ 0, \ldots, m-1 \}$,  $G_{id,k}: S_f\rightarrow S_g$ is an isomorphism.
\end{corollary}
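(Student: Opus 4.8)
The plan is to derive Corollary \ref{prop:isomorphism_of_Sf_field_general} as the special case $\tau = id_K$ of Theorem \ref{general_isomorphism_theoremII}. First I would verify that the hypotheses of Theorem \ref{general_isomorphism_theoremII} are met with this choice. The theorem requires some $\tau \in {\rm Aut}_F(K)$ and $k \in K^\times$ satisfying two conditions: that $\tau \circ \sigma = \sigma \circ \tau$, and that equation \eqref{equ:necII} holds for all $i \in \{0,\ldots,m-1\}$. Taking $\tau = id_K$, the commutation condition $id_K \circ \sigma = \sigma \circ id_K$ is trivially satisfied, so the only genuine requirement is the displayed equation in the corollary's hypothesis, namely $a_i = \big( \prod_{l=i}^{m-1} \sigma^l(k) \big) b_i$ for all $i$.

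Second, I would observe that under $\tau = id_K$ the condition \eqref{equ:necII}, which reads $\tau(a_i) = \big( \prod_{l=i}^{m-1} \sigma^l(k) \big) b_i$, collapses to exactly $a_i = \big( \prod_{l=i}^{m-1} \sigma^l(k) \big) b_i$, since $id_K(a_i) = a_i$. This is precisely the hypothesis of the corollary, so \eqref{equ:necII} holds. Thus Theorem \ref{general_isomorphism_theoremII} applies and yields an isomorphism $S_f \cong S_g$ given by the formula $G_{\tau,k}(\sum_{i=0}^{m-1} x_i t^i) = \tau(x_0) + \sum_{i=1}^{m-1} \tau(x_i) \prod_{l=0}^{i-1} \sigma^l(k)\, t^i$. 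Specializing $\tau = id_K$ collapses this to $G_{id,k}(\sum_{i=0}^{m-1} x_i t^i) = x_0 + \sum_{i=1}^{m-1} x_i \prod_{l=0}^{i-1} \sigma^l(k)\, t^i$, which is the map named in the corollary.

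There is essentially no obstacle here: the corollary is a direct instantiation of the preceding theorem, and the proof amounts to checking that the commutation hypothesis is automatic for $\tau = id_K$ and that the two norm-type conditions coincide. The only point requiring a moment's care is to confirm that $id_K$ indeed lies in ${\rm Aut}_F(K)$ (it does, since the identity fixes $F$ pointwise and is an $F$-algebra automorphism of $K$) so that the quantifier ``there exists $\tau \in {\rm Aut}_F(K)$'' in Theorem \ref{general_isomorphism_theoremII} is met. I would therefore present this as a one- or two-line argument citing Theorem \ref{general_isomorphism_theoremII} with $\tau = id_K$.
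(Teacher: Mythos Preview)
Your proposal is correct and matches the paper's approach: the paper presents this corollary without proof immediately after Theorem \ref{general_isomorphism_theoremII}, having noted that the three results are proved analogously to Theorem \ref{thm:automorphism_of_Sf_field_case}, Theorem \ref{thm:automorphism_of_Sf_field_caseII}, and Proposition \ref{prop:automorphism_of_Sf_field_general}. Your observation that it is the $\tau = id_K$ instance of Theorem \ref{general_isomorphism_theoremII}, with the commutation hypothesis trivially satisfied, is exactly the intended justification.
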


As a direct consequence of Theorem \ref{general_isomorphism_theorem}
we obtain:

\begin{theorem} \label{thm:condition_f_g}
Suppose $\sigma$ commutes with all $\tau\in {\rm Aut}_F(K)$ and  $n \geq m-1$.
 If $S_f \cong S_g$, then $a_i=0$ if and only if $b_i=0$, for all $i \in\{ 0, \ldots, m-1\}$.
\end{theorem}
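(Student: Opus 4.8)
The plan is to invoke Theorem \ref{general_isomorphism_theorem} directly and extract the vanishing pattern from the characterizing equations \eqref{equ:necII}. Since $\sigma$ commutes with all $\tau \in {\rm Aut}_F(K)$ and $n \geq m-1$, the hypothesis of Theorem \ref{general_isomorphism_theorem} is satisfied, so $S_f \cong S_g$ furnishes a $\tau \in {\rm Aut}_F(K)$ and a $k \in K^\times$ with
\begin{equation} \label{eqn:cfg_main}
\tau(a_i) = \Big( \prod_{l=i}^{m-1} \sigma^l(k) \Big) b_i
\end{equation}
for all $i \in \{0, \ldots, m-1\}$.

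From here the argument is immediate. First I would fix an index $i$ and read off both implications from \eqref{eqn:cfg_main}. If $a_i = 0$, then the left-hand side $\tau(a_i) = \tau(0) = 0$, and since the product $\prod_{l=i}^{m-1} \sigma^l(k)$ is a product of nonzero elements of $K$ (each $\sigma^l(k) \in K^\times$ because $k \in K^\times$ and $\sigma$ is an automorphism), it is a unit; hence $b_i = 0$. Conversely, if $b_i = 0$ then the right-hand side vanishes, so $\tau(a_i) = 0$, and since $\tau$ is injective this forces $a_i = 0$. As $i$ was arbitrary, this gives $a_i = 0 \iff b_i = 0$ for all $i \in \{0, \ldots, m-1\}$.

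There is no real obstacle here; the statement is essentially a corollary unwinding the definition of the isomorphism criterion. The only point deserving a word is the appeal to $\tau$ being injective and to the product being invertible, both of which hold simply because $\tau$ is a field automorphism and each factor $\sigma^l(k)$ lies in $K^\times$. I would present the proof in two or three lines, citing Theorem \ref{general_isomorphism_theorem} for the existence of $\tau$ and $k$ and then extracting the biconditional coefficient-wise as above.
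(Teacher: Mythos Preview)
Your proof is correct and follows exactly the same approach as the paper's: invoke Theorem \ref{general_isomorphism_theorem} to obtain $\tau$ and $k$ satisfying \eqref{equ:necII}, then read off the biconditional coefficient-wise. The paper's version is terser (it simply states that \eqref{equ:necII} ``implies $a_i=0$ if and only if $b_i=0$''), whereas you spell out why the product is a unit and why injectivity of $\tau$ matters, but there is no substantive difference.
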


\begin{proof}
 If $S_f \cong S_g$ then by Theorem
\ref{general_isomorphism_theorem}, there exists $j \in \left\{
0,\ldots, n-1 \right\}$ and $k \in K^{\times}$ such that $\tau(a_i) =
\Big( \prod_{l=i}^{m-1} \sigma^l(k) \Big) b_i$
 for all $i \in\{ 0, \ldots, m-1\}$. This implies $a_i=0$ if and only if $b_i=0$, for all $i \in\{ 0, \ldots, m-1\}$.
\end{proof}

From now on we restrict ourselves to the situation that  $R=K[t;\sigma]$ and $F={\rm Fix}(\sigma)$,
 where $K/F$ is a finite Galois field extension and  $\sigma$ of order $n$.
We take a closer look at the consequences of Equality (\ref{equ:necII}):

\begin{proposition} \label{prop:conditionsonkII}
 Let  $k \in K^{\times}$  such that
$\tau(a_i) = \Big( \prod_{l=i}^{m-1}\sigma^l(k) \Big) b_i$ for all
$i \in\{ 0, \ldots, m-1\}$. Then $a_i=0$ if and only if $b_i=0$ and:
\\ (i) For all $i \in\{ 0, \ldots, m-1\}$ with $a_i\not=0$,
$N_{K/F}(a_i) =  N_{K/F}(k)^{m-i} N_{K/F}(b_i).$
\\ (ii) If there is some $i \in\{ 0, \ldots, m-1\}$ such that $a_i\in {\rm Fix}(\tau)^\times$, then
$a_i/b_i = \prod_{l=i}^{m-1}\sigma^l(k) .$
In particular, if $a_{m-1}\in F^\times$ and $b_{m-1}\in F^\times$, then
 $k\in F^\times$ and $a_i = k^{m-i} b_i$ for all $i \in\{ 0, \ldots, m-1\}$.
 \\ (iii) If  $a_0\in {\rm Fix}(\tau)^\times$, $m=n$ and ${\rm Gal}(K/F)=\langle\sigma\rangle$ then
 $a_0 =N_{K/F}(k)b_0.$
\end{proposition}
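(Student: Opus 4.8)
The plan is to mirror the proof of Proposition \ref{prop:conditionsonk}, with the pair $f,g$ in place of the single polynomial there, so that (\ref{equ:necII}) takes over the role played by (\ref{eqn:neccessary}). First I would record the equivalence $a_i=0 \iff b_i=0$: the factor $\prod_{l=i}^{m-1}\sigma^l(k)$ is a product of units of $K$, hence nonzero, and $\tau$ is a field automorphism, so $\tau(a_i)=0$ exactly when $a_i=0$; thus the two sides of (\ref{equ:necII}) vanish simultaneously.

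For part (i) I would apply the norm $N_{K/F}$ to both sides of (\ref{equ:necII}). Since $K/F$ is Galois and $\tau,\sigma\in{\rm Gal}(K/F)$, the norm is Galois-invariant, giving $N_{K/F}(\tau(a_i))=N_{K/F}(a_i)$ and $N_{K/F}(\sigma^l(k))=N_{K/F}(k)$ for every $l$. Multiplicativity of the norm then collapses the product $\prod_{l=i}^{m-1}\sigma^l(k)$, which has $m-i$ factors, into $N_{K/F}(k)^{m-i}$, so that $N_{K/F}(a_i)=N_{K/F}(k)^{m-i}N_{K/F}(b_i)$ for every $i$ with $a_i\neq 0$.

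For part (ii), if $a_i\in{\rm Fix}(\tau)^\times$ then $\tau(a_i)=a_i$, and since $b_i\neq 0$, (\ref{equ:necII}) rearranges to $a_i/b_i=\prod_{l=i}^{m-1}\sigma^l(k)$. For the ``in particular'' clause I would specialise to $i=m-1$: as $a_{m-1}\in F^\times\subseteq{\rm Fix}(\tau)$, this reads $\sigma^{m-1}(k)=a_{m-1}/b_{m-1}\in F^\times$; applying $\sigma$ and using $F={\rm Fix}(\sigma)$ gives $\sigma^m(k)=\sigma^{m-1}(k)$, hence $\sigma(k)=k$, so $k\in F^\times$. Once $k$ lies in $F$ each factor $\sigma^l(k)$ equals $k$, the product becomes $k^{m-i}$, and (\ref{equ:necII}) yields the asserted coefficient relation. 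Finally, part (iii) is the case $i=0$ of (ii) under $m=n$ and ${\rm Gal}(K/F)=\langle\sigma\rangle$: then $\prod_{l=0}^{m-1}\sigma^l(k)=\prod_{l=0}^{n-1}\sigma^l(k)=N_{K/F}(k)$, so $a_0/b_0=N_{K/F}(k)$.

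The norm computation in (i) and the bookkeeping of the $m-i$ factors are routine. The point I expect to require care is the passage $\sigma^{m-1}(k)\in F\Rightarrow k\in F$ in (ii), where I must use that $F$ is \emph{exactly} the fixed field of $\sigma$ rather than merely $\sigma$-stable; relatedly, once $k\in F$ the relation (\ref{equ:necII}) literally reads $\tau(a_i)=k^{m-i}b_i$, so matching it to $a_i=k^{m-i}b_i$ rests on the relevant coefficients being fixed by $\tau$ (which holds for $a_{m-1}$ by hypothesis and is the situation the clause is built around). This is the one spot where I would be careful to state precisely which coefficients $\tau$ fixes before concluding.
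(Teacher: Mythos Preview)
Your proposal is correct and follows essentially the same route as the paper: apply $N_{K/F}$ to (\ref{equ:necII}) for (i), specialise $\tau(a_i)=a_i$ for (ii), and read off the norm for (iii). Your deduction $\sigma^{m-1}(k)\in F\Rightarrow k\in F$ via $F={\rm Fix}(\sigma)$ is exactly what the paper intends when it says ``imply $k\in F^\times$''. You are in fact more careful than the paper about the ``in particular'' clause: the paper simply writes $a_i=k^{m-i}b_i$ (with a typo $k^{m-1-i}$), while you correctly flag that (\ref{equ:necII}) gives only $\tau(a_i)=k^{m-i}b_i$ once $k\in F$, so the stated equality without $\tau$ is to be read in the context where the relevant coefficients lie in ${\rm Fix}(\tau)$.
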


\begin{proof}
(i) Equality (\ref{equ:necII})
 implies that
$N_{K/F}(a_i) =  \prod_{l=i}^{m-1}N_{K/F}(\sigma^l(k))
N_{K/F}(b_i)$ (simply apply $N_{K/F}$ to both sides of (\ref{equ:necII})), therefore
$N_{K/F}(a_i) =  N_{K/F}(k)^{m-i} N_{K/F}(b_i)$ for all $i \in\{ 0,
\ldots, m-1\}$ is a necessary condition on $k$.
\\ (ii)  If there is an $i \in\{ 0, \ldots, m-1\}$ such that $a_i\in {\rm Fix}(\tau)^\times$, then
 (\ref{equ:necII})
 implies that $a_i = \Big( \prod_{l=i}^{m-1}\sigma^l(k) \Big) b_i$, so that we obtain
$a_i/b_i = \prod_{l=i}^{m-1}\sigma^l(k). $

Alternatively, if $a_{m-1}\in F^\times$ and  $b_{m-1}\in F^\times$, then $a_{m-1} = \sigma^{m-1}(k)b_{m-1} $
imply $k\in F^\times$, hence $a_i = k^{m-1-i} b_i$ for all
$i \in\{ 0, \ldots, m-1\}$.
\\ (iii) In particular, if  $a_0\in
{\rm Fix}(\tau)^\times$, $m=n$ and $\sigma$ generates ${\rm Gal}(K/F)$, then
$a_0/b_0 = \prod_{l=0}^{m-1}\sigma^l(k)=N_{K/F}(k)$
 is a necessary condition on $k$.
\end{proof}

\begin{corollary}
Suppose $\sigma$ commutes with all $\tau\in {\rm Gal}(K/F)$ and $n \geq m-1$.
Assume that one of the following holds:
\\ (i)  There exists  $i \in\{ 0, \ldots, m-1\}$ such that $b_i\not=0$ and
$N_{K/F}(a_i b_i^{-1}) \not\in  N_{K/F}(K^\times)^{m-i};$
\\ (ii) $m=n$, $a_0\in F^\times$ and $b_0\in K\setminus F$.
\\
Then $S_f \ncong S_g$.
\end{corollary}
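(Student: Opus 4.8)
The plan is to prove the contrapositive: assuming $S_f \cong S_g$, derive that neither condition (i) nor (ii) can hold. Since $n \geq m-1$ and $\sigma$ commutes with all $\tau \in {\rm Gal}(K/F)$, Theorem \ref{general_isomorphism_theorem} applies directly and tells us that an isomorphism exists only if there is some $\tau \in {\rm Gal}(K/F)$ and $k \in K^\times$ with
\begin{equation} \label{eqn:myplan_nec}
\tau(a_i) = \Big( \prod_{l=i}^{m-1} \sigma^l(k) \Big) b_i
\end{equation}
for all $i \in \{0, \ldots, m-1\}$. The whole proof is then a matter of feeding this equality into the norm map and reading off contradictions with (i) and (ii) respectively.

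For part (i), I would apply $N_{K/F}$ to both sides of \eqref{eqn:myplan_nec} for the particular index $i$ with $b_i \neq 0$. Since $N_{K/F}$ is multiplicative and $F$-valued, and since $N_{K/F}(\tau(a_i)) = N_{K/F}(a_i)$ because $\tau \in {\rm Gal}(K/F)$ permutes the conjugates, this is exactly the computation already packaged in Proposition \ref{prop:conditionsonkII}(i), giving $N_{K/F}(a_i) = N_{K/F}(k)^{m-i} N_{K/F}(b_i)$. Because $b_i \neq 0$ we may divide, obtaining $N_{K/F}(a_i b_i^{-1}) = N_{K/F}(k)^{m-i} = N_{K/F}(k^{m-i})$. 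This element lies in $N_{K/F}(K^\times)^{m-i}$, directly contradicting the assumption in (i). Hence no such $k$ exists and $S_f \ncong S_g$.

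For part (ii), the hypotheses are $m = n$, $a_0 \in F^\times$ and $b_0 \in K \setminus F$. Here I would use the $i=0$ instance of \eqref{eqn:myplan_nec}, namely $\tau(a_0) = \big(\prod_{l=0}^{m-1}\sigma^l(k)\big) b_0 = N_{K/F}(k)\, b_0$, where the product is precisely the norm because $m = n$ and $\sigma$ generates ${\rm Gal}(K/F)$; this is the content of Proposition \ref{prop:conditionsonkII}(iii). Since $a_0 \in F^\times \subseteq {\rm Fix}(\tau)$ we have $\tau(a_0) = a_0$, so $a_0 = N_{K/F}(k)\, b_0$. But $a_0 \in F$ and $N_{K/F}(k) \in F^\times$ force $b_0 = a_0 / N_{K/F}(k) \in F$, contradicting $b_0 \in K \setminus F$. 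Therefore $S_f \ncong S_g$ in this case as well.

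I do not anticipate a genuine obstacle here, since both parts are essentially immediate consequences of Proposition \ref{prop:conditionsonkII} once Theorem \ref{general_isomorphism_theorem} is invoked; the only point requiring a moment's care is the implicit use of $\tau(a_i)$ and $a_i$ having equal norm in part (i) (so that the factor $N_{K/F}(a_i)$ can be cancelled or rewritten cleanly) and, in part (ii), the identification of $\prod_{l=0}^{m-1}\sigma^l(k)$ with $N_{K/F}(k)$, which relies on $m=n$ together with $\sigma$ generating the Galois group. Both are already established earlier in the excerpt, so the argument reduces to citing the relevant parts of Proposition \ref{prop:conditionsonkII} and noting the resulting contradictions.
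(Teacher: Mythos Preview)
Your proposal is correct and follows exactly the route the paper intends: the paper simply remarks that the corollary follows from Proposition~\ref{prop:conditionsonkII}, and your argument spells out precisely how, invoking Theorem~\ref{general_isomorphism_theorem} to obtain the necessary relation and then reading off the contradictions from parts (i) and (iii) of that proposition.
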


\begin{corollary} \label{Isomorphism cyclic extension}
Suppose  $ {\rm Gal}(K/F)=\langle \sigma\rangle$ and $n =m$.
Let $f(t) = t^m - a,$ $g(t) = t^m - b \in K[t;\sigma]$ where $a, b\in K \setminus F$.
\\ (i) $S_f \cong S_g$ if and only if
there exists $\tau\in {\rm Gal}(K/F)$ and $k \in K^{\times}$ such that
$\tau(a) = N_{K/F}(k) b.$
 (ii) If $\sigma^j(a) \neq N_{K/F}(k)b$
  for all $k \in K^{\times}$, $j = 0, \ldots, m-1$, then $S_f \not\cong S_g$.
\end{corollary}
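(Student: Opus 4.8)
The plan is to derive both parts directly from the general isomorphism criterion in Theorem \ref{general_isomorphism_theorem}, so the first task is to verify that its hypotheses are met in the present setting. Since $\mathrm{Gal}(K/F) = \langle \sigma\rangle$ is cyclic, it is abelian, and because $K/F$ is Galois we have $\mathrm{Aut}_F(K) = \mathrm{Gal}(K/F)$; hence $\sigma$ automatically commutes with every $\tau \in \mathrm{Aut}_F(K)$. Moreover the standing assumption $n = m$ gives $n \geq m-1$, so Theorem \ref{general_isomorphism_theorem} is applicable to $S_f$ and $S_g$.

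Next I would specialize the criterion to the binomials $f(t) = t^m - a$ and $g(t) = t^m - b$. Writing these in the form $f(t) = t^m - \sum_{i=0}^{m-1} a_i t^i$ and $g(t) = t^m - \sum_{i=0}^{m-1} b_i t^i$, we have $a_0 = a$, $b_0 = b$ and $a_i = b_i = 0$ for $1 \leq i \leq m-1$. Theorem \ref{general_isomorphism_theorem} then states that $S_f \cong S_g$ if and only if there exist $\tau \in \mathrm{Gal}(K/F)$ and $k \in K^\times$ with $\tau(a_i) = \big(\prod_{l=i}^{m-1}\sigma^l(k)\big) b_i$ for all $i$. For $1 \leq i \leq m-1$ both sides vanish, so those equations hold trivially, and only the $i = 0$ relation $\tau(a) = \big(\prod_{l=0}^{m-1}\sigma^l(k)\big) b$ carries content.

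The only remaining point is to identify the product $\prod_{l=0}^{m-1}\sigma^l(k)$ with the norm. Since $\mathrm{Gal}(K/F) = \langle\sigma\rangle$ has order $n = m$, the automorphisms $\sigma^0, \sigma^1, \ldots, \sigma^{m-1}$ enumerate the Galois group exactly once, so $\prod_{l=0}^{m-1}\sigma^l(k) = N_{K/F}(k)$. The surviving condition thus becomes $\tau(a) = N_{K/F}(k) b$, which is precisely (i).

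Part (ii) is then the contrapositive of (i) made explicit: every $\tau \in \mathrm{Gal}(K/F) = \langle\sigma\rangle$ has the form $\sigma^j$ for some $j \in \{0, \ldots, m-1\}$, so the existence statement of (i) is equivalent to the existence of $j$ and $k$ with $\sigma^j(a) = N_{K/F}(k)b$. If $\sigma^j(a) \neq N_{K/F}(k)b$ for all such $j$ and all $k \in K^\times$, then no admissible pair $(\tau, k)$ exists and (i) forces $S_f \not\cong S_g$. I do not anticipate a genuine obstacle here: the entire content is packaged in Theorem \ref{general_isomorphism_theorem}, and the work reduces to observing that the higher coefficients of $f$ and $g$ vanish and that the relevant product of $\sigma$-conjugates is exactly the field norm.
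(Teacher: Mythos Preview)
Your argument is correct and follows essentially the same route as the paper: specialize Theorem \ref{general_isomorphism_theorem} to the binomials $f$ and $g$ so that only the $i=0$ equation survives, and then identify $\prod_{l=0}^{m-1}\sigma^l(k)$ with $N_{K/F}(k)$ using $n=m$ and $\mathrm{Gal}(K/F)=\langle\sigma\rangle$. Your explicit verification that $\sigma$ commutes with all of $\mathrm{Aut}_F(K)$ (via abelianness of the cyclic Galois group) and that $n\geq m-1$ is a welcome addition, since the paper merely cites the surrounding results without spelling this out.
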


These follow from
Proposition \ref{prop:conditionsonkII}.
Note that Corollary \ref{Isomorphism cyclic extension} canonically generalizes well-known criteria for associative
cyclic algebras.


\end{document}